\numberwithin{equation}{section}
\pgfplotsset{compat=newest} 
\pgfplotsset{plot coordinates/math parser=false}
\DeclareMathOperator{\tr}{tr}
\theoremstyle{definition}
\newtheorem{defn}{Definition}[section]
\newtheorem{prop}{Proposition}[section]
\newtheorem{lem}{Lemma}[section]
\newtheorem{thm}{Theorem}[section]
\newtheorem{cor}{Corollary}[section]
\newtheorem{rem}{Remark}[section]
\newtheorem{assu}{Assumption}[section]
\newtheorem{clawithinpf}{Claim}
\newcommand{\suchthat}{\;\ifnum\currentgrouptype=16 \middle\fi|\;}
\newcommand\blfootnote[1]{%
	\begingroup
	\renewcommand\thefootnote{}\footnote{#1}%
	\addtocounter{footnote}{-1}%
	\endgroup
}
\newcommand{\bb}[1]{\mathbb{#1}} 
\newcommand{\cc}[1]{\mathcal{#1}} 
\newcommand{\ff}[1]{\mathfrak{#1}} 
\newcommand{\zero}{\boldsymbol{0}}
\newcommand{\cov}{\mathop{\mathrm{cov}}} 
\newcommand{\var}{\mathop{\mathrm{var}}} 
\newcommand{\diam}{\mathop{\mathrm{diam}}} 
\newcommand{\dist}{\mathop{\mathrm{dist}}}
\newcommand{\RHS}{\mathop{\mathrm{RHS}}} 
\newcommand{\LHS}{\mathop{\mathrm{LHS}}}  
\newcommand{\argmax}{\operatornamewithlimits{\rm argmax}}
\newcommand{\brac}[1]{\left[#1\right]}
\newcommand{\set}[1]{\left\{#1\right\}}
\newcommand{\abs}[1]{\left\lvert #1 \right\rvert}
\newcommand{\paren}[1]{\left(#1\right)}
\newcommand{\brapar}[1]{\left[#1\right)}
\newcommand{\Bigparen}[1]{\Big(#1\Big)}
\newcommand{\Bigset}[1]{\Big\{#1\Big\}}
\newcommand{\InnerProd}[2]{\langle #1,#2 \rangle}
\newcommand{\cp}[1]{\overset{#1}{\rightarrow}}    % convergence in probability
\newcommand{\eqd}{\overset{d}{=}}  
\newcommand{\RelNum}[2]{\overset{#1}{#2}}
\newcommand{\LpNorm}[2]{\left\| #1\right\|_{\ell_{#2}}}
\newcommand{\SpNorm}[2]{\left\| #1\right\|_{\cc{S}_{#2}}}
\newcommand{\norm}[2]{\left\|#1\right\|_{#2}}
\newcommand{\OpNorm}[3]{\left\| #1\right\|_{#2\rightarrow#3}}
\newcommand{\RNum}[1]{\uppercase\expandafter{\romannumeral #1\relax}}
\pgfplotsset{every axis/.append style={
		scaled y ticks = false, 
		scaled x ticks = false, 
		y tick label style={/pgf/number format/.cd, fixed, fixed zerofill,
			,precision=1},
		x tick label style={/pgf/number format/.cd, fixed, fixed zerofill,
			precision=1}
	}
}
\newlength\figureheight 
\newlength\figurewidth
\journal{Journal of Multivariate Analysis}
\begin{document}

\begin{frontmatter}

%% Title, authors and addresses

%% use the tnoteref command within \title for footnotes;
%% use the tnotetext command for theassociated footnote;
%% use the fnref command within \author or \address for footnotes;
%% use the fntext command for theassociated footnote;
%% use the corref command within \author for corresponding author footnotes;
%% use the cortext command for theassociated footnote;
%% use the ead command for the email address,
%% and the form \ead[url] for the home page:
%% \title{Title\tnoteref{label1}}
%% \tnotetext[label1]{}
%% \author{Name\corref{cor1}\fnref{label2}}
%% \ead{email address}
%% \ead[url]{home page}
%% \fntext[label2]{}
%% \cortext[cor1]{}
%% \address{Address\fnref{label3}}
%% \fntext[label3]{}

\title{On the consistency of inversion-free parameter \\
estimation for Gaussian random fields \tnoteref{t1} \blfootnote{Email: \textit {\{hksh,clayscot,xuanlong\}@umich.edu}}}

%% use optional labels to link authors explicitly to addresses:
%% \author[label1,label2]{}
%% \address[label1]{}
%% \address[label2]{}

\author[1]{Hossein Keshavarz}
\author[2,1]{Clayton Scott}
\author[1,2]{XuanLong Nguyen}

\address[1]{Department of Statistics, University of Michigan}
\address[2]{Department of Electrical Engineering and Computer Science, University of Michigan}
\tnotetext[t1]{This research is partially supported by NSF grant ACI-1047871. Additionally, CS is partially supported by NSF grants 1422157, 1217880, and 0953135, and LN by NSF CAREER award DMS-1351362, NSF CNS-1409303, and NSF CCF-1115769.}

\begin{abstract}
Gaussian random fields are a powerful tool for modeling environmental processes. For high dimensional samples, classical approaches for estimating the covariance parameters require highly challenging and massive computations, such as the evaluation of the Cholesky factorization or solving linear systems. Recently, Anitescu, Chen and Stein \cite{M.Anitescu} proposed a fast and scalable algorithm which does not need such burdensome computations. The main focus of this article is to study the asymptotic behavior of the algorithm of Anitescu {\rm et al.} (ACS) for regular and irregular grids in the increasing domain setting. Consistency, minimax optimality and asymptotic normality of this algorithm are proved under mild differentiability conditions on the covariance function. Despite the fact that ACS's method entails a non-concave maximization, our results hold for any stationary point of the objective function. A numerical study is presented to evaluate the efficiency of this algorithm for large data sets.
\end{abstract}

\begin{keyword}
Inversion-free estimation \sep Covariance function \sep Stationary Gaussian process \sep Asymptotic analysis
%% keywords here, in the form: keyword \sep keyword

%% PACS codes here, in the form: \PACS code \sep code

%% MSC codes here, in the form: \MSC code \sep code
%% or \MSC[2008] code \sep code (2000 is the default)

\end{keyword}

\end{frontmatter}

%% \linenumbers

%% main text
\section{Introduction}

Gaussian processes have plethora of applications, ranging from the modeling of environmental processes in \emph{geostatistics} (e.g., \cite{N.Cressie, AE.Gelfand}) to supervised regression and classification in \emph{machine learning} \cite{J.Bernardo,W.Chu,CE.Rasmussen}, and the simulation of complex computer models \cite{J.Sacks}. The versatility of the correlation structure of Gaussian processes provides a tractable and powerful tool for the modeling of large and highly dependent environmental variables. As a common approach in the field of spatial statistics, the covariance functions of Gaussian processes are assumed to belong to a parametric family. High precision estimates of the covariance parameters are pivotal for interpolating Gaussian processes which is the ultimate goal in many geostatistical problems \cite{N.Cressie, ML.Stein2}.

In the last two decades, there has been extensive research regarding the statistical and computational facets of the estimation of the Gaussian processes' covariance parameters. Maximum likelihood estimation (MLE) was the earliest favored algorithm in the geostatistics community, e.g., Mardia {\rm et al.} \cite{KV.Mardia} and Ying \cite{Z.Ying}. However, solving systems of linear equations is inevitable to evaluate the Gaussian likelihood. Notwithstanding the recent advances toward scalable and efficient solution of the system of linear equation ( e.g., iterative \emph{Krylov} subspace method or block preconditioned conjugate gradient algorithm \cite{DP.OLeary}) which moderately reduces the computational and memory costs of the direct evaluation of the precision matrix, obtaining the MLE of unknown covariance parameters using such linear systems solvers is still a strenuous task, especially for a generic Gaussian spatial process observed at numerous and possibly irregularly spaced locations. Approximating the likelihood function by tapering the covariance matrix is another class of algorithms aiming to reduce the numerical burden of MLE (see Kaufman {\rm et al.} \cite{CG.Kaufman}). Due to the sparsity of the tapered covariance matrix, its inverse can be computed in a faster and more stable way. Recent studies \cite{J.Du,CG.Kaufman,D.Wang} demonstrate the consistency and asymptotic normality of this algorithm under some mild conditions on the taper function.

Because of the obstacles of solving system of linear equations for massive data, which is necessary for tapered and exact MLE, it is of great interest to develop estimation techniques without requiring such extensive computations. Such class of algorithms, which will be referred to as \emph{inversion-free}, are based upon optimizing a loss function whose form (and its derivatives) is independent of the precision matrix of data. The first attempt toward such a goal has been done by Anitescu, Chen and Stein \cite{M.Anitescu} in $2014$ (referred here to as ACS). Their proposed procedure is faster and more stable than likelihood based algorithms. In \cite{M.Anitescu}, the covariance parameters are estimated by computing the global maximizer of a non-concave program. Simulation studies verify the efficiency of ACS's approach in the case that the covariance matrix has a bounded condition number. The main purpose of this paper is to appraise the asymptotic properties of the ACS's algorithm such as consistency, minimax optimality and asymptotic normality. The developed theory in this paper shows that ACS's algorithm has the same asymptotic rate of convergence as the MLE. In practice, the solution of ACS's optimization problem may also serve as the starting point (initial guess) of a likelihood maximization procedure.

In geostatistics, there are two common asymptotic regimes: \emph{increasing-domain} and 
\emph {fixed-domain}, the latter sometimes referred to as infill asymptotics (see \cite{ML.Stein2}, Section $3.3$ or \cite{N.Cressie}, p. $480$). In the former setting, the minimum distance among the sampling points is bounded away from zero and more samples are collected by increasing the diameter of the spatial domain. In the latter regime, the data are sampled in a fixed and bounded domain, and the observations get denser as the sample size $n$ increases.

Zhang \cite{H.Zhang} showed that not all the covariance parameters are consistently estimable in the fixed domain regime. Strictly speaking, there is no asymptotically consistent algorithm for estimating the \emph{non-micro ergodic} covariance parameters, which do not asymptotically affect the interpolation mean square error (see \cite{ML.Stein2} for a precise definition). On the other hand, it is known in the literature that subject to some mild regularity conditions, maximizing the likelihood provides a strongly consistent and asymptotically normal estimate for all the covariance parameters in the increasing domain setting \cite{F.Bachoc,KV.Mardia}.

Increasing domain asymptotic analysis of covariance estimation has two significant benefits. First, unlike the infill asymptotic setting, the geometry of the spatial sampling has a crucial impact on the asymptotic distribution of the parameter estimate. Thus, this regime is a natural asymptotic framework for assessing the role of irregularity of spatial sampling on the covariance parameter estimation \cite{F.Bachoc}. This claim can be verified by a deeper look at the asymptotic distribution of the microergodic parameter estimates in the fixed domain (see e.g., \cite{ML.Stein2,Z.Ying} for MLE and \cite{J.Du,CG.Kaufman,D.Wang} for tapered MLE). Another significant characteristic of increasing domain regime is that the covariance matrix has a universally bounded condition number as $n$ grows under some mild regularity conditions. This feature of the covariance matrix plays a major role in our asymptotic analysis. Although in many geostatistical applications in a fixed bounded domain the condition number of the covariance matrix increases at least linearly with respect to $n$, preconditioning filters is commonly used to uniformly control the condition number independent of $n$ \cite{J.Chen,ML.Stein2}. Therefore, we believe that our developed increasing domain asymptotics can be useful for the fixed domain analysis of preconditioned inversion-free algorithms.

\paragraph{Outline of main results.}
This paper studies the increasing domain asymptotic behaviour of ACS's estimation algorithm introduced in \cite{M.Anitescu}. Specifically, suppose that $\ff{G}$ is a zero mean stationary Gaussian process in $\bb{R}^d$ with covariance function $\cov\paren{\ff{G}\paren{s},\ff{G}\paren{s'}} = R\paren{s-s',\eta}$ in which $\eta\in\Omega$ denotes the vector of unknown covariance parameters. One realization of $\ff{G}$ has been observed on a \emph{$d-$dimensional perturbed regular lattice} of $n = N^d$ points, which will be formally defined in Section \ref{ProbForm}. 
The specific contributions of this work are given as follow:

\begin{enumerate}[label = (\alph*)]
\item Assuming the polynomial decay of $R\paren{s,\eta}$ and its gradient (with respect to $\eta$) in terms of the Euclidean norm of $s$, and under some mild identifiability condition on $R$, we prove that the global maximizer of ACS's method consistently estimates $\eta$. Furthermore, the estimation error is of order $\sqrt{n^{-1}\ln n}$ which is shown to be minimax optimal up to some $\sqrt{\ln n}$ term. 
\item As the proposed loss function in Anitescu {\rm et al.} \cite{M.Anitescu} is not jointly concave in $\eta$, finding its global maximizer is challenging. For a large enough sample size and under an additional condition regarding the polynomial decay of the second derivative of $R\paren{s,\eta}$ with respect to $\eta$, we show that any \emph{stationary point} of this non-concave program is concentrated around the true $\eta$ with radius of order $\sqrt{n^{-1}\ln n}$. 
\item The asymptotic normality of the stationary points of the aforementioned algorithm will be substantiated under some mild restriction on the third derivative of $R$ with respect to $\eta$. 
\end{enumerate}

Furthermore, the Appendix contains several easy-to-reference nonasymptotic results on the global and local behaviour of the quadratic forms of Gaussian processes which may come in handy for the analysis of certain problems in statistics and machine learning.

\paragraph{Plan of the paper.}
In Section \ref{ProbForm}, we formulate ACS's inversion-free estimation method and precisely introduce the geometry of the sampling points. Section \ref{Analysis} expresses the necessary assumptions and studies the asymptotic properties of the estimation algorithm. Section \ref{ConvRate} presents the convergence rate of the global and local maximizers of the optimization problem introduced in Section \ref{ProbForm}. We investigate the minimax optimality and the asymptotic normality of the local maximizers in Section \ref{MinMaxOptimandAsympNorm}. The objective of Section \ref{SimulRes} is to assess the performance of ACS's algorithm and verify the developed theory using simulation studies on synthetic data. Section \ref{Discus} serves as the conclusion and discusses the future directions. Section \ref{Proofs} presents the proof of the main results. Finally, the Appendix contains some auxiliary technicalities on the nonasymptotic behaviour of the quadratic forms of Gaussian processes and of large covariance matrices with polynomially decaying off-diagonal entries, which are essential in Section \ref{Proofs}.

\paragraph{Notation.}
For any $m\in\bb{N}$, $I_m$ and $\zero_m$ respectively denote the $m$ by $m$ identity matrix and all zeros column vector of length $m$. Moreover, $\wedge$ and $\vee$ stand for the minimum and maximum operators. For two matrices of the same size $M$ and $M'$, $\InnerProd{M}{M'}{}\coloneqq \sum_{i,j} M_{ij}M'_{ij}$ denotes their usual inner product. We use the following matrix norms on $M\in\bb{R}^{m\times n}$. For any $1\leq p<\infty$, $\LpNorm{M}{p}\; \coloneqq \paren{\sum_{i,j} \abs{M_{ij}}^p}^{1/p}$ stands for the element-wise $p-$norm of $M$. The usual operator norm (largest singular value of $M$) is represented by $\OpNorm{M}{2}{2}$. Associated to any finite set $\cc{D}\subset\bb{R}^d$ and $s\in\cc{D}$, we define $\cc{D}-s \coloneqq \set{s'-s:\;s'\in\cc{D}}$. Moreover, we write $\cc{D}\paren{s,r} \coloneqq \set{s'\in\cc{D}:\;\LpNorm{s'-s}{2}\leq r}$ and $\cc{D}^c\paren{s,r} = \cc{D}\setminus \cc{D}\paren{s,r}$, for any non-negative $r$. $\cc{S}^m$ stands for the $m-$dimensional unit sphere with respect to the Euclidean norm, i.e., $\cc{S}^m\coloneqq \set{v\in\bb{R}^{m+1}:\; \LpNorm{v}{2}=1}$. For a random sequence $x_n$ and a deterministic positive sequence $a_n$, we write $x_n=\cc{O}_{\bb{P}}\paren{a_n}$ when $x_n$ is bounded below by $a_n$ asymptotically, i.e., $\lim\limits_{n\rightarrow\infty} \Pr\paren{\abs{x_n}\geq Ca_n} = 0$ for some $C>0$.  For two sets  $\Omega_1,\Omega_2\subset\bb{R}^m$, $\dist\paren{\Omega_1,\Omega_2} \coloneqq \inf_{\omega_i\in\Omega_i,\;i=1,2} \LpNorm{\omega_1-\omega_2}{2}$ represents their mutual distance with respect to the Euclidean norm. Moreover, for $\cc{A}\subset\bb{R}^m$ and $r>0$, $\cc{N}_r\paren{\cc{A}}$ denotes a subset of $\cc{A}$ (of minimal size) such that for any $a\in\cc{A}$, $\dist\paren{\set{a},\cc{N}_r\paren{\cc{A}}}\leq r$. The cardinality of such set will be referred as the \emph{covering number} of $\cc{A}$. Given spatial points $\set{s_1,\ldots,s_n}\in\bb{R}^d$ and the covariance function $R\paren{\cdot,\eta}$ parametrized by $\eta = \paren{\eta_1,\ldots,\eta_m}$, the associated covariance matrix and its derivatives are defined as
\begin{equation*}
R_n\paren{\eta} = \brac{R\paren{s_i-s_j,\eta}}^n_{i,j=1},\quad\frac{\partial}{\partial\eta_r}R_n\paren{\eta} = \brac{\frac{\partial}{\partial\eta_r}R\paren{s_i-s_j,\eta}}^n_{i,j=1},\;\;\forall\;r=1,\ldots,m.
\end{equation*}
The higher order derivatives can be defined in an analogous way. For two random vectors $v_1$ and $v_2$, the expression $v_1\eqd v_2$ means that they have the same distribution. Lastly, $D\paren{\bb{P}_1 \parallel \bb{P}_2}$ indicates the \emph{Kullback-Leibler} divergence of two distributions $\bb{P}_i,\;i=1,2$.

\section{Problem set up and ACS's estimation algorithm}\label{ProbForm}
Consider a mean zero and \emph{stationary} (real valued) Gaussian process $\ff{G}:\bb{R}^d\mapsto\bb{R}$ whose covariance function belongs to a parametric family $\cc{C}_{R,\Omega}\coloneqq \set{R\paren{\cdot,\eta}:\eta\subset\Omega}$. In other words, there exists $\eta_0\in\Omega$ for which 
\begin{equation}\label{CovStrct}
{\rm E} \ff{G}\paren{s}\ff{G}\paren{s'} = R\paren{s-s',\eta_0},\quad \forall\;s,s'\in\bb{R}^d.
\end{equation}
Moreover, there is $m\in\bb{N}$ such that $\Omega$ is a \emph{compact} $\paren{m+1}$ dimensional subset of $\bb{R}^{m+1}$ with respect to the Euclidean topology. Thus, $\cc{C}_{R,\Omega}$ is assumed to be a \emph{finite dimensional} class. For analytical convenience, we consider an alternative formulation for the unknown parameters of the covariance function given as
\begin{equation*}
\eta_0 = \paren{\phi_0,\theta_0},\quad \phi_0\in\cc{I},\;\theta_0\in\Theta.
\end{equation*}
In this new representation, $\phi_0$ is a strictly positive scalar denoting the variance of $\ff{G}$ and the $m-$dimensional vector $\theta_0$ stands for the other parameters of $R$. Moreover, $\cc{I}\subset\paren{0,\infty}$ is a bounded interval and $\Theta\subset\bb{R}^m$ is compact. For instance in isotropic Matern or powered exponential classes, $\theta_0$ is a positive vector representing the \emph{range parameter} and \emph{fractal index}. Finally, \eqref{CovStrct} can be rewritten as $R\paren{s-s',\eta_0} = \phi_0 K\paren{s-s',\theta_0}$, in which $K\paren{\cdot,\theta_0}$ indicates the correlation function parametrized by $\theta_0$.

The objective is to estimate $\eta_0$ observing one realization of $\ff{G}$ at a deterministic set of spatial locations $\cc{D}_n = \set{s_1,\ldots,s_n}\subset\bb{R}^d$. It is beneficial to emphasize that our asymptotic analysis lies in the increasing domain regime in which the diameter of $\cc{D}_n$ tends to infinity as $n\rightarrow\infty$. The collected samples form a zero mean Gaussian column vector $Y = \brac{\ff{G}\paren{s_1},\ldots,\ff{G}\paren{s_n}}^\top$ of length $n$. Before proceeding further, let us precisely introduce the geometric structure of $\cc{D}_n$.

\begin{assu}\label{ObsLocAssu}
Suppose that there is $N\in\bb{N}$ such that $n = N^d$. There exists $\delta\in\brapar{0,1/2}$ for which $\cc{D}_n$ is a \emph{$d-$dimensional $\delta-$perturbed regular lattice} (with unit grid size). Namely, 
\begin{equation*}
\cc{D}_n = \set{v_i+\delta p_i:\;v_i\in\cc{V}_{N,d},\; p_i\in\brac{-1,1}^d}^n_{i=1},
\end{equation*}
in which $\cc{V}_{N,d} \coloneqq \set{v_1,\ldots,v_n} = \set{1,\ldots,N}^d$ denotes the $d-$dimensional regular lattice.
\end{assu}

The condition $\delta\in\brapar{0,1/2}$ guarantees the existence of a strictly positive minimum distance $\paren{1-2\delta}$ between the distinct points in $\cc{D}_n$. The scalar $\delta$ quantifies the amount of irregularity in $\cc{D}_n$. In the case of $\delta = 0$, $\cc{D}_n$ forms a regular lattice and the irregularity can be more apparent as $\delta$ increases. Although the absence of randomness in Assumption \ref{ObsLocAssu} may appear problematic at first sight, our theoretical contributions are not restricted to any further set of strong conditions on $p_i$'s. For instance, the presented results in the next section hold almost surely if $p_i$'s are independent (or even dependent) draws of a distribution supported on $\brac{-1,1}^d$ which is absolute continuous with respect to the Lebesgue measure.. 

Now we present ACS's estimation algorithm introduced in \cite{M.Anitescu}. Define,
\begin{equation}\label{InvFreeOptProbEtaForm}
\hat{\eta}_n = \argmax_{\eta\in\Omega} F_n\paren{Y,\eta},\quad\mbox{where}\quad F_n\paren{Y,\eta}\coloneqq \frac{1}{n}\set{Y^\top R_n\paren{\eta}Y - \frac{1}{2}\LpNorm{R_n\paren{\eta}}{2}^2 }.
\end{equation}
Note that $F_n\paren{Y,\eta}$ does not depend on the Cholesky factorization of $R_n\paren{\eta}$ and regardless of the choice of covariance function, it can be evaluated in $\cc{O}\paren{n^2}$ operations, even for irregularly spaced samples which is an improvement over the conventional likelihood function. The optimization algorithm in \eqref{InvFreeOptProbEtaForm} can be reformulated as 
\begin{equation}\label{InvFreeOptProb}
\paren{\hat{\phi}_n,\hat{\theta}_n} = \argmax_{\paren{\phi,\theta}\in\cc{I}\times \Theta} F_n\paren{Y,\phi,\theta},\quad\mbox{where}\quad F_n\paren{Y,\phi,\theta}\coloneqq \frac{1}{n}\set{\phi Y^\top K_n\paren{\theta}Y -\frac{\phi^2}{2}\LpNorm{K_n\paren{\theta}}{2}^2 }.
\end{equation}
Despite the fact that $F_n\paren{Y,\phi,\theta}$ has a simple quadratic (concave) form of $\phi$, its dependence to $\theta$ is fairly complicated . For instance $F_n$ is not a concave function of $\theta$ even for the classic case of isotropic exponential covariance. So, accurate approximation of its global maximizer can be computationally expensive. 

\begin{rem}\label{Rem2.1}
We conclude this section mentioning two characteristics of $F_n\paren{Y,\phi,\theta}$ that can provide a theoretical clue for generalizing ACS's loss function to a broader class of inversion-free losses. The first property is also critical for the theoretical analysis in the next section.
\begin{enumerate}[leftmargin=*]
\item As stated in \cite{M.Anitescu}, the true parameter $\eta_0$ is a stationary point of the expected value of $F_n\paren{Y,\eta}$. That is, 
\begin{equation*}
{\rm E} \set{\frac{\partial}{\partial\eta_j}F_n\paren{Y,\eta}\mid_{\eta=\eta_0}} = 0,\quad\forall\; j=1,\ldots,\paren{m+1}.
\end{equation*}
Roughly speaking, $\eta_0$ is located in a small neighborhood of a stationary point of \eqref{InvFreeOptProbEtaForm} if the gradient of $F_n\paren{Y,\eta}$ is smooth enough and concentrated around its expected value. The next fact, which has not been stated in \cite{M.Anitescu}, reveals a profound connection of \eqref{InvFreeOptProbEtaForm} to MLE.
\item For brevity, define $H_n\paren{Y,\eta}\coloneqq F_n\paren{Y,\eta}-F_n\paren{Y,\eta_0}$ and $L_n\paren{\eta,\eta_0} \coloneqq R^{1/2}_n\paren{\eta_0}R^{-1}_n\paren{\eta}R^{1/2}_n\paren{\eta_0}$. Also, let $\tilde{\eta}_n$ denotes the MLE of $\eta$. Obvious calculations lead to
\begin{align*}
&\hat{\eta}_n = \argmax_{\eta\in\Omega}H_n\paren{Y,\eta},\\
&\tilde{\eta}_n = \argmax_{\eta\in\Omega} H'_n\paren{Y,\eta}, \quad\mbox{where}\quad H'_n\paren{Y,\eta}\coloneqq\frac{1}{n}\Bigset{-\log\det L_n\paren{\eta,\eta_0} - n + Y^\top R^{-1}_n\paren{\eta} Y}.
\end{align*}
Notice that ${\rm E} H_n\paren{Y,\eta_0} = {\rm E} H'_n\paren{Y,\eta_0} = 0$. Under Assumption \ref{ObsLocAssu} and using similar techniques as Appendix \ref{AuxRes}, one can guarantee the existence of a universal scalar $C\in\paren{0,\infty}$ such that 
\begin{equation}\label{Minorizer}
{\rm E} H_n\paren{Y,\eta} \leq C\;{\rm E} H'_n\paren{Y,\eta},\quad\forall \eta\in\Omega.
\end{equation}
Namely, in the increasing domain regime, the objective function proposed in \cite{M.Anitescu} can be viewed as an approximate \emph{minorizing surrogate} of the likelihood function in the expected value sense (it forms a perfect minorizer whenever $C = 1$ in \eqref{Minorizer}).
\end{enumerate}
\end{rem}

\section{Main results}\label{Analysis}

We establish the asymptotic characteristics of the estimation algorithm in \eqref{InvFreeOptProbEtaForm}. Section \ref{ConvRate} examines the consistency of the global maximizer and the stationary points of \eqref{InvFreeOptProbEtaForm} under some sufficient conditions on $\Omega$ and the correlation function $K\paren{\cdot,\theta}$. The near minimax optimality and the asymptotic normality of the stationary points will be covered in Section \ref{MinMaxOptimandAsympNorm} 

\subsection{Consistency and the convergence rate}\label{ConvRate}

The following assumptions are assumed on the parameter space $\Omega = \cc{I}\times\Theta$ and the correlation function $K\paren{\cdot,\theta}$ for studying the asymptotic behaviour of the global maximizer of \eqref{InvFreeOptProbEtaForm}. Similar but slightly stronger conditions have been used in \cite{F.Bachoc} for the increasing domain asymptotic analysis of MLE.

\begin{assu}\label{Covar&ParamSpAssu}
The following conditions are satisfied by $\Omega$ and $K$.
\begin{enumerate}[label=(A\arabic*),leftmargin=*]
\item $\Theta$ and $\cc{I}$ are \emph{compact connected} subsets of $\bb{R}^m$ and $\paren{0,\infty}$, respectively. 
\item There are bounded scalars $M>0$ and $r_1>1$ such that for any $s\in\cc{D}_n$,
\begin{equation}\label{IdentifCond}
\max_{s'\in\cc{D}_n\paren{s,r_1} }\abs{K\paren{s'-s,\theta_2}-K\paren{s'-s,\theta_1}} \geq M\LpNorm{\theta_2-\theta_1}{2},\quad\forall\;\theta_1,\theta_2\in\Theta.
\end{equation}
\item For some $q\in\set{1,2,3}$, there exists a positive scalar $C_{K,\Theta}$ such that
\begin{equation*}
\max_{\theta\in\Theta}\paren{\abs{K\paren{s,\theta}} \vee \abs{\frac{\partial }{\partial\theta_{j_1}}\ldots\frac{\partial }{\partial\theta_{j_q}} K\paren{s,\theta}}}\leq \frac{C_{K,\Theta}}{1+\LpNorm{s}{2}^{d+1}},\quad\forall\;s\in\bb{R}^m,
\end{equation*}
for any $j_1,\ldots,j_q\in\set{1,\ldots,m}$.
\end{enumerate}
\end{assu}

Condition $\paren{A2}$, assuring the \emph{identifiability} of $\theta$ from the $K$, holds for the standard class of correlation functions such as Matern, powered exponential and rational quadratic. A detailed look at $\paren{A2}$ is postponed to the end of this section. Before commenting on $\paren{A3}$, let us define the family of geometric anisotropic covariance functions.

\begin{defn}\label{GeoAnisGPDefn}
Let $\ff{G}:\bb{R}^d\mapsto\bb{R}$ be a zero mean stationary Gaussian process in $\bb{R}^d$. Then $\ff{G}$ is called \emph{geometric anisotropic} if
\begin{equation}\label{GeoAnis}
R\paren{s-s',\eta_0} \coloneqq {\rm E} \ff{G}\paren{s}\ff{G}\paren{s'} = \phi_0 K\paren{\sqrt{\paren{s-s'}^\top A_0\paren{s-s'}}},\quad \forall\;s,s'\in\bb{R}^d,
\end{equation}
for $\phi_0 > 0$, \emph{symmetric positive definite} matrix $A_0\in\bb{R}^{d\times d}$, $\eta_0 = \paren{\phi_0, A_0}$ and a correlation function $K$. Specifically if $A_0 = \theta^{-1}_0I_d$ for some strictly positive $\theta_0$, then $\ff{G}$ is said to be an \emph{isotropic} Gaussian process. 
\end{defn}

For geometric anisotropic processes, $K$ is either assumed to be a fully known function (in this case $\eta_0 = \paren{\phi_0, A_0}$ in which $\phi_0\in\cc{I}$ and $A_0\in\Theta$, denotes the unknown parameters of covariance function) or known up to some strictly positive scalar $\nu_0$, usually refers to as the \emph{fractal index}. In the latter case, $\eta_0 = \set{\phi_0, \theta_0 = \paren{A_0,\nu_0}}$. Now, we mention some commonly used class of covariance functions, with unknown fractal index, satisfying $\paren{A3}$ with $q=1$ (appearing in the statement of the first main result in this section). It is supposed in the following Remark that 
\begin{equation}\label{EigValsUppLwBnd}
\Lambda_{\min,\Theta}\leq \min_{A_0\in\Theta} \frac{1}{\OpNorm{A^{-1}_0}{2}{2}} \leq \max_{A_0\in\Theta} \OpNorm{A_0}{2}{2} \leq \Lambda_{\max,\Theta},
\end{equation}
for strictly positive and bounded scalars $\Lambda_{\min,\Theta}$ and  $\Lambda_{\max,\Theta}$. Namely, all eigenvalues of $A_0$ are universally bounded away from zero and infinity.

\begin{rem}\label{Rem3.1}
Any compactly supported correlation function, such as \emph{spherical or Wendland family} \cite{H.Wendland} on $\bb{R}^d$ trivially admits $\paren{A3}$. Assumption $\paren{A3}$ with $q=1$ also holds for some classical families of geometric anisotropic covariances such as:
\begin{enumerate}[label=(\alph*),leftmargin=*]
\item \emph{Matern:} The Gaussian process $\ff{G}$ has Matern covariance function if it fulfills \eqref{GeoAnis} with
\begin{equation}\label{MaternCovFunc}
K\paren{r} = \frac{2^{1-\nu_0}}{\Gamma\paren{\nu_0}}r^{\nu_0}\ff{K}_{\nu_0}\paren{r},
\end{equation}
in which $\nu_0$ is an unknown, strictly positive scalar lies in a compact space. Moreover, $\Gamma\paren{\cdot}$ and $\ff{K}_{\nu_0}\paren{\cdot}$ represent the Gamma function and the modified Bessel function of the second kind, respectively. The parametric Matern family satisfies $\paren{A3}$ provided condition \eqref{EigValsUppLwBnd}.
\item \emph{Powered exponential:} A covariance function in this class satisfies \eqref{GeoAnis} with $K\paren{r} = e^{-r^{\nu_0}}$ and $\nu_0\in\paren{0,2}$. Like Matern class, assuming \eqref{EigValsUppLwBnd}, any member of powered exponential family fulfills $\paren{A3}$ with $q=1$.
\item \emph{Rational quadratic:} The elements of this class are of the form \eqref{GeoAnis} with $K\paren{r} = \paren{1+r^2}^{-\paren{\frac{d}{2}+\nu_0}}$ and $\nu_0>0$. For the case of known fractal index, $\paren{A3}$ with $q=1$ is valid, if \eqref{EigValsUppLwBnd} holds. Note that for unknown $\nu_0$ the exact same statement is satisfied under a slightly stronger condition of $\nu_0 > 1/2$  
\end{enumerate}
\end{rem}

Parts $\paren{b}$ and $\paren{c}$ of Remark \ref{Rem3.1} are verifiable by straightforward algebra and differentiation rules. In order to demonstrate part $\paren{a}$, see \cite{M.Abramowitz} for the derivative properties of the Bessel function of the second kind (with respect to the entries of $A_0$) and see Lemma \ref{MaternDecayDiffwrtNu} for the asymptotic behaviour of the partial derivatives of the Matern covariance with respect to $\nu_0$. Now, we state the first significant result of this section regarding the consistency of the global maximizer of \eqref{InvFreeOptProb} under Assumption \ref{Covar&ParamSpAssu} and perturbed regular lattice sampling. 

\begin{thm}\label{GlobMinRate}
Suppose that Assumptions \ref{ObsLocAssu} and \ref{Covar&ParamSpAssu} with $q=1$ hold for $\cc{D}_n$, $\Omega$ and $K$. Then the maximizer of \eqref{InvFreeOptProb} satisfies 
	
\begin{equation}\label{ConsistencyGlobMaxEq}
\Pr\paren{ \LpNorm{\hat{\theta}_n-\theta_0}{2}\vee \abs{\frac{\hat{\phi}_n}{\phi_0}-1} \geq C\sqrt{\frac{\ln n}{n}}}\rightarrow 0,\quad\mbox{as}\; n\rightarrow\infty,
\end{equation}
for some constant $C$ (which depends on $\cc{D}_n$, $\Omega$ and $K$).
\end{thm}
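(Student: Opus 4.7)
The plan is to cast \eqref{InvFreeOptProb} as a standard M-estimation problem. Since $Y$ is zero-mean Gaussian with covariance $\phi_0 K_n(\theta_0)$, a direct calculation gives
\begin{equation*}
f_n(\phi,\theta) \coloneqq {\rm E}\, F_n(Y,\phi,\theta) = \frac{\phi_0^2}{2n}\LpNorm{K_n(\theta_0)}{2}^2 - \frac{1}{2n}\LpNorm{\phi K_n(\theta) - \phi_0 K_n(\theta_0)}{2}^2,
\end{equation*}
so $\eta_0$ is the unique population maximizer and the excess risk is a scaled Frobenius distance. Consistency thus reduces to (i) a quadratic minorant of the excess risk and (ii) uniform concentration of the empirical process $F_n - f_n$.

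For step (i), the goal is $n^{-1}\LpNorm{\phi K_n(\theta) - \phi_0 K_n(\theta_0)}{2}^2 \geq c\,(\abs{\phi-\phi_0}^2 + \LpNorm{\theta-\theta_0}{2}^2)$ for some $c>0$ independent of $n$. The diagonal entries of $\phi K_n(\theta) - \phi_0 K_n(\theta_0)$ all equal $\phi - \phi_0$ (since $K(0,\cdot)\equiv 1$), which automatically supplies the $\abs{\phi-\phi_0}^2$ contribution. For the $\theta$ direction, assumption (A2) produces, for each $s\in\cc{D}_n$, some $s'(s)\in\cc{D}_n(s,r_1)$ with $\abs{K(s'(s)-s,\theta) - K(s'(s)-s,\theta_0)}\geq M\LpNorm{\theta-\theta_0}{2}$; since distinct $s$ index distinct rows, these contribute $n$ distinct entries to the Frobenius norm. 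A short case split according to whether $\abs{\phi-\phi_0}$ is small or large relative to $\LpNorm{\theta-\theta_0}{2}$, combined with the elementary bound $(\alpha+\beta)^2\geq \alpha^2/2 - \beta^2$ to absorb the cross term $\phi(\phi-\phi_0)K(s'(s)-s,\theta_0)$ appearing at off-diagonal entries, then yields the desired minorant.

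For step (ii), observe that $(F_n - f_n)(\phi,\theta) = \phi n^{-1}\{Y^\top K_n(\theta) Y - {\rm E}\, Y^\top K_n(\theta) Y\}$ is a centered Gaussian quadratic form. A Hanson--Wright type inequality gives exponential tails controlled by the Frobenius norm of $K_n^{1/2}(\theta_0) K_n(\theta) K_n^{1/2}(\theta_0)$; under Assumption \ref{ObsLocAssu} and the polynomial decay of (A3), this is $\cc{O}(\sqrt{n})$, with the Appendix supplying the uniform operator-norm and trace estimates for matrices with polynomially decaying off-diagonals. Combining this pointwise tail with a $\delta$-net of $\Omega$ of cardinality $\cc{O}(\delta^{-(m+1)})$ and a high-probability Lipschitz bound on $\eta\mapsto F_n(Y,\eta)$ — obtained by applying the same quadratic-form tools to $\partial_\eta F_n$ via (A3) with $q=1$ — a union bound at threshold $C\ln n$ yields $\sup_{\eta\in\Omega}\abs{(F_n - f_n)(\eta)} = \cc{O}_{\bb{P}}(\sqrt{\ln n/n})$.

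Plugging (i) and (ii) into the basic inequality $F_n(Y,\hat\eta_n)\geq F_n(Y,\eta_0)$ naively yields only $\LpNorm{\hat\eta_n - \eta_0}{2} = \cc{O}_{\bb{P}}((\ln n/n)^{1/4})$, weaker than advertised. To recover $\sqrt{\ln n/n}$ I would localize step (ii): using (A3) again, $\LpNorm{\phi K_n(\theta) - \phi_0 K_n(\theta_0)}{2}$ is $\cc{O}(\sqrt{n}\,\LpNorm{\eta-\eta_0}{2})$, so Hanson--Wright applied to the increment $(F_n - f_n)(\eta) - (F_n - f_n)(\eta_0)$ gives fluctuations of order $\LpNorm{\eta-\eta_0}{2}\sqrt{\ln n/n}$. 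A peeling argument over dyadic shells $\{2^k\sqrt{\ln n/n} < \LpNorm{\eta-\eta_0}{2} \leq 2^{k+1}\sqrt{\ln n/n}\}$, combined with the quadratic minorant, then delivers \eqref{ConsistencyGlobMaxEq}. The main obstacle is this localization step: establishing the $\LpNorm{\eta-\eta_0}{2}$-scaled variance of the increment uniformly over shells, and pushing the union-bound logarithmic overhead through without inflating the rate beyond the claimed $\sqrt{\ln n}$ factor.
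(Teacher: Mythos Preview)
Your plan is workable and would deliver the claimed rate, but the paper proceeds quite differently. Rather than treating \eqref{InvFreeOptProb} as a generic M-estimator in the full $(m+1)$-dimensional $\eta$-space and then localizing via peeling, the paper first eliminates $\phi$ using its closed-form maximizer and reduces to $\hat\theta_n=\argmax_{\theta}Z^\top H_{\theta_0,\theta}Z$, where $H_{\theta_0,\theta}=K_n^{1/2}(\theta_0)K_n(\theta)K_n^{1/2}(\theta_0)/\LpNorm{K_n(\theta)}{2}$. The sharp rate then comes from a single pointwise lemma: for $\mathfrak{A}=H_{\theta_0,\theta_0}-H_{\theta_0,\theta}$ one has $\tr(\mathfrak{A})\gtrsim n^{-1/2}\LpNorm{K_n(\theta_0)-K_n(\theta)}{2}^2$ while $\LpNorm{\mathfrak{A}}{2}\lesssim\LpNorm{K_n(\theta_0)-K_n(\theta)}{2}$, so the ratio $\tr(\mathfrak{A})/\LpNorm{\mathfrak{A}}{2}\gtrsim\sqrt{n}\,\LpNorm{\theta-\theta_0}{2}$ exceeds $\sqrt{\ln n}$ precisely when $\LpNorm{\theta-\theta_0}{2}\geq C_{\min}\sqrt{\ln n/n}$, and a single Hanson--Wright call gives $\Pr\{Z^\top H_{\theta_0,\theta_0}Z\leq Z^\top H_{\theta_0,\theta}Z+c\sqrt{\ln n/n}\}\leq n^{-(1+\xi)}$. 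A union bound over one net of $\Theta^c_{\theta_0}(r_n)$ finishes; $\hat\phi_n$ is handled afterwards, conditionally. This is the same ``quadratic bias versus linear fluctuation'' mechanism that drives your localization, but the paper encodes it directly in the trace-to-Frobenius ratio and so bypasses the peeling machinery entirely. Your route is more modular and closer to the standard empirical-process template; the paper's is more compact and avoids both the shell bookkeeping and the joint $(\phi,\theta)$ minorant (your case split), at the price of a bespoke lemma tailored to the specific objective $H_{\theta_0,\theta}$.
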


\begin{rem}
Let $\phi_{\min}$ and $\phi_{\max}$ denote the smallest and largest element in $\cc{I}$. Obviously, $\phi_{\min}$ and $\phi_{\max}$ are well defined and finite due to $\paren{A1}$. Moreover,
\begin{equation*}
\paren{1 \wedge \phi_{\min}}\paren{\LpNorm{\hat{\theta}_n-\theta_0}{2}\vee \abs{\frac{\hat{\phi}_n}{\phi_0}-1}}\leq\LpNorm{\hat{\eta}_n-\eta_0}{2}\leq \sqrt{1+\phi^2_{\max}}\paren{\LpNorm{\hat{\theta}_n-\theta_0}{2}\vee \abs{\frac{\hat{\phi}_n}{\phi_0}-1}}.
\end{equation*}
Thus \eqref{ConsistencyGlobMaxEq} is a stronger statement than $\LpNorm{\hat{\eta}_n-\eta_0}{2} = \cc{O}_{\bb{P}}\paren{\sqrt{n^{-1}\ln n}}$ and they are equivalent when $\phi_{\min} > 0$ (which is true under $A1$). 
\end{rem}

An analogous consistency result has been proved recently by Bachoc \cite{F.Bachoc} for the MLE and cross validation estimator. Based upon Theorem \ref{GlobMinRate}, the asymptotic rate of ACS's algorithm has not been sacrificed for increasing the speed and memory efficiency comparing to the MLE. 

Finally, we concisely address the role of the identifiability condition $\paren{A2}$ in Theorem \ref{GlobMinRate}. Actually, $\paren{A2}$ plays a decisive role in translating consistent estimation of the correlation matrix (in the relative sense) to $\eta_0$. Strictly speaking, $\paren{A2}$ is required to deduce \eqref{ConsistencyGlobMaxEq} from the probabilistic statement 
\begin{equation*}
\frac{1}{\sqrt{n}}\LpNorm{K_n(\hat{\theta}_n)-K_n\paren{\theta_0}}{2} = \cc{O}_{\bb{P}}\paren{\sqrt{n^{-1}\ln n}}.
\end{equation*}

The rest of this section is devoted to the analysis of the stationary points of \eqref{InvFreeOptProb}. Solving the unique root of the derivative of $F_n\paren{Y,\phi,\theta}$ with respect to $\phi$, yields a closed form formula for $\hat{\phi}_n$ in terms of $\hat{\theta}_n$, $Y$ and the correlation function, namely
\begin{equation}\label{ClosedFormFormulaVar}
\hat{\phi}_n = \frac{Y^\top K_n(\hat{\theta}_n) Y}{ \LpNorm{K_n(\hat{\theta}_n)}{2}^2 }.
\end{equation}
Moreover, $\hat{\theta}_n$ can be obtained using
\begin{equation}\label{ObjFuncTheta}
\hat{\theta}_n = \argmax_{\theta\in\Theta}\; G_n\paren{Y,\theta},\quad\mbox{where}\quad G_n\paren{Y,\theta} = \frac{Y^\top K_n\paren{\theta} Y}{ \LpNorm{K_n\paren{\theta}}{2} }.
\end{equation}
Note that for large $n$, computing the global maximizer of \eqref{ObjFuncTheta} can be less intensive than \eqref{InvFreeOptProbEtaForm} due to searching over a smaller space $\Theta$. We first visually assess the key properties of $F_n$ in some simple scenarios. In Figure \ref{Fig:Fig1}, $G_n\paren{Y,\theta}$ (which is a univariate function of scalar $\theta$) has been plotted versus $\theta$ for the two dimensional ($d = 2$) isotropic Matern covariance function in two different scenarios. In the left panel the isotropic Gaussian process $\ff{G}$ has been generated with the parameters $\paren{\phi_0,\theta_0,\nu_0} = \paren{1,4,0.5}$ and has been sampled in a randomly perturbed regular lattice with $\delta = 0.2$ and of size $N = 100$. In the right panel, the covariance parameters are given by $\paren{\phi_0,\theta_0,\nu_0} = \paren{1,6,1.5}$ and the GP is sampled at a randomly generated perturbed regular lattice with $N = 100$ and $\delta = 0.2$. As is apparent from Figure \ref{Fig:Fig1}, for these two parsimonious scenarios $G_n\paren{Y,\theta}$ is not a concave function of $\theta$ and has a single \emph{inflection point}. However, $G_n\paren{Y,\theta}$ has only one stationary point which coincides with its global maximizer. In the following, we rigorously study the large sample behaviour of the stationary points of $G_n\paren{Y,\theta}$ (as well as $F_n$) in a generic case. We initiate our analysis by stating the sufficient conditions on $K$ and $\Omega$.

\begin{figure}
\centering
\setlength\figureheight{5cm} 
\setlength\figurewidth{6.5cm} 
% This file was created by matlab2tikz v0.4.7 running on MATLAB 8.1.
% Copyright (c) 2008--2014, Nico Schlömer <nico.schloemer@gmail.com>
% All rights reserved.
% Minimal pgfplots version: 1.3
% 
% The latest updates can be retrieved from
%   http://www.mathworks.com/matlabcentral/fileexchange/22022-matlab2tikz
% where you can also make suggestions and rate matlab2tikz.
% 
\begin{tikzpicture}

\pgfplotsset{every axis/.append style={
		scaled y ticks = false, 
		scaled x ticks = false, 
		y tick label style={/pgf/number format/.cd, fixed, fixed zerofill,
			,precision=1},
		x tick label style={/pgf/number format/.cd, fixed, fixed zerofill,
			precision=0}
	}
}

\begin{axis}[%
width=\figurewidth,
height=\figureheight,
scale only axis,
xmin=0,
xmax=8,
xlabel={$\theta$},
xmajorgrids,
ymin=0.9,
ymax=3.6,
ylabel={$\frac{G_n\paren{Y,\theta}}{\sqrt{n}}$},
ymajorgrids,
name=plot1,
title={$\theta_0 = 4, \nu_0 = 0.5, \delta = 0.3$}
]
\addplot [color=black,solid,line width=1pt,mark size=1.7pt,mark=o,mark options={solid},forget plot]
  table[row sep=crcr]{  0.2666667 0.9490951 \\
  	0.5333333 1.4580007 \\
  	0.8000000 2.0400403 \\
  	1.0666667 2.4862664 \\
  	1.3333333 2.7997902 \\
  	1.6000000 3.0183310 \\
  	1.8666667 3.1713383 \\
  	2.1333333 3.2784417 \\
  	2.4000000 3.3526541 \\
  	2.6666667 3.4028051 \\
  	2.9333333 3.4350491 \\
  	3.2000000 3.4537872 \\
  	3.4666667 3.4622389 \\
  	3.7333333 3.4628069 \\
  	4.0000000 3.4573156 \\
  	4.2666667 3.4471697 \\
  	4.5333333 3.4334639 \\
  	4.8000000 3.4170589 \\
  	5.0666667 3.3986364 \\
  	5.3333333 3.3787386 \\
  	5.6000000 3.3577985 \\
  	5.8666667 3.3361618 \\
  	6.1333333 3.3141046 \\
  	6.4000000 3.2918471 \\
  	6.6666667 3.2695635 \\
  	6.9333333 3.2473912 \\
  	7.2000000 3.2254375 \\
  	7.4666667 3.2037848 \\
  	7.7333333 3.1824956 \\
  	8.0000000 3.1616156 \\
};
\end{axis}

\pgfplotsset{every axis/.append style={
		scaled y ticks = false, 
		scaled x ticks = false, 
		y tick label style={/pgf/number format/.cd, fixed, fixed zerofill,
			,precision=1},
		x tick label style={/pgf/number format/.cd, fixed, fixed zerofill,
			precision=0}
	}
}

\begin{axis}[%
width=\figurewidth,
height=\figureheight,
scale only axis,
xmin=0,
xmax=12,
xlabel={$\theta$},
xmajorgrids,
ymin=2,
ymax=13,
ylabel={$\frac{G_n\paren{Y,\theta}}{\sqrt{n}}$},
ymajorgrids,
at=(plot1.right of south east),
anchor=left of south west,
title={$\theta_0 = 6, \nu_0 = 1.5, \delta = 0.2$}
]
\addplot [color=black,solid,line width=1.0pt,mark size=1.7pt,mark=o,mark options={solid},forget plot]
  table[row sep=crcr]{ 0.400000  2.288095 \\
0.800000  4.641630 \\
1.200000  6.511755 \\
1.600000  8.027198 \\
2.000000  9.232499 \\
2.400000 10.172364 \\
2.800000 10.890693 \\
3.200000 11.427524 \\
3.600000 11.817711 \\
4.000000 12.090713 \\
4.400000 12.270911 \\
4.800000 12.378159 \\
5.200000 12.428415 \\
5.600000 12.434372 \\
6.000000 12.406046 \\
6.400000 12.351292 \\
6.800000 12.276247 \\
7.200000 12.185701 \\
7.600000 12.083388 \\
8.000000 11.972227 \\
8.400000 11.854504 \\
8.800000 11.732019 \\
9.200000 11.606194 \\
9.600000 11.478159 \\
10.000000 11.348818 \\
10.400000 11.218895 \\
10.800000 11.088977 \\
11.200000 10.959538 \\
11.600000 10.830964 \\
12.000000 10.703571 \\
};
\end{axis}
\end{tikzpicture}%
\caption{The above figures exhibit $n^{-1/2}G_n\paren{Y,\theta}$ for the isotropic Matern covariance function (with known $\nu_0$). In the left panel, $\theta_0 = 4$, $\nu_0 = 0.5$ and the spatial samples form a two dimensional randomly perturbed regular lattice of size $N = 100$ with $\delta = 0.3$. In the right panel, $\theta_0 = 6$, $\nu_0 = 1.5$ and $\cc{D}_n$ is a randomly chosen two dimensional perturbed regular lattice with $N = 100$ and $\delta = 0.3$.}
\label{Fig:Fig1}
\end{figure}
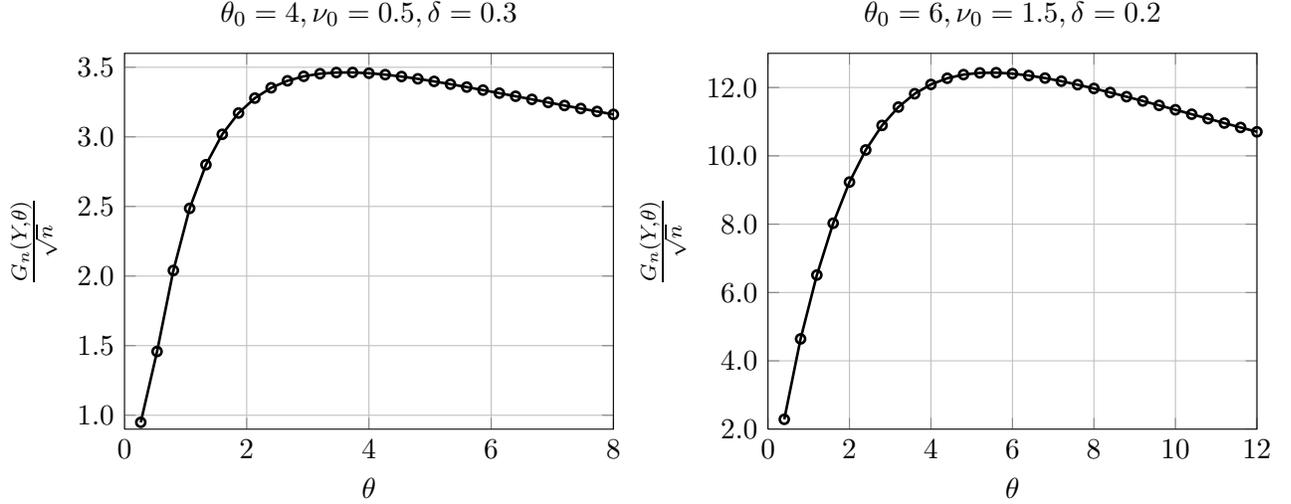

\begin{assu}\label{Covar&ParamSpAssu2}
$\paren{A1}$ holds for $\Omega$, and $K$ fulfills $\paren{A2}$ and $\paren{A3}$ with $q=2$ in Assumption \ref{Covar&ParamSpAssu}.
\end{assu}

\begin{rem}
The analysis of the stationary points of \eqref{InvFreeOptProb} requires a slightly stronger conditions than that of the global maximizer in Assumption \ref{Covar&ParamSpAssu}. The main distinction is the polynomial decay of the second order derivative of $K$ with respect to $\theta$. Note that the new condition on the second derivative of $K$ is not too restrictive. For instance the same analysis as Remark \ref{Rem3.1} validates this condition for all  covariance families introduced in Remark \ref{Rem3.1} (with a larger constant $C_{K,\Theta}$).
\end{rem}

\begin{thm}\label{LocMinRate}
Suppose that $\cc{D}_n$ admits Assumption \ref{ObsLocAssu} and Assumption \ref{Covar&ParamSpAssu2} holds for $\Omega$ and $K$. Then any stationary point of the optimization problem \eqref{InvFreeOptProb} satisfies
	
\begin{equation}\label{ConsistencyLocMaxEq}
\lim\limits_{n\rightarrow\infty}\Pr\paren{ \LpNorm{\hat{\theta}_n-\theta_0}{2}\vee \abs{\frac{\hat{\phi}_n}{\phi_0}-1} \geq C\sqrt{\frac{\ln n}{n}}}=0,
\end{equation}
for an appropriately chosen constant $C>0$ depending on $\cc{D}_n$, $\Omega$ and $K$.
\end{thm}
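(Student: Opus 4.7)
The plan is a classical Taylor-expansion argument around $\eta_0$. For any stationary point $\hat\eta_n$ of $F_n(Y,\cdot)$, the identity $\nabla_\eta F_n(Y,\hat\eta_n)=\zero$ combined with the mean value theorem yields
\begin{equation*}
-\nabla_\eta F_n(Y,\eta_0) \;=\; \bar H_n\,(\hat\eta_n-\eta_0),\qquad \bar H_n \coloneqq \int_0^1 \nabla^2_\eta F_n\paren{Y,\eta_0+t(\hat\eta_n-\eta_0)}\,dt,
\end{equation*}
so it suffices to prove (i) $\LpNorm{\nabla_\eta F_n(Y,\eta_0)}{2}=\cc{O}_{\bb{P}}(\sqrt{n^{-1}\ln n})$, and (ii) on a high-probability event, $\bar H_n$ is invertible with $\OpNorm{\bar H_n^{-1}}{2}{2}=\cc{O}_{\bb{P}}(1)$.

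For (i), each coordinate of $\nabla_\eta F_n(Y,\eta_0)$ is a centered quadratic form in the Gaussian vector $Y\sim\cc{N}(\zero,R_n(\eta_0))$ with weight matrix proportional to $\partial_{\eta_r}R_n(\eta_0)$. The polynomial decay of $K$ in $(A3)$ together with the minimum-distance property in Assumption \ref{ObsLocAssu} ensures, via the appendix results on polynomially-decaying matrices, that these weight matrices have Frobenius norm of order $\sqrt{n}$ and operator norm bounded uniformly in $n$. A Hanson--Wright-type concentration inequality plus a union bound over the $m+1$ coordinates then deliver (i); this reuses essentially the same toolkit as the proof of Theorem \ref{GlobMinRate}.

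The principal obstacle is (ii). Because $F_n$ need not be concave on $\Omega$, $\bar H_n$ could a priori be singular if $\hat\eta_n$ lies far from $\eta_0$. I would address this in two phases. First, extend the concentration of Step (i) to the Hessian, showing
\begin{equation*}
\sup_{\eta\in\Omega}\OpNorm{\nabla^2_\eta F_n(Y,\eta)-\bb{E}\,\nabla^2_\eta F_n(Y,\eta)}{2}{2}=\cc{O}_{\bb{P}}(\sqrt{n^{-1}\ln n});
\end{equation*}
the second-derivative decay in Assumption \ref{Covar&ParamSpAssu2} ($q=2$ in $(A3)$) is essential here, and a covering-number argument over the compact set $\Omega$ makes the bound uniform. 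A direct calculation gives the entrywise formula
\begin{equation*}
\paren{\bb{E}\,\nabla^2_\eta F_n(Y,\eta)}_{rs} = -\frac{1}{n}B_n(\eta)_{rs} + \frac{1}{n}\InnerProd{R_n(\eta_0)-R_n(\eta)}{\frac{\partial^2 R_n(\eta)}{\partial \eta_r\partial\eta_s}}{},
\end{equation*}
where $B_n(\eta)_{rs} \coloneqq \InnerProd{\partial_{\eta_r}R_n(\eta)}{\partial_{\eta_s}R_n(\eta)}{}$ is the Gram matrix of first-order derivatives and carries the identifying information. A preliminary consistency argument, analogous to the one underpinning Theorem \ref{GlobMinRate} but applied to the stationarity condition rather than to the maximizer inequality, gives $n^{-1/2}\LpNorm{R_n(\hat\eta_n)-R_n(\eta_0)}{2}=o_{\bb{P}}(1)$, which via the identifiability condition $(A2)$ translates to $\LpNorm{\hat\eta_n-\eta_0}{2}=o_{\bb{P}}(1)$. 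Once the segment $[\eta_0,\hat\eta_n]$ is confined to a small neighborhood of $\eta_0$, the second-order perturbation term in the Hessian is negligible and the Gram term $-n^{-1}B_n(\eta)$ dominates $\bar H_n$.

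The rate then follows because $\lambda_{\min}(n^{-1}B_n(\eta_0))$ is bounded away from zero uniformly in $n$---a Fisher-information-type identifiability that is a direct consequence of the perturbed regular lattice geometry in Assumption \ref{ObsLocAssu} and condition $(A2)$, and whose proof parallels the corresponding step for the MLE in \cite{F.Bachoc}. Plugging (i) and the eigenvalue lower bound from (ii) into the Taylor identity yields $\LpNorm{\hat\eta_n-\eta_0}{2}=\cc{O}_{\bb{P}}(\sqrt{n^{-1}\ln n})$; by $(A1)$, $\phi_{\min}>0$ and $\phi_{\max}<\infty$, so this is equivalent to the announced bound on $\LpNorm{\hat\theta_n-\theta_0}{2}\vee\abs{\hat\phi_n/\phi_0-1}$. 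The most delicate point, as emphasized, is the preliminary consistency step that confines \emph{all} stationary points (not just the global maximizer) to a shrinking neighborhood of $\eta_0$; this is precisely why Assumption \ref{Covar&ParamSpAssu2} strengthens the decay in $(A3)$ to $q=2$ beyond what Theorem \ref{GlobMinRate} required.
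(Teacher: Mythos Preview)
Your approach is genuinely different from the paper's, and it has a real gap at the preliminary consistency step. The paper does \emph{not} Taylor-expand or analyze the Hessian at all. Instead it works with the reduced objective $G_n(Y,\theta)=Z^\top H_{\theta_0,\theta}Z$ and shows directly that, with high probability, its gradient is nonzero on the entire set $\Theta^c_{\theta_0}(r_n)$ with $r_n=C\sqrt{n^{-1}\ln n}$. Concretely, for a fixed direction $\lambda\in\cc{S}^{m-1}$ it writes the directional derivative $W(\theta)=\sum_j\lambda_j\partial_{\theta_j}(Z^\top H_{\theta_0,\theta}Z)$, proves (Claim~1) a uniform concentration $\sup_{\theta\in\Theta}|W(\theta)-\bb{E}W(\theta)|\le C_0\sqrt{n\ln n}$ via the appendix proposition on quadratic forms, and then (Claim~2) shows $\inf_{\theta\in\Theta^c_{\theta_0}(r_n)}|\bb{E}W(\theta)|>C_0\sqrt{n\ln n}$ using the Frobenius-norm lower bound $\ff{D}_{\min}$ from $(A2)$. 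The two claims together give $\inf_{\theta\in\Theta^c_{\theta_0}(r_n)}|W(\theta)|>0$, hence no stationary point outside the ball. The rate and the localization come out in one stroke; there is no bootstrap from $o_{\bb{P}}(1)$ to the sharp rate.

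Your scheme needs first to confine \emph{every} stationary point to a shrinking neighborhood of $\eta_0$ before you can control $\bar H_n$, and you appeal to ``an argument analogous to the one underpinning Theorem~\ref{GlobMinRate} but applied to the stationarity condition''. This is where the proof breaks: Theorem~\ref{GlobMinRate}'s consistency argument rests entirely on the inequality $F_n(Y,\hat\eta_n)\ge F_n(Y,\eta_0)$, which is available for the global maximizer and \emph{not} for an arbitrary stationary point. The stationarity equations by themselves do not yield $n^{-1/2}\LpNorm{R_n(\hat\eta_n)-R_n(\eta_0)}{2}=o_{\bb{P}}(1)$ without an additional argument showing $\bb{E}\nabla_\eta F_n(Y,\eta)\ne 0$ for $\eta$ away from $\eta_0$; but that is exactly the content of the paper's Claim~2. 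Once you supply that step, the sharp rate follows immediately and your Hessian analysis becomes unnecessary. In short, the ``most delicate point'' you flag is not just delicate---it is the whole proof, and the paper addresses it directly rather than via a Taylor expansion.
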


Theorem \ref{LocMinRate} shows that any stationary point of $F_n$ is concentrated in a small neighborhood of $\paren{\phi_0,\theta_0}$, with high probability. In other words, $F_n\paren{Y,\phi,\theta}$ shows a similar behaviour as Figure \ref{Fig:Fig1} in the general case. In addition, the comparison between \eqref{ConsistencyGlobMaxEq} and \eqref{ConsistencyLocMaxEq} reveals that stationary points converge to $\paren{\phi_0,\theta_0}$ with the same rate as the global maximizer.

We conclude this section by illustrating how restrictive the identifiability assumption $\paren{A2}$ can be for the frequently used classes of the covariance functions. We first introduce a slightly stronger identifiability condition than $\paren{A2}$, which will be referred to as $\paren{A4}$. Note that a slightly modified version of $\paren{A4}$ has been first introduced in \cite{F.Bachoc} for studying the increasing domain asymptotics of the maximum likelihood and cross validation algorithms. That is to say, these identifiability conditions are not exclusive to ACS method and pop up in the asymptotic analysis of other algorithms.

\begin{prop}\label{IdentifProp}
$\paren{A2}$ is satisfied, whenever
\begin{enumerate}[label=(A\arabic*),leftmargin=*,start=4]
\item $\paren{a}$ There are positive scalars $r_2 > 1$ and $M_2$ such that for any $\eta\in\Omega$ and $\lambda\in\cc{S}^{m}$, 
\begin{equation*}
\min_{s\in\cc{D}_n}\max_{s'\in\cc{D}_n\paren{s,r_2}}\abs{ \sum\limits_{j=1}^{m+1} \lambda_j \frac{\partial}{\partial \eta_j} R\paren{s-s',\eta} } \geq M_2.
\end{equation*}
$\paren{b}$ The following inequality holds for any distinct pair of points $\eta_1,\eta_2\in\Omega$ 
\begin{equation*}
\min_{s\in\cc{D}_n}\max_{s'\in\cc{D}_n\paren{s,r_2}} \abs{ R\paren{s-s',\eta_2} - R\paren{s-s',\eta_1} } > 0.
\end{equation*}
\end{enumerate}
\end{prop}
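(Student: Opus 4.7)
The plan is to prove the implication by contradiction with the choice $r_1 = r_2$. Suppose (A2) fails; then for each $k \in \bb{N}$ there exist $\theta_1^{(k)}, \theta_2^{(k)} \in \Theta$ with $\theta_1^{(k)} \neq \theta_2^{(k)}$ and $s^{(k)} \in \cc{D}_{n_k}$ such that
$$\max_{s' \in \cc{D}_{n_k}\paren{s^{(k)}, r_2}} \abs{K\paren{s' - s^{(k)}, \theta_2^{(k)}} - K\paren{s' - s^{(k)}, \theta_1^{(k)}}} < \frac{1}{k}\LpNorm{\theta_2^{(k)} - \theta_1^{(k)}}{2}.$$
By compactness of $\Theta$ (from (A1)), the minimum inter-site gap $1 - 2\delta > 0$ from Assumption \ref{ObsLocAssu} (which uniformly bounds $\abs{\cc{D}_{n_k}\paren{s^{(k)}, r_2}}$ by a constant $C_{r_2, \delta, d}$), and compactness of the bounded box $\brac{-r_2, r_2}^d$, I pass to a subsequence along which $\theta_i^{(k)} \to \theta_i^* \in \Theta$ and the translated neighborhood $\cc{D}_{n_k}\paren{s^{(k)}, r_2} - s^{(k)}$ converges in Hausdorff distance to a non-empty finite set $\Delta^* \subset \bb{R}^d$.

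In the first regime $\theta_1^* = \theta_2^* =: \theta^*$, set $\epsilon_k = \LpNorm{\theta_2^{(k)} - \theta_1^{(k)}}{2} \to 0$, $\lambda^{(k)} = \epsilon_k^{-1}(\theta_2^{(k)} - \theta_1^{(k)})$, and extract $\lambda^{(k)} \to \lambda^* \in \cc{S}^{m-1}$. The mean value theorem applied to $K(\cdot, \theta)$ in $\theta$, combined with the failure inequality divided by $\epsilon_k$, yields
$$\max_{s' \in \cc{D}_{n_k}\paren{s^{(k)}, r_2}} \abs{\sum_{j=1}^{m} \lambda^{(k)}_j \frac{\partial K}{\partial \theta_j}\paren{s' - s^{(k)}, \tilde\theta_k(s')}} < \frac{1}{k},$$
for some $\tilde\theta_k(s')$ on the segment, and so $\tilde\theta_k(s') \to \theta^*$. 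I then apply (A4)(a) at $\eta = (\phi^*, \theta^*)$ with any fixed $\phi^* \in \cc{I}$ and unit direction $(0, \lambda^*) \in \cc{S}^m$; since $\partial_{\eta_{j+1}} R = \phi^* \partial_{\theta_j} K$, this yields the uniform lower bound $M_2/\phi^*$ for $\max_{s'}\abs{\sum_j \lambda^*_j \partial_{\theta_j} K\paren{s' - s^{(k)}, \theta^*}}$. Uniform continuity of $\partial_\theta K$ in $\theta$ (a consequence of (A3) with $q = 1$, which I take to be tacitly in force) allows $\lambda^{(k)} \to \lambda^*$ and $\tilde\theta_k \to \theta^*$ to be substituted in the preceding display with vanishing error, contradicting $M_2/\phi^* > 0$.

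In the second regime $\theta_1^* \neq \theta_2^*$, boundedness of $\epsilon_k$ on $\Theta$ plus uniform continuity of $K$ in $\theta$ force $\max_{z \in \cc{D}_{n_k}\paren{s^{(k)}, r_2} - s^{(k)}} \abs{K(z, \theta_2^*) - K(z, \theta_1^*)} \to 0$, so the Hausdorff limit gives $K(z, \theta_1^*) = K(z, \theta_2^*)$ for every $z \in \Delta^*$. The main obstacle of the proof lies here: to invoke (A4)(b) one must argue that the limit geometry $\Delta^*$ is itself realized as $\cc{D}'(s_0, r_2) - s_0$ for some admissible $\delta$-perturbed regular lattice $\cc{D}'$ and $s_0 \in \cc{D}'$. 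This is plausible because $\Delta^*$ is the Hausdorff limit of such local neighborhoods and the family of admissible local geometries is closed under such limits (by repicking the perturbations $p_i$ in Assumption \ref{ObsLocAssu}). Once this realization is secured, (A4)(b) applied to $\cc{D}'$ with $\eta_i = (\phi^*, \theta_i^*)$ forces $\max_{z \in \Delta^*} \abs{K(z, \theta_2^*) - K(z, \theta_1^*)} > 0$, contradicting the identity above. Strengthening (A4)(b) so that the positive lower bound is uniform over all admissible perturbed lattices would short-circuit this geometric step and close the argument directly from compactness.
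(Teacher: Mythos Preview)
Your approach differs from the paper's. The paper argues directly rather than by contradiction: it fixes a threshold $\delta>0$ and handles the regimes $\LpNorm{\theta_2-\theta_1}{2}\geq\delta$ and $\LpNorm{\theta_2-\theta_1}{2}<\delta$ separately. In the near regime it linearizes via the directional derivative and invokes (A4.a) along $(0,\lambda_0)\in\cc{S}^m$, which is exactly your first regime repackaged without subsequences. In the far regime it applies (A4.b) with $\eta_i=(\phi,\theta_i)$ for a fixed $\phi\in\cc{I}$ to obtain a positive quantity $C_{\Omega,K}=\phi_{\max}^{-1}\max_h|R(h,\eta_1)-R(h,\eta_2)|$, and sets $M=\delta^{-1}C_{\Omega,K}$. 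The direct route is shorter because it avoids your subsequence machinery and the Hausdorff limit of local geometries.

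The gap you flag in your second regime is real, and the paper's proof does not close it either: the constant $C_{\Omega,K}$ as defined there depends on the particular $\theta_1,\theta_2,s$, so for $M$ to be a valid universal constant one needs $\inf C_{\Omega,K}>0$ over all $(\theta_1,\theta_2)$ with $\LpNorm{\theta_2-\theta_1}{2}\geq\delta$, all $s\in\cc{D}_n$, and all $n$. This is exactly the uniform strengthening of (A4.b) you suggest. For fixed $n$ the issue is trivial: $\cc{D}_n$ is finite, so the min--max in (A4.b) is continuous in $(\theta_1,\theta_2)$ and attains a positive minimum on the compact set $\{\LpNorm{\theta_2-\theta_1}{2}\geq\delta\}$. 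Your Hausdorff-limit argument is only needed for uniformity in $n$, and there the idea is correct and can be completed: any limit $\Delta^*$ of the translated neighborhoods is realizable as $\cc{D}'(s_0,r_2)-s_0$ for some admissible $\delta$-perturbed lattice, since the limit perturbations still lie in $[-1,1]^d$ and one can embed the limiting local configuration around a lattice vertex $s_0$. The paper simply takes this uniformity for granted.
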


Clearly, $\paren{A4.b}$ is necessary for any algorithm consistently estimating $\eta$ and it can be verified for all typical classes of geometrical anisotropic covariance function. However, understanding the role and restrictiveness of $\paren{A4.a}$ is more subtle than that of $\paren{A4.b}$. 
Note that unlike $\paren{A3}$, all the introduced identifiability conditions not only depend on the choice of the covariance function but also to the observed locations $\cc{D}_n$. It may be excessive to seek the class of covariances satisfying $\paren{A4.a}$ for any perturbed lattice $\cc{D}_n$. So, a more pertinent question is: which class of covariance functions do almost surely satisfy $\paren{A4.a}$ for a randomly generated perturbed lattice? The following result responds to question by rigorously characterizing a broad subclass of the geometrically anisotropic covariances (as defined in Definition \ref{GeoAnisGPDefn}) fulfilling $\paren{A4.a}$. 

\begin{prop}\label{A5GeoAnisot}
Let $\ff{P}$ be a distribution in $\brac{-1,1}^d$ which is absolutely continuous with respect to the Lebesgue measure. Suppose that $R\paren{\cdot,\eta}:\bb{R}^d\mapsto\brapar{0,\infty}$ is a geometrically anisotropic covariance function with a known $\nu_0$ (if exists). Then, $\paren{A4}$ almost surely holds if
\begin{enumerate}[label = (\alph*)]
\item $K:\brapar{0,\infty}\mapsto\brapar{0,\infty}$ is a nonzero, differentiable and strictly decreasing function ($K$ may only have right derivative at zero).
\item $\cc{D}_n$ is a randomly generated $\delta-$perturbed lattice associated to $\ff{P}$. That is, $p_i$ are independent draws of $\ff{P}$ in Assumption \ref{ObsLocAssu}.
\end{enumerate}
\end{prop}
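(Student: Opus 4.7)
The plan is to reduce both halves of $(A4)$ to a single, parameter-free spanning property for rank-one matrices built from the perturbed displacements, and then to obtain the uniform positivity demanded by $(A4.a)$ by a compactness argument on $\Omega \times \cc{S}^m$. Writing $R\paren{t,\eta} = \phi K\paren{\sqrt{t^\top A t}}$ under the geometrically anisotropic parametrization $\eta = \paren{\phi,A}$, direct differentiation gives $\partial_\phi R\paren{t,\eta} = K\paren{\sqrt{t^\top A t}}$ and, for $t \neq 0$, $\partial_{A_{kl}} R\paren{t,\eta} = \phi K'\paren{\sqrt{t^\top A t}}\, t_k t_l / \sqrt{t^\top A t}$ (with the usual factor of two off-diagonal); at $t = 0$ one has $\nabla_\eta R\paren{0,\eta} = \paren{K(0),\zero}$. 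The hypotheses force $K(0) > 0$, $K > 0$ throughout $\brapar{0,\infty}$, and $K' \leq 0$ with $K'$ nonvanishing on a dense set; in the standard cases the stronger $K' < 0$ pointwise holds and is what I will use.

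The spanning lemma: choose $r_2$ large enough (depending only on $d$ and $\delta$) so that for every $s \in \cc{D}_n$ the set $\cc{D}_n\paren{s,r_2} \setminus \set{s}$ contains at least $K^\star := d\paren{d+1}/2$ points, with displacements $t_k = s - s'_k$. Conditional on the lattice indices of these neighbors, each $t_k$ equals a fixed lattice vector plus $\delta\paren{p_{i(s)} - p_{i(s'_k)}}$, so $\paren{t_1,\ldots,t_{K^\star}}$ has a joint Lebesgue density on $\paren{\bb{R}^d}^{K^\star}$ by absolute continuity of $\ff{P}$. In the $K^\star$-dimensional space of symmetric matrices, the determinant of $\brac{t_1 t_1^\top,\ldots,t_{K^\star} t_{K^\star}^\top}$ is a polynomial in the $t_k$ that is not identically zero, since the explicit configuration $\set{e_i e_i^\top:\; 1 \leq i \leq d} \cup \set{\paren{e_i+e_j}\paren{e_i+e_j}^\top:\; 1 \leq i < j \leq d}$ is a basis. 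Hence its zero set is Lebesgue-null, the event $\Xi_s := \set{\set{t_k t_k^\top}_k \text{ spans the symmetric matrices}}$ has probability one, and intersecting over $s \in \cc{D}_n$ yields a single almost-sure event $\Xi$ on which spanning holds at every center.

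On $\Xi$ both parts of $(A4)$ follow. For $(A4.b)$, take $\eta_1 \neq \eta_2$: if $\phi_1 \neq \phi_2$, taking $s' = s$ gives $\abs{R\paren{0,\eta_1} - R\paren{0,\eta_2}} = K(0)\abs{\phi_1-\phi_2} > 0$; otherwise $A_1 \neq A_2$ and spanning furnishes a $t_k$ with $t_k^\top\paren{A_2-A_1}t_k \neq 0$, so strict monotonicity of $K$ yields $R\paren{t_k,\eta_1} \neq R\paren{t_k,\eta_2}$. Crucially, the quantification over the uncountable family of distinct pairs $\paren{\eta_1,\eta_2}$ is not an obstacle because $\Xi$ does not depend on the parameters. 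For $(A4.a)$, fix $s$; on $\Xi$ the gradient vectors $\set{\nabla_\eta R\paren{s-s',\eta}:\;s'\in\cc{D}_n\paren{s,r_2}}$ span $\bb{R}^{m+1}$ for every $\eta$ (the vector at $s'=s$ pins down the $\phi$-direction, while spanning together with nonvanishing of $\phi K'\paren{\sqrt{t^\top A t}}/\sqrt{t^\top A t}$ supplies the $A$-directions). Consequently the continuous map $\paren{\eta,\lambda} \mapsto \max_{s' \in \cc{D}_n\paren{s,r_2}} \abs{\lambda^\top \nabla_\eta R\paren{s-s',\eta}}$ is strictly positive on the compact $\Omega \times \cc{S}^m$ and hence bounded below by some $M_2\paren{s} > 0$; setting $M_2 := \min_{s \in \cc{D}_n} M_2\paren{s}$ completes $(A4.a)$.

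The principal obstacle is the spanning lemma, which ultimately reduces to exhibiting an explicit basis of rank-one symmetric matrices in arbitrary dimension $d$; the construction above handles this. A secondary subtlety is ensuring that the coefficient $\phi K'\paren{\sqrt{t^\top A t}}/\sqrt{t^\top A t}$ does not vanish at every spanning $t_k$ for some adversarial $\eta$ --- this is immediate whenever $K'$ is strictly negative on $\brapar{0,\infty}$, which covers all standard correlation families, and in the fully general differentiable strictly-decreasing case can be arranged by enlarging $r_2$ so that the $t_k$ cannot all lie on any single level set of $\norm{\cdot}{A}^2$. The compactness step further requires the $\Omega$-continuity of $\nabla_\eta R$, which is guaranteed by Assumption \ref{Covar&ParamSpAssu}.
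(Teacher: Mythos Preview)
Your argument follows the paper's route: write the directional derivative as $J(h)=\lambda_\phi K(\|h\|_A)+\phi\,\frac{K'(\|h\|_A)}{2\|h\|_A}\,h^\top\Lambda h$, use $h=0$ to kill the $\phi$-coefficient, and reduce the $A$-part to showing that the rank-one matrices $\{h_k h_k^\top\}$ built from nearby displacements span, which holds almost surely by absolute continuity of $\ff{P}$.

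You are, however, more careful than the paper in several places, and these refinements are worth noting. First, you use the correct count $d(d+1)/2$ of displacements needed to span the symmetric matrices; the paper claims that $d$ linearly independent $h_k$ force $\Lambda=0$ from $h_k^\top\Lambda h_k=0$, which is false in general (e.g.\ $d=2$, $\Lambda=\bigl(\begin{smallmatrix}0&1\\1&0\end{smallmatrix}\bigr)$, $h_1=e_1$, $h_2=e_2$). Second, you explicitly supply the compactness step over $\Omega\times\cc{S}^m$ that upgrades pointwise nonvanishing of $J$ to the uniform constant $M_2>0$ that $(A4.a)$ actually requires; the paper stops at ``$J$ is not identically zero on $\cc{B}_r(\cc{D}_n)$.'' Third, you correctly flag that ``strictly decreasing and differentiable'' does not imply $K'<0$ everywhere, whereas the paper asserts it does; your workaround (either assume $K'<0$, as in all the standard families, or enlarge $r_2$) is adequate at the level of a sketch. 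Finally, isolating a single parameter-free almost-sure event $\Xi$ before quantifying over the uncountable family $(\eta,\lambda)$ is a cleaner way to organize the measure-zero bookkeeping than the paper's implicit treatment.
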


\begin{cor}
Let $\cc{D}_n$ be $d-$dimensional regular lattice (associated to $\delta=0$) and assume that $R\paren{\cdot,\eta}:\bb{R}^d\mapsto\brapar{0,\infty}$ is a geometrically anisotropic covariance function with known $\nu_0$. Then, $\paren{A4}$ holds if $K:\brapar{0,\infty}\mapsto\brapar{0,\infty}$ admits the condition $\paren{a}$ in Proposition \ref{A5GeoAnisot}.
\end{cor}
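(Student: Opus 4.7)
The strategy is to verify $\paren{A4}$ directly, since the measure-theoretic argument underlying Proposition \ref{A5GeoAnisot} is vacuous when $\delta=0$, and to exploit the rigid geometry of the regular lattice. The plan is to take $r_2 = \sqrt{d}$ (or $r_2 = 1+\varepsilon$ if $d=1$) and, assuming the non-degenerate case $N\geq 2$, to observe that for every $s = \paren{s_1,\ldots,s_d}\in\cc{V}_{N,d}$ there is a sign pattern $\epsilon\paren{s} = \paren{\epsilon_1,\ldots,\epsilon_d}\in\set{-1,+1}^d$ with $s_i + \epsilon_i\in\set{1,\ldots,N}$ for every $i$. Then $\cc{D}_n\paren{s,r_2}$ contains the $1+d+\binom{d}{2}$ shifts $s$, $s+\epsilon_i e_i\;\paren{1\leq i\leq d}$, and $s+\epsilon_i e_i+\epsilon_j e_j\;\paren{1\leq i<j\leq d}$, all at Euclidean distance at most $\sqrt{2}$. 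I would then argue that these offsets encode enough information to recover all $1+\binom{d+1}{2}$ parameters in $\eta=\paren{\phi,A}$ in a quantitatively stable manner.

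For $\paren{A4.b}$, evaluating $R\paren{h,\eta}=\phi K\paren{\sqrt{h^\top A h}}$ on the three offset families produces a triangular cascade. The offset $h=0$ forces $\phi_1=\phi_2$ since $K\paren{0}\neq 0$; then each $h=\epsilon_i e_i$ combined with the strict monotonicity (hence injectivity) of $K$ yields $a^{\paren{1}}_{ii} = a^{\paren{2}}_{ii}$; finally each $h=\epsilon_i e_i+\epsilon_j e_j$ gives $a^{\paren{1}}_{ij} = a^{\paren{2}}_{ij}$. Hence $R\paren{h,\eta_1} = R\paren{h,\eta_2}$ across the offset set forces $\eta_1=\eta_2$, so the required maximum is strictly positive for any distinct pair.

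For $\paren{A4.a}$, set $V\paren{h} := \sum_j \lambda_j\frac{\partial}{\partial\eta_j}R\paren{h,\eta}$ for $\lambda\in\cc{S}^m$ and $\eta=\paren{\phi,A}\in\Omega$; using $\frac{\partial}{\partial a_{kk}}\paren{h^\top A h} = h_k^2$ and $\frac{\partial}{\partial a_{kl}}\paren{h^\top A h}=2h_k h_l$ for $k<l$, direct computation gives, with $\rho_{ij}\coloneqq\sqrt{a_{ii}+a_{jj}+2\epsilon_i\epsilon_j a_{ij}}$,
\begin{align*}
V\paren{0} &= \lambda_0 K\paren{0},\\
V\paren{\epsilon_i e_i} &= \lambda_0 K\paren{\sqrt{a_{ii}}} + \lambda_{ii}\frac{\phi K'\paren{\sqrt{a_{ii}}}}{2\sqrt{a_{ii}}},\\
V\paren{\epsilon_i e_i+\epsilon_j e_j} &= \lambda_0 K\paren{\rho_{ij}} + \paren{\lambda_{ii}+\lambda_{jj}}\frac{\phi K'\paren{\rho_{ij}}}{2\rho_{ij}} + \epsilon_i\epsilon_j\lambda_{ij}\frac{\phi K'\paren{\rho_{ij}}}{\rho_{ij}}.
\end{align*}
The same triangular cascade, using $K\paren{0}>0$ together with $K'\paren{\rho}\neq 0$ on the compact range of positive arguments induced by $A\in\Theta$ (which on $\Theta$ is a compact subset of the positive-definite cone bounded away from degeneracy), sequentially forces $\lambda_0=0$, then $\lambda_{ii}=0$ for each $i$, then $\lambda_{ij}=0$ for each $i<j$, contradicting $\lambda\in\cc{S}^m$. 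Therefore $\paren{\lambda,\eta}\mapsto \max_h \abs{V\paren{h}}$ is continuous and strictly positive on the compact set $\cc{S}^m\times\Omega$, so has a positive infimum for each fixed sign pattern $\epsilon\paren{s}$; since $\epsilon\paren{s}$ ranges over at most $2^d$ elements of $\set{-1,+1}^d$ as $s$ varies over $\cc{V}_{N,d}$, taking the minimum of finitely many positive continuous functionals yields the uniform $M_2>0$ independent of $n$.

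The technical delicacy is the non-vanishing of $K'$ on the compact set of arguments produced by $A\in\Theta$: strict decreasingness and differentiability only give $K'\leq 0$, so in principle $K'$ could vanish at isolated points and wreck the injectivity in the paragraph above. However, every covariance family listed in Remark \ref{Rem3.1} satisfies $K'<0$ throughout $\paren{0,\infty}$, which is the natural setting intended by the hypothesis; once this is confirmed the triangular cascade plus the compactness/continuity argument closes both $\paren{A4.a}$ and $\paren{A4.b}$ and completes the proof.
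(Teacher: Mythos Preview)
Your argument is correct and follows the same core mechanism as the paper's proof of Proposition~\ref{A5GeoAnisot} (which the corollary tacitly invokes): evaluate the directional derivative $J$ at $h=0$ to kill the $\phi$--coefficient, then use $K'\ne 0$ to reduce to $h^\top\Lambda h=0$ on a collection of lattice offsets rich enough to force $\Lambda=0$. The only difference is that on the regular lattice the offsets $e_i$ and $e_i\pm e_j$ are available deterministically, so the ``almost surely'' clause disappears; the paper leaves this adaptation implicit and gives no separate proof of the corollary.

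Your write--up is in fact more careful than the paper on two points worth noting. First, you handle the lattice boundary explicitly via the sign pattern $\epsilon(s)$, ensuring that for \emph{every} $s\in\cc{D}_n$ the required neighbours lie in $\cc{D}_n(s,r_2)$; this is needed for the $\min_s$ in $(A4.a)$ and is glossed over in the paper's use of the global difference set $\cc{B}_r(\cc{D}_n)$. Second, you correctly flag that ``differentiable and strictly decreasing'' only yields $K'\le 0$, not $K'<0$; the paper's line ``we have $K'(u)\ne 0$ for any strictly positive $u$'' in the proof of Proposition~\ref{A5GeoAnisot} is a small overreach, and your resolution (restricting to the concrete families of Remark~\ref{Rem3.1}, all of which satisfy $K'<0$ on $(0,\infty)$) is the honest fix. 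The compactness argument you give for extracting a uniform $M_2>0$ is also a detail the paper omits.
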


Although the conditions of Proposition \ref{A5GeoAnisot} trivially hold for the non-compactly supported covariance function introduced in Remark \ref{Rem3.1}, deploying analogous proof techniques can lead to a similar result for compactly supported covariance function.

\begin{prop}\label{A5GeoAnisot2}
Suppose that $\ff{P}$, $R\paren{\cdot,\eta}$ and $\cc{D}_n$ satisfy the same conditions as Proposition \ref{A5GeoAnisot}. Then, $\paren{A4}$ almost surely holds if there exists a large enough positive scalar $r_0$ for which
\begin{itemize}
\item $K:\brapar{0,\infty}\mapsto\brapar{0,\infty}$ is a nonzero, differentiable and strictly decreasing function in the interval $\brac{0,r_0}$ and $K\paren{r} = 0$ for any $r>r_0$.
\end{itemize}
\end{prop}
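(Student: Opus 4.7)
The plan is to reduce to Proposition \ref{A5GeoAnisot} by observing that its proof invokes only values of $K$ and $K'$ at a bounded range of arguments, so compactly supporting $K$ on a sufficiently large interval leaves the argument intact. Concretely, the only differences $s-s'$ appearing in $(A4)$ satisfy $\LpNorm{s-s'}{2} \leq r_2$, and since $\OpNorm{A}{2}{2} \leq \Lambda_{\max,\Theta}$ uniformly over $A \in \Theta$ by \eqref{EigValsUppLwBnd}, one has $\sqrt{(s-s')^\top A(s-s')} \leq r_2\sqrt{\Lambda_{\max,\Theta}}$. Assumption \ref{ObsLocAssu} also guarantees $\LpNorm{s-s'}{2} \geq 1-2\delta > 0$ for distinct lattice points, so these arguments are uniformly bounded away from $0$, where the right-derivative of $K$ is well-defined by hypothesis.

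Given this, I would first fix the radius $r_2 > 1$ selected in the proof of Proposition \ref{A5GeoAnisot} (it depends only on $d$, $\delta$, and $\Theta$) and then choose $r_0 > r_2\sqrt{\Lambda_{\max,\Theta}}$. On the window $[(1-2\delta)\sqrt{\Lambda_{\min,\Theta}},\ r_2\sqrt{\Lambda_{\max,\Theta}}]$, the compactly supported $K$ is nonzero, strictly decreasing, and differentiable, exactly as required by Proposition \ref{A5GeoAnisot}. The remainder of the argument then transfers verbatim: for each fixed $(\eta,\lambda) \in \Omega \times \cc{S}^m$, the set of perturbation vectors $\paren{p_i}_{i=1}^n$ that make the inner maximum in $(A4.a)$ or $(A4.b)$ vanish forms a Lebesgue-null subset of $\brac{-1,1}^{dn}$, using absolute continuity of $\ff{P}$ together with strict monotonicity and nonvanishing of $K'$ on the relevant range. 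A countable dense subset of $\Omega \times \cc{S}^m$ combined with continuity of the relevant functionals in $(\eta,\lambda)$ extends the null-event conclusion uniformly over $\Omega \times \cc{S}^m$.

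The only substantive obstacle is to audit the proof of Proposition \ref{A5GeoAnisot} and confirm that every evaluation of $K$ and $K'$ that it uses is indeed confined to the window above. Because all pairs $(s,s')$ entering the identifiability identities of $(A4)$ are drawn from $\cc{D}_n(s,r_2)$ and the geometric anisotropy is two-sidedly controlled by \eqref{EigValsUppLwBnd}, this confinement is immediate once $r_0 > r_2\sqrt{\Lambda_{\max,\Theta}}$. No additional arguments are needed; the tail behavior of $K$ beyond $r_0$ simply does not enter the identifiability calculation, which is precisely the content of the phrase ``large enough'' in the statement.
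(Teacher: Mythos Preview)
Your proposal is correct and matches the paper's approach: the paper does not give a separate proof of this proposition but simply remarks that ``deploying analogous proof techniques can lead to a similar result for compactly supported covariance function,'' and your reduction---observing that the proof of Proposition~\ref{A5GeoAnisot} only evaluates $K$ and $K'$ at arguments bounded by $r_2\sqrt{\Lambda_{\max,\Theta}}$, so that taking $r_0$ beyond this threshold makes the argument transfer verbatim---is precisely the intended one. Your explicit handling of uniformity over $\Omega\times\cc{S}^m$ via a countable dense subset and continuity is in fact more careful than what the paper spells out for Proposition~\ref{A5GeoAnisot}.
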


Although the required conditions on the covariance function's formulation, in Propositions \ref{A5GeoAnisot} and \ref{A5GeoAnisot2}, are very minimal, we assume that the fractal index $\nu_0$ (if exists) is fully known. However in the following result, $\nu_0$ is one of the unknown parameters to be estimated. Here the central emphasis is on the powered exponential and rational quadratic classes, as their partial derivative with respect to the fractal index have a somewhat simple closed form that can be handled without great difficulty. 

\begin{prop}\label{A5GeoAnisotUnknwnNu}
Let $\ff{P}$ be a distribution in $\brac{-1,1}^d$ which is absolutely continuous with respect to the Lebesgue measure. Suppose that $R\paren{\cdot,\eta}:\bb{R}^d\mapsto\brapar{0,\infty}$ is a geometrically anisotropic covariance function. Then, $\paren{A4}$ almost surely holds if
\begin{enumerate}[label = (\alph*)]
\item $K:\brapar{0,\infty}\mapsto\brapar{0,\infty}$ is either a powered exponential or rational quadratic covariance functions in Remark \ref{Rem3.1} with unknown $\nu_0 >0$.
\item $\cc{D}_n$ is a randomly generated $\delta-$perturbed lattice associated to $\ff{P}$. That is, $p_i$ are independent draws of $\ff{P}$ in Assumption \ref{ObsLocAssu}.
\end{enumerate}
\end{prop}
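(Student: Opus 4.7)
The plan is to follow the strategy of Proposition~\ref{A5GeoAnisot}, the novelty being the treatment of the additional partial derivative with respect to the unknown fractal index $\nu_0$. Condition $\paren{A4.b}$ is the easier half: for both the powered exponential and rational quadratic families, $K\paren{r}$ is strictly monotone in $r$ uniformly in $\nu>0$, so the argument from Proposition~\ref{A5GeoAnisot} transfers almost verbatim. I focus on $\paren{A4.a}$.

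Set $f_{\eta,\lambda}\paren{u}\coloneqq\sum_{j=1}^{m+1}\lambda_j\,\partial_{\eta_j}R\paren{u,\eta}$ and parametrize $\eta=\paren{\phi,\{A_{ij}\}_{i\le j},\nu}$, $r=\sqrt{u^\top A u}$. The partial derivatives split into three kinds: $\partial_\phi R=K\paren{r}$; the $A_{ij}$-derivatives yield $\phi K'\paren{r}/\paren{2r}$ times a quadratic-in-$u$ factor; and $\partial_\nu R=\phi\,g\paren{r}$, where $g\paren{r}=-r^\nu\ln r\cdot e^{-r^\nu}$ for powered exponential or $g\paren{r}=-\ln\paren{1+r^2}\paren{1+r^2}^{-\paren{d/2+\nu}}$ for rational quadratic. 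I would first establish a deterministic rigidity lemma: for each $\paren{\eta,\lambda}\in\Omega\times\cc{S}^m$ with $\lambda\neq 0$, the function $f_{\eta,\lambda}$ is real analytic on $\bb{R}^d\setminus\{0\}$ and not identically zero. Decomposing $f_{\eta,\lambda}$ into the quadratic-in-$u$ matrix part (from the $A_{ij}$-derivatives) and the purely radial part (from the $\phi$- and $\nu$-derivatives), the matrix part forces the symmetric coefficient matrix $\tilde{\Lambda}$ to vanish since $K'\paren{r}/r\not\equiv 0$; the residual radial identity $\lambda_\phi K\paren{r}+\phi\lambda_\nu g\paren{r}\equiv 0$ on $\paren{0,\infty}$ then yields $\lambda_\phi=\lambda_\nu=0$ because the ratio $g/K$ equals $-r^\nu\ln r$ or $-\ln\paren{1+r^2}$, neither constant in $r$.

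With the rigidity lemma in hand, I would translate to the almost sure statement. For fixed $s_i\in\cc{D}_n$, the bad event $B_{s_i}\coloneqq\{\omega:\exists\paren{\eta,\lambda}\in\Omega\times\cc{S}^m\text{ with }f_{\eta,\lambda}\paren{s_i-s_j}=0\;\forall s_j\in\cc{D}_n\paren{s_i,r_2}\}$ is the $\omega$-projection of an analytic subvariety of $\Omega\times\cc{S}^m\times\brac{-1,1}^{d\cdot|\cc{D}_n\paren{s_i,r_2}|}$. The rigidity lemma implies that each additional random neighbor $s_j$ contributes a constraint whose gradient in $\paren{\eta,\lambda}$ is generically linearly independent from the previous ones; taking $r_2$ large enough that $|\cc{D}_n\paren{s_i,r_2}|$ strictly exceeds $\dim\paren{\Omega\times\cc{S}^m}=2m+1$, this subvariety has codimension larger than $\dim\paren{\Omega\times\cc{S}^m}$ in the product, hence its $\omega$-projection has Lebesgue measure zero by absolute continuity of $\ff{P}$. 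Union bounds over $s_i\in\cc{D}_n$ and over $n$ deliver the almost sure statement, and the constant $M_2$ is realized as the strictly positive infimum (attained by compactness and continuity) of $\paren{\eta,\lambda}\mapsto\min_{s_i}\max_{s_j}\abs{f_{\eta,\lambda}\paren{s_i-s_j}}$ on $\Omega\times\cc{S}^m$.

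The main obstacle will be rigorously executing the \emph{generic independence of constraints} step, i.e., verifying that the Jacobian of the constraint system with respect to $\paren{\eta,\lambda}$ has full rank at every point of the analytic set for almost every perturbation configuration. The rigidity lemma ensures the Jacobian is not identically zero as a function of the perturbation coordinates, and absolute continuity of $\ff{P}$ then confines the degenerate configurations to a Lebesgue-null set, but one must implement this carefully because the codimension counting is most naturally carried out after passing to a suitable parameterization of the variety. A secondary minor point is confirming that $g/K$ is non-constant uniformly in $\nu$ over its compact range, which reduces for both families to a short calculation showing $\frac{d}{dr}\brac{g\paren{r}/K\paren{r}}$ is bounded away from zero on a suitable range of $r$.
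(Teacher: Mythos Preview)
Your strategy is sound in outline, and the rigidity-plus-genericity framework would produce a complete argument, but there is a slip in the decomposition and the machinery is considerably heavier than what the paper actually uses.

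\textbf{The decomposition flaw.} When you write that ``the matrix part forces the symmetric coefficient matrix $\tilde{\Lambda}$ to vanish since $K'\paren{r}/r\not\equiv 0$'', this is not quite right. Fixing $r=\norm{u}{A}$ and varying $u$ over the ellipsoid $\set{u:u^\top A u=r^2}$ only forces $u^\top\tilde{\Lambda}u$ to be constant on that ellipsoid, which (for $d\geq 2$) gives $\tilde{\Lambda}=cA$ for some scalar $c$, not $\tilde{\Lambda}=0$. The residual radial identity is then the \emph{three}-term relation $\lambda_{\phi}K\paren{r}+\phi\lambda_{\nu}g\paren{r}+\tfrac{\phi c}{2}rK'\paren{r}\equiv 0$, not the two-term one you state. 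This three-term identity still forces all coefficients to vanish (for both families, examine the behaviour as $r\to 0^+$ after dividing by $K\paren{r}$), so the rigidity lemma survives, but the argument needs to be amended.

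\textbf{Comparison with the paper.} The paper avoids the analytic-variety and codimension-counting machinery entirely. It simply writes out $J\paren{h}=\sum_j\lambda_j\partial_{\eta_j}R\paren{h,\eta}$ explicitly (e.g.\ for rational quadratic, $J\paren{h}=\phi K_{\nu}\paren{\norm{h}{A}}\{\lambda_1/\phi-\lambda_2\ln\paren{1+\norm{h}{A}^2}-\tfrac{d/2+\nu}{1+\norm{h}{A}^2}h^\top\Lambda h\}$), evaluates at $h=\zero_d$ to obtain $\lambda_1=0$, and then splits on whether $\lambda_2$ (the $\nu$-coefficient) is zero. If $\lambda_2=0$, the problem reduces verbatim to Proposition~\ref{A5GeoAnisot}. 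If $\lambda_2\neq 0$, one is left with a single explicit relation such as $\paren{1+\norm{h}{A}^2}\ln\paren{1+\norm{h}{A}^2}=C\,h^\top\Lambda h$ holding on $\cc{B}_r\paren{\cc{D}_n}$, which is dismissed as almost surely impossible for an absolutely continuous point configuration once $r$ is large enough. This elementary case split replaces your entire Jacobian-rank and projection argument. What your approach buys is a more systematic handling of the uniformity in $\paren{\eta,\lambda}$ and of the positivity of $M_2$; the paper's ``almost surely absurd'' step is terse on exactly that point. But the paper's route is much shorter and requires no appeal to properties of analytic varieties.
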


\begin{rem}
A prudent look at the proof of Proposition \ref{A5GeoAnisotUnknwnNu} reveals that the following property (which is satisfied by the powered exponential and rational quadratic families) has the crucial role.
\begin{equation}\label{CrucLocProperty}
\frac{\partial K}{\partial \nu} \Big\lvert_{\set{ r = \zero_d, \;\theta}} = 0,\quad\;\forall\; \theta\in\Theta.
\end{equation}
For the Matern class, not only $\frac{\partial K}{\partial \nu}$ not satisfy \eqref{CrucLocProperty}, it does not have a tractable algebraic form. We believe that $\paren{A4}$ holds true for the geometric anisotropic Matern family with unknown $\nu$, even though it is beyond the reach of our current proof technique.
\end{rem} 

\subsection{Minimax optimality and Asymptotic normality}\label{MinMaxOptimandAsympNorm}

Now, we further investigate the asymptotic statistical properties of ACS's algorithm. Near minimax optimality and asymptotic normality are respectively presented in Theorems \ref{MinMaxThm} and \ref{AsympDistThm}.

\begin{thm}\label{MinMaxThm}
Suppose that Assumptions \ref{ObsLocAssu} and \ref{Covar&ParamSpAssu} hold for $\cc{D}_n$, $\Omega$ and $K$. Then there exist $n_0\in\bb{N}$ and a bounded scalar $C>0$ such that
\begin{equation*}
\sup_{\eta_0\in\Omega} \Pr\paren{\LpNorm{\hat{\eta}_n-\eta_0}{2}\geq \frac{C}{\sqrt{n}}}\geq \frac{1}{4},
\end{equation*}
for any estimator $\hat{\eta}_n$ and any $n\geq n_0$.
\end{thm}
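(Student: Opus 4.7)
The plan is a standard minimax lower bound via Le Cam's two-point method, exploiting the fact that under the increasing-domain, polynomial-decay hypotheses the effective Fisher information scales linearly in $n$. Concretely, for every sufficiently large $n$ I will exhibit a pair $\eta_1,\eta_2\in\Omega$ at Euclidean distance $c_0/\sqrt{n}$ whose induced zero-mean Gaussian laws on $Y$ have Kullback--Leibler divergence at most $1/8$, and then invoke Pinsker to show that no estimator can correctly tell them apart with probability better than $3/4$. Because $(A1)$ makes $\Omega$ a compact $(m{+}1)$-dimensional subset of $\bb{R}^{m+1}$, it has nonempty interior; fix any interior point $\eta_\star$ and any unit vector $u\in\cc{S}^m$, and set $\eta_1,\eta_2\coloneqq\eta_\star\pm\tfrac{c_0}{2\sqrt{n}}u$. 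Both points lie in $\Omega$ once $n\ge n_0$ and $c_0$ is small.

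Write $\bb{P}_\eta\coloneqq\cc{N}(\zero_n,R_n(\eta))$ and $L_n\coloneqq R_n(\eta_2)^{-1/2}R_n(\eta_1)R_n(\eta_2)^{-1/2}$ with eigenvalues $\lambda_1,\ldots,\lambda_n$. The Gaussian KL formula gives
\[
2\,D\paren{\bb{P}_{\eta_1}\parallel\bb{P}_{\eta_2}} \;=\; \sum_{i=1}^{n}\bigl(\lambda_i-1-\log\lambda_i\bigr).
\]
Under $(A3)$ and the perturbed-lattice sampling of Assumption \ref{ObsLocAssu}, the auxiliary results in the Appendix furnish a uniform two-sided spectral bound on $R_n(\eta)$ across $\eta\in\Omega$ and $n$, so the $\lambda_i$ lie in a compact subinterval of $(0,\infty)$ on which $x-1-\log x\le C_1(x-1)^2$. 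Consequently the KL reduces to a Frobenius-norm estimate: $D\paren{\bb{P}_{\eta_1}\parallel\bb{P}_{\eta_2}} \le C_2\,\LpNorm{R_n(\eta_1)-R_n(\eta_2)}{2}^2$.

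The Frobenius bound is controlled entrywise. By $(A3)$ with $q=1$ and the mean value theorem,
\[
\abs{R(s_i-s_j,\eta_1)-R(s_i-s_j,\eta_2)} \;\le\; \frac{C_{K,\Omega}\,\LpNorm{\eta_1-\eta_2}{2}}{1+\LpNorm{s_i-s_j}{2}^{d+1}},
\]
and the minimum spacing $1-2\delta$ of the perturbed lattice renders $\sum_{j}(1+\LpNorm{s_i-s_j}{2}^{d+1})^{-2}$ uniformly bounded in $i$ and $n$ (the exponent $2d{+}2$ exceeds $d$). Summing over $i$ yields $\LpNorm{R_n(\eta_1)-R_n(\eta_2)}{2}^2\le C_3\, n\LpNorm{\eta_1-\eta_2}{2}^2=C_3\, c_0^2$, so $D\le C_4 c_0^2\le 1/8$ for small enough $c_0$. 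The standard two-point (Le Cam) argument applied to the disjoint events $A_i\coloneqq\set{\LpNorm{\hat\eta_n-\eta_i}{2}<c_0/(2\sqrt n)}$, together with Pinsker's inequality $\text{TV}\le\sqrt{D/2}$, then gives
\[
\sup_{\eta_0\in\Omega}\Pr_{\eta_0}\!\Bigl(\LpNorm{\hat\eta_n-\eta_0}{2}\ge \tfrac{c_0}{2\sqrt{n}}\Bigr) \;\ge\; \max_{i=1,2}\bb{P}_{\eta_i}(A_i^c) \;\ge\; \tfrac{1}{2}\bigl(1-\sqrt{D/2}\bigr) \;\ge\; \tfrac{1}{4},
\]
which is the theorem with $C=c_0/2$.

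The main obstacle is the $n$- and $\eta$-uniform lower bound on $\lambda_{\min}\paren{R_n(\eta)}$ that collapses the KL to a Frobenius estimate in the second paragraph. The matching upper bound on the spectrum follows immediately from Gershgorin together with $(A3)$; pinning the smallest eigenvalue away from zero uniformly requires marrying the polynomial off-diagonal decay in $(A3)$ with the strictly positive minimum spacing $1-2\delta$ of the perturbed lattice, which is precisely the purpose of the Appendix on large covariance matrices with polynomially decaying off-diagonal entries and so should be a clean invocation rather than new work.
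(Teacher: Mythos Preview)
Your proposal is correct and follows essentially the same route as the paper: a two-point Le Cam argument with a KL bound of order $n\LpNorm{\eta_1-\eta_2}{2}^2$, obtained from the uniform eigenvalue control and the Lipschitz/decay estimates of the Appendix (the paper packages the KL step as Proposition~\ref{KLDivgncLem} and cites Tsybakov's Theorem~2.2 rather than writing Pinsker explicitly). The only cosmetic differences are that the paper perturbs in $\Theta$ alone, using connectedness to locate the pair and then invoking $\LpNorm{\hat\eta_n-\eta_0}{2}\ge\LpNorm{\hat\theta_n-\theta_0}{2}$, whereas you perturb in $\Omega$ via an interior point, and the paper's KL bound goes through the operator norm of $K_n(\theta_1)-K_n(\theta_2)$ (via Weyl) rather than the Frobenius norm; both yield the same $n^{-1/2}$ separation.
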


Theorem \ref{MinMaxThm} reveals that the established bounds in Theorems \ref{GlobMinRate} and \ref{LocMinRate} are sharp up to order $\sqrt{\ln n}$. This means that for the perturbed regular lattice sampling scheme, no algorithm can achieve a significantly better rate than the one considered in this paper.

\begin{thm}\label{AsympDistThm}
Suppose that $\cc{D}_n$ is a perturbed lattice introduced in Assumption \ref{ObsLocAssu}. Furthermore, $\paren{A1}$, $\paren{A3}$ with $q=3$ and $\paren{A4}$ are fulfilled by $\Omega$ and $R$. There is a positive definite matrix $\Sigma\in \bb{R}^{\paren{m+1}\times \paren{m+1}}$ with bounded operator norm such that
\begin{equation}\label{AsympNormEq}
\sqrt{n}\paren{\hat{\eta}_n-\eta_0} \cp{d} \cc{N}\paren{\zero_{m+1},\Sigma}. 
\end{equation}
\end{thm}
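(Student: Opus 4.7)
We follow the classical Taylor-expansion strategy for M-estimators. By Theorem \ref{LocMinRate}, with probability tending to one $\hat\eta_n$ lies in the interior of $\Omega$ and satisfies $\nabla_\eta F_n(Y,\hat\eta_n)=\zero_{m+1}$. Expanding each coordinate of $\nabla_\eta F_n(Y,\cdot)$ around $\eta_0$,
\begin{equation*}
\zero_{m+1}\;=\;\nabla_\eta F_n(Y,\eta_0)\;+\;\nabla^2_\eta F_n(Y,\tilde\eta_n)(\hat\eta_n-\eta_0),
\end{equation*}
for some intermediate $\tilde\eta_n$ on the segment $[\eta_0,\hat\eta_n]$. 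Once the Hessian is invertible with high probability,
\begin{equation*}
\sqrt{n}(\hat\eta_n-\eta_0)\;=\;-\bigl[\nabla^2_\eta F_n(Y,\tilde\eta_n)\bigr]^{-1}\sqrt{n}\,\nabla_\eta F_n(Y,\eta_0),
\end{equation*}
so the proof reduces to a multivariate CLT for the score $\sqrt{n}\,\nabla_\eta F_n(Y,\eta_0)$ and a probability limit for the Hessian at $\tilde\eta_n$.

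Writing $A_{n,j}=\partial_{\eta_j}R_n(\eta_0)$, the $j$-th score coordinate is
\begin{equation*}
\sqrt{n}\,\partial_{\eta_j}F_n(Y,\eta_0)\;=\;\frac{1}{\sqrt n}\Bigparen{Y^\top A_{n,j}Y-\tr\paren{R_n(\eta_0)A_{n,j}}},
\end{equation*}
a centered Gaussian quadratic form (confirming Remark \ref{Rem2.1}). Diagonalising $R_n^{1/2}(\eta_0)A_{n,j}R_n^{1/2}(\eta_0)$ expresses each component as a weighted sum of independent $\chi^2_1-1$ variables. Assumption $\paren{A3}$ with $q=3$ together with the polynomial off-diagonal-decay lemmas of the Appendix give uniformly bounded operator norms for $R_n(\eta_0)$ and each $A_{n,j}$, so the weights are $\cc{O}(1)$ while their squared sum grows as $\Theta(n)$, verifying Lyapunov's condition. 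Applying the Cram\'er--Wold device to arbitrary linear combinations,
\begin{equation*}
\sqrt{n}\,\nabla_\eta F_n(Y,\eta_0)\;\cp{d}\;\cc{N}(\zero_{m+1},V),\qquad V_{jk}=\lim_{n\to\infty}\tfrac{2}{n}\tr\bigl(R_n(\eta_0)A_{n,j}R_n(\eta_0)A_{n,k}\bigr),
\end{equation*}
where the limit is shown to exist by combining the near-periodicity of the $\delta$-perturbed lattice with the polynomial decay of $R$ and its derivatives.

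A second differentiation, together with $\bb{E}[Y^\top BY]=\tr(R_n(\eta_0)B)$, gives $\bb{E}\,\nabla^2_\eta F_n(Y,\eta_0)=-\Gamma_n$ with $(\Gamma_n)_{jk}=n^{-1}\InnerProd{A_{n,j}}{A_{n,k}}$, so that for $\lambda\in\cc{S}^m$, $\lambda^\top\Gamma_n\lambda=n^{-1}\LpNorm{\sum_j\lambda_jA_{n,j}}{2}^2$. Applying the identifiability condition $\paren{A4}.(a)$ site-by-site, every $s\in\cc{D}_n$ contributes at least $M_2^2$ via a neighbour in $\cc{D}_n(s,r_2)$, yielding a uniform lower bound on $\cc{S}^m$, while $\paren{A3}$ supplies the matching upper bound. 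Hence $\Gamma_n\to\Gamma\succ 0$. Gaussian-quadratic-form concentration (Appendix) yields $\nabla^2_\eta F_n(Y,\eta_0)=-\Gamma_n+o_{\bb P}(1)$, and $\paren{A3}$ with $q=3$ makes $\eta\mapsto\nabla^2_\eta F_n(Y,\eta)$ uniformly Lipschitz in $n$ on a neighbourhood of $\eta_0$; combined with $\tilde\eta_n\to\eta_0$ from Theorem \ref{LocMinRate}, this gives $\nabla^2_\eta F_n(Y,\tilde\eta_n)\cp{\bb{P}}-\Gamma$. Slutsky then yields \eqref{AsympNormEq} with $\Sigma=\Gamma^{-1}V\Gamma^{-1}$. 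The main obstacle is converting the polynomial off-diagonal decay of $\paren{A3}$ into the quantitative operator- and Frobenius-norm control required both for Lyapunov's condition and, more delicately, for the genuine (not merely subsequential) existence of $V$ and $\Gamma$; the symmetric perturbed-lattice structure reduces the latter to Ces\`aro-type averages of an absolutely convergent translation-invariant kernel, but making this precise in the $\delta>0$ case is the most technical step.
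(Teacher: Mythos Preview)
Your proposal is correct and follows essentially the same route as the paper: Taylor-expand the score around $\eta_0$, establish a CLT for $\sqrt{n}\,\nabla_\eta F_n(Y,\eta_0)$ via Cram\'er--Wold applied to centered Gaussian quadratic forms (the paper uses a moment-generating-function argument equivalent to your Lyapunov check), show the Hessian converges in probability to a positive-definite limit using $\paren{A4.a}$ for the lower bound and $\paren{A3}$ for the upper bound, and handle the remainder via the third-derivative control in $\paren{A3}$ with $q=3$, arriving at $\Sigma=\Gamma^{-1}V\Gamma^{-1}$. The only cosmetic difference is that the paper writes a second-order expansion with an explicit remainder evaluated at coordinate-dependent intermediate points $z_j$, whereas you use a first-order mean-value expansion and a Lipschitz argument on the Hessian; both reduce to the same estimates.
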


The exact formulation of $\Sigma$ has been omitted in this section due to its complicated algebraic form. We refer the reader to the proof of Theorem \ref{AsympDistThm} in Section \ref{Proofs} for further details. It is worthwhile to mention that the entries of $\Sigma$ heavily depend on the configuration of points in $\cc{D}_n$, which is a major disparity between fixed and increasing domain asymptotics. Comparing Theorems \ref{LocMinRate} and \ref{AsympDistThm}, here we impose a slightly stronger differentiability condition (polynomially decaying of the third derivative) for establishing asymptotic normality. This condition has been formerly introduced in \cite{F.Bachoc} and holds for the geometrically anisotropic covariances in Remark \ref{Rem3.1}.

\section{Simulation studies}\label{SimulRes}

The relatively large-scaled numerical studies in this section give a fairly comprehensive appraisal of the statistical and computational performance of the optimization problem \eqref{InvFreeOptProb}. Despite the popularity of \emph{R} language among the statisticians, running the iterative programs such as loops in R is much slower than that of C$++$ (around $250$ times slower according to some studies \cite{SB.Aruoba}). Taking advantage of the \emph{Rcpp} package and hybrid programming techniques in R can considerably expedite the execution time (up to $50$ times in our simulation studies). In order to get the maximum speed, the \emph{open MP} application programming interface has been used to exploit the multi-threaded programing technology. All the numerical experiments in this section have been executed on $12$ processors, except for the second simulation study ($n = 10^6$) which has been implemented on $60$ cores.

Generating high dimensional samples from a Gaussian process on an irregularly spaced grid is the foremost challenge that we confronted in our synthetic data simulations. Applying the traditional method based upon the Cholesky decomposition of the covariance matrix is almost infeasible in the case that $n \approx 10^5$ or larger. Hence, we use the considerably faster spectral method (pp. $203-205$, \cite{N.Cressie}) for generating stationary Gaussian processes. For completeness, this algorithm will be concisely presented here. Strictly speaking, the objective is to simulate a real valued zero mean stationary Gaussian process $\ff{G}$ in $\bb{R}^d$ with the covariance function $\phi_0K\paren{\cdot,\theta_0}$ over a $\delta-$perturbed lattice $\cc{D}_n = \set{s_1,\ldots,s_n}$. For the purpose of generating a realization of $\ff{G}$ on a perturbed grid, without loss of generality we can assume that the samples are all of unit variance, i.e., $\phi_0 = 1$. We also assume that $\ff{G}$ is geometric anisotropic. Recalling from Definition \ref{GeoAnis}, there is a symmetric positive definite matrix $B_0\in\bb{R}^{d\times d}$ which represents the symmetric square root of $A_0$, such that $K\paren{r,\theta_0} = K\paren{\LpNorm{B_0r}{2}}$. Throughout this section $d=2$ and $K$ is either the Matern or rational quadratic covariance function which have been previously introduced in Remark \ref{Rem3.1}.

Let $p\in\bb{N}$ be a large enough number and $\set{\xi_k}^p_{k=1}$ be i.i.d. uniform random variables on $\brac{-\pi,\pi}$. Let $\hat{K}:\bb{R}^d\mapsto\bb{R}$ denotes the spectral density of $\ff{G}$ defined by
\begin{equation*}
\hat{K}\paren{\omega} \coloneqq \paren{2\pi}^{-d}\int_{\bb{R}^d} K\paren{r,\theta_0} \cos\paren{\InnerProd{\omega}{r} } dr = \paren{2\pi}^{-d}\int_{\bb{R}^d} K\paren{\LpNorm{B_0r}{2}} \cos\paren{\InnerProd{\omega}{r} } dr.
\end{equation*}
The non-negative mapping $\hat{K}\paren{\cdot}$ is a density function in $\bb{R}^d$ (since it integrates to $K\paren{0,\theta_0} = 1$). Furthermore, let $\set{\omega_k}^N_{k=1}$ be independent draws from the density $\hat{K}\paren{\cdot}$. Now, define
\begin{equation}\label{GPApprox}
\ff{G}\paren{s} = \sqrt{\frac{2}{p}}\sum_{k=1}^{p} \cos\paren{\InnerProd{\omega_k}{s}+\xi_k},\quad\forall\; s\in\bb{R}^d.
\end{equation}
It is known that $\ff{G}$ is a geometric anisotropic process with $\cov\paren{\ff{G}\paren{s},\ff{G}\paren{s'}} = K\paren{\LpNorm{B_0\paren{s-s'}}{2}}$ for any pair $s,s'\in\bb{R}^d$ (p. $204$, \cite{N.Cressie}), converging in distribution to a Gaussian process as $p$ tends to infinity. Next, we explain how to generate the random variables $\set{\omega_k}^p_{k=1}$. The following fact which can be proved using the integration by substitution plays a principal role in our algorithm.

\begin{rem}
Let $\omega'\in\bb{R}^d$ be a draw from the following density function
\begin{equation*}
\hat{K}_I\paren{u} = \paren{2\pi}^{-d}\int_{\bb{R}^d} K\paren{\LpNorm{r}{2}} \cos\paren{\InnerProd{u}{r} } dr.
\end{equation*}
Then $\omega$ and $B_0\omega'$ have the same distribution, i.e., $\omega\eqd B_0\omega'$. Note that $\hat{K}_I$ is an isotropic function. Namely, there is a function $\Phi:\bb{R}\mapsto\brapar{0,\infty}$ for which $\hat{K}_I\paren{u} = \Phi\paren{\LpNorm{u}{2}}$. Moreover $\omega' \eqd \ff{r} \frac{\psi_d}{\LpNorm{\psi_d}{2}}$ in which $\psi_d$ is a standard $d-$ dimensional Gaussian vector and $\ff{r}$ is a non-negative random variable with the density function
\begin{equation*}
f_{\ff{r}}\paren{r} = \frac{d\pi^{d/2}}{\Gamma\paren{d/2+1}}r^{d-1}\Phi\paren{r} = \frac{2\pi^{d/2}}{\Gamma\paren{d/2}}r^{d-1}\Phi\paren{r}.
\end{equation*}
For instance in the case of $d=2$, we have $f_{\ff{r}}\paren{r} = 2\pi r \Phi\paren{r}$. Hence, $\omega\eqd \frac{\ff{r}}{\LpNorm{\psi_d}{2}}B_0\psi_d$.
\end{rem}

For Matern covariance function in two dimensional plane ($d=2$), generating independent samples of the random variable $\ff{r}$ is a straightforward task. In this case 
\begin{equation*}
f_{\ff{r}}\paren{r} = \frac{2\pi^{d/2}}{\Gamma\paren{d/2}}r^{d-1}\Phi\paren{r} = \frac{2\pi^{d/2}}{\Gamma\paren{d/2}}r^{d-1}\frac{\pi^{-d/2}\Gamma\paren{d/2+\nu} }{\Gamma\paren{\nu}}\paren{1+r^2}^{-\paren{\nu+d/2}} = \frac{2r\nu}{\paren{1+r^2}^{1+\nu}}.
\end{equation*}
Thus the cumulative distribution is of the form $\Pr\paren{\ff{r}\leq r} \coloneqq F_{\ff{r}}\paren{r} = 1-\paren{1+r^2}^{-\nu}$. So 
\begin{equation*}
\ff{r}\eqd F^{-1}_{\ff{r}}\paren{\ff{u}} = \sqrt{1-\paren{1-\ff{u}}^{-1/\nu}},
\end{equation*}
in which $\ff{u}$ is a uniform random variable in $\brac{0,1}$. One can find a closed from expression for $\ff{r}$ in terms of $\ff{u}$ for the rational quadratic covariance function, in the case that $\paren{\tau + \frac{1}{2}} \in\bb{N}$ (Recall $\tau$ from Remark \ref{Rem3.1}). In this case, $\Phi\paren{\cdot}$ has a form of the Matern covariance function \eqref{MaternCovFunc} (with different constants) due to the duality principle of the Fourier transform.

Throughout this section, $\ff{G}$ is assumed to be a zero mean Gaussian process in $\bb{R}^2$, whose covariance function is a member of either Matern or rational quadratic families with a known fractal index. In the first experiment, $\ff{G}$ is an isotropic spatial process. In other words, we set $A_0 = \theta^{-2}_0 I_2$ in the Definition \ref{GeoAnis}. $\theta_0$ is a strictly positive scalar known as the \emph{range parameter}. Furthermore, $\cc{D}_n$ is a randomly generated $\delta-$perturbed lattice of size $320^2$, i.e., $n = 102400 \approx 10^5$. The approximated realizations of $\ff{G}$ are generated using \eqref{GPApprox} with $p = 1.5\times 10^5$. To investigate the role of spatial irregularity in the computational and statistical performance of ACS's algorithm, we vary $\delta$ in the set $\set{0.1,0.3}$. The range parameter and the standard deviation, which is represented by $\sigma_0 = \sqrt{\phi_0}$, are respectively estimated solving the optimization problem \eqref{ObjFuncTheta} and closed form formula \eqref{ClosedFormFormulaVar}. The range parameter space is chosen as $\Theta=\brac{0.1,15}$. The single variable constrained optimization problem \eqref{ObjFuncTheta} is solved using the \emph{optimize} function in \emph{R} ,which exploits a combination of golden section search and successive parabolic interpolation. We stop the iteration of the solver when the relative change in the objective is below $10^{-3}$. Table \ref{Table1} displays the summary of our first simulation study.

\vspace{4mm}
\begin{adjustbox}{width=0.95\columnwidth}
\begin{tabular}{cc|c|c|c|c|}
\cline{3-6}
& & \multicolumn{2}{c|}{$\delta = 0.1$} & \multicolumn{2}{c|}{$\delta = 0.3$} \\ \hline
\multicolumn{1}{|c|}{\multirow{6}{*}{Matern} } &
\multicolumn{1}{|c|}{\multirow{2}{*}{$\nu_0 = 0.5$} } &
$\eta_0 = \paren{1,4}$ & $\eta_0 = \paren{1,7}$ & $\eta_0 = \paren{1,4}$ & $\eta_0 = \paren{1,7}$ \\
\multicolumn{1}{|c|}{}  
& & $\hat{\eta} = \paren{0.993,4.420}$ & $\hat{\eta} = \paren{0.978,7.565}$ & $\hat{\eta} = \paren{1.005,3.941}$ & $\hat{\eta} = \paren{1.028,6.553}$ \\ 
\cline{2-6} 
\multicolumn{1}{|c|}{} & \multicolumn{1}{|c|}{\multirow{2}{*}{$\nu_0 = 1.5$} } &
$\eta_0 = \paren{1,4}$ & $\eta_0 = \paren{1,7}$ & $\eta_0 = \paren{1,4}$ & $\eta_0 = \paren{1,7}$ \\
\multicolumn{1}{|c|}{} 
& & $\hat{\eta} = \paren{0.973,3.618}$ & $\hat{\eta} = \paren{1.023,6.568}$ & $\hat{\eta} = \paren{1.026,4.343}$ & $\hat{\eta} = \paren{1.005,7.102}$ \\ 
\cline{2-6}
\multicolumn{1}{|c|}{} & \multicolumn{1}{|c|}{\multirow{2}{*}{$\nu_0 = 2.5$} } &
$\eta_0 = \paren{1,4}$ & $\eta_0 = \paren{1,7}$ & $\eta_0 = \paren{1,4}$ & $\eta_0 = \paren{1,7}$ \\
\multicolumn{1}{|c|}{} 
& & $\hat{\eta} = \paren{1.014,4.186}$ & $\hat{\eta} = \paren{1.010,7.428}$ & $\hat{\eta} = \paren{1.044,4.037}$ & $\hat{\eta} = \paren{0.993,6.505}$ \\ \hline
\multicolumn{1}{|c|}{\multirow{4}{*}{Rational quadratic} } &
\multicolumn{1}{|c|}{\multirow{2}{*}{$\nu_0 = 0.5$} } &
$\eta_0 = \paren{1,4}$ & $\eta_0 = \paren{1,7}$ & $\eta_0 = \paren{1,4}$ & $\eta_0 = \paren{1,7}$ \\
\multicolumn{1}{|c|}{}  
& & $\hat{\eta} = \paren{1.017,4.100}$ & $\hat{\eta} = \paren{0.976,7.415}$ & $\hat{\eta} = \paren{1.013,3.916}$ & $\hat{\eta} = \paren{1.001,6.639}$ \\ 
\cline{2-6}
\multicolumn{1}{|c|}{} & \multicolumn{1}{|c|}{\multirow{2}{*}{$\nu_0 = 1.5$} } &
$\eta_0 = \paren{1,4}$ & $\eta_0 = \paren{1,7}$ & $\eta_0 = \paren{1,4}$ & $\eta_0 = \paren{1,7}$ \\
\multicolumn{1}{|c|}{} 
& & $\hat{\eta} = \paren{1.000,4.001}$ & $\hat{\eta} = \paren{1.014,7.210}$ & $\hat{\eta} = \paren{1.017,3.920}$ & $\hat{\eta} = \paren{0.994,7.063}$ \\ \hline
\end{tabular}
\end{adjustbox}
\captionof{table}{Estimation of $\eta_0 = \paren{\sigma_0,\theta_0}$ for the isotropic Matern and rational quadratic covariance functions, where $\cc{D}_n$ is a perturbed lattice of size $320^2$ with associated $\delta\in\set{0.1,0.3}$.}
\label{Table1}
\vspace{4mm}
The required CPU times for the numerical experiments in Table \ref{Table1} are approximately $30$ and $60$ minutes for the rational quadratic and Matern kernels, respectively. However evaluating the full MLE for a such large sample size is intractable. As is apparent from Table \ref{Table1}, the estimated parameters, $\hat{\eta}$, are in a close neighborhood of $\eta_0$. Moreover, estimating $\sigma_0$ has a significantly higher precision than that of the range parameters, since as the distant samples in $\cc{D}_n$ carry negligible information about $\theta_0$. Lastly, the condition number of the covariance matrix increases with the value of range parameter, leading to a higher estimation error $\LpNorm{\eta_0-\hat{\eta}}{2}$ for larger $\theta_0$. In the second simulation study which has the same set up as the first experiment, $\cc{D}_n$ is a irregular grid of size $1000^2 = 10^6$. We also set $p = 5\times 10^5$ in \eqref{GPApprox}. Table \ref{Table2} encapsulates the results of this experiment. The evaluation of $\hat{\eta}$ for this very high dimensional numerical study takes $8$ hours on $60$ cores with $4GB$ RAM. 
\vspace{1mm}
\begin{center}
\begin{tabular}{c|c|c|}
\cline{2-3}
& $\delta = 0.1,\; \nu_0 = 0.5$ & $\delta = 0.3,\; \nu_0 = 1.5$ \\ \hline
\multicolumn{1}{|c|}{\multirow{2}{*}{Rational quadratic} } &
$\eta_0 = \paren{1,4}$ & $\eta_0 = \paren{1,7}$\\
\multicolumn{1}{|c|}{}
& $\hat{\eta} = \paren{1.004,4.053}$ & $\hat{\eta} = \paren{0.999,6.948}$ \\ \hline
\end{tabular}
\captionof{table}{Estimation of $\eta_0 = \paren{\sigma_0,\theta_0}$ for the isotropic rational quadratic covariance functions, where $\cc{D}_n$ is a perturbed lattice of size $1000^2$ with associated $\delta\in\set{0.1,0.3}$.}
\label{Table2}
\end{center}

In the next set of experiments featuring Gaussian processes with isotropic covariance functions, we set $\cc{D}_n$ to be a two dimensional perturbed lattice of size $100^2$. For such a scenario, $\hat{\eta}$ can be estimated in a few minutes. Thus, we simulated $T = 100$ independent realizations and $\eta_0$ is estimated using the same procedure as previous studies, for each realization. The mean and root mean squared error (RMSE) have been computed across $T$ experiments. Table \ref{Table3} displays the average and RMSE for the standard deviation and range parameters for different values of $\eta$, $\delta$ and covariance kernels. The instances for which $\hat{\eta}$ hits the boundary points of $\Theta$ have been excluded in the procedure of calculating the mean and RMSE of the estimates.

Looking at the left and right panels of the Table \ref{Table3} reveals that the RMSE of $\hat{\sigma}$ and $\hat{\theta}$ are slightly larger for the higher values of $\delta$. Moreover, as we have discussed before, the RMSE and the average norm of $\paren{\hat{\eta}-\eta_0}$ directly depends on the range parameter. It is immediately clear that there is a considerable reduction in RMSE for rational quadratic kernel comparing to the Matern class. This observation may look surprising for the reader, as the condition number of the covariance matrix associated to Matern kernel is smaller than that of rational quadratic due to its faster decay. Thus at the first glance, it may not corroborate our developed theory regarding the consistency of ACS's estimation algorithm for covariance matrices with bounded condition number. However,  obtaining a highly accurate estimate of the dependence parameters are more difficult for a rapidly decaying covariance function, as more samples are almost independent. In the extreme case $\theta_0$ is unidentifiable if $K\paren{\cdot,\theta_0}$ is a compactly supported covariance function whose support size is strictly less than $\paren{1-2\delta}$ (In this case all the samples are independent).

\vspace{4mm}
\begin{adjustbox}{width=0.95\columnwidth}
\begin{tabular}{c|c|c|c|c|}
\cline{2-5}
& \multicolumn{2}{c|}{$\delta = 0.1$} & \multicolumn{2}{c|}{$\delta = 0.3$} \\ \hline
\multicolumn{1}{|c|}{\multirow{3}{*}{Matern covariance} } &
$\paren{\sigma_0,\theta_0} = \paren{1,4}$ & $\paren{\sigma_0,\theta_0} = \paren{1,7}$ & $\paren{\sigma_0,\theta_0} = \paren{1,4}$ & $\paren{\sigma_0,\theta_0} = \paren{1,7}$ \\
\multicolumn{1}{|c|}{}  
& $\hat{\theta} \pm \mbox{RSME} = 4.107 \pm 1.224$ & $\hat{\theta} \pm \mbox{RSME} = 7.259 \pm 2.462$ & $\hat{\theta} \pm \mbox{RSME} = 3.982 \pm 0.980$ & $\hat{\theta} \pm \mbox{RSME} = 6.814 \pm 2.233$ \\
\multicolumn{1}{|c|}{($\nu_0 = 0.5$)}  
& $\hat{\sigma} \pm \mbox{RSME} = 0.999 \pm 0.067$ & $\hat{\sigma} \pm \mbox{RSME} = 0.991 \pm 0.089$ & $\hat{\sigma} \pm \mbox{RSME} = 0.995 \pm 0.062$ & $\hat{\sigma} \pm \mbox{RSME} = 1.003 \pm 0.096$ \\ \hline
\multicolumn{1}{|c|}{\multirow{3}{*}{Matern covariance} } &
$\paren{\sigma_0,\theta_0} = \paren{1,4}$ & $\paren{\sigma_0,\theta_0} = \paren{1,7}$ & $\paren{\sigma_0,\theta_0} = \paren{1,4}$ & $\paren{\sigma_0,\theta_0} = \paren{1,7}$ \\
\multicolumn{1}{|c|}{}  
& $\hat{\theta} \pm \mbox{RSME} = 3.936 \pm 1.127$ & $\hat{\theta} \pm \mbox{RSME} = 6.588 \pm 2.060$ & $\hat{\theta} \pm \mbox{RSME} = 4.180 \pm 1.181$ & $\hat{\theta} \pm \mbox{RSME} = 6.519 \pm 2.127$ \\
\multicolumn{1}{|c|}{($\nu_0 = 1.5$)}  
& $\hat{\sigma} \pm \mbox{RSME} = 1.002 \pm 0.070$ & $\hat{\sigma} \pm \mbox{RSME} = 0.992 \pm 0.096$ & $\hat{\sigma} \pm \mbox{RSME} = 0.995 \pm 0.072$ & $\hat{\sigma} \pm \mbox{RSME} = 1.018 \pm 0.107$ \\ \hline
\multicolumn{1}{|c|}{\multirow{3}{*}{Rational quadratic} } &
$\paren{\sigma_0,\theta_0} = \paren{1,4}$ & $\paren{\sigma_0,\theta_0} = \paren{1,7}$ & $\paren{\sigma_0,\theta_0} = \paren{1,4}$ & $\paren{\sigma_0,\theta_0} = \paren{1,7}$ \\
\multicolumn{1}{|c|}{}  
& $\hat{\theta} \pm \mbox{RSME} = 3.889 \pm 0.599$ & $\hat{\theta} \pm \mbox{RSME} = 6.855 \pm 1.507$ & $\hat{\theta} \pm \mbox{RSME} = 4.032 \pm 0.647$ & $\hat{\theta} \pm \mbox{RSME} = 6.793 \pm 1.373$ \\
\multicolumn{1}{|c|}{covariance ($\nu_0 = 0.5$)}  
& $\hat{\sigma} \pm \mbox{RSME} = 1.002 \pm 0.062$ & $\hat{\sigma} \pm \mbox{RSME} = 0.986 \pm 0.082$ & $\hat{\sigma} \pm \mbox{RSME} = 0.992 \pm 0.046$ & $\hat{\sigma} \pm \mbox{RSME} = 0.990 \pm 0.069$ \\ \hline
\multicolumn{1}{|c|}{\multirow{3}{*}{Rational quadratic} } &
$\paren{\sigma_0,\theta_0} = \paren{1,4}$ & $\paren{\sigma_0,\theta_0} = \paren{1,7}$ & $\paren{\sigma_0,\theta_0} = \paren{1,4}$ & $\paren{\sigma_0,\theta_0} = \paren{1,7}$ \\
\multicolumn{1}{|c|}{}  
& $\hat{\theta} \pm \mbox{RSME} = 3.984 \pm 0.342$ & $\hat{\theta} \pm \mbox{RSME} = 7.160 \pm 1.010$ & $\hat{\theta} \pm \mbox{RSME} = 4.016 \pm 0.348$ & $\hat{\theta} \pm \mbox{RSME} = 7.127 \pm 1.116$ \\
\multicolumn{1}{|c|}{covariance ($\nu_0 = 1.5$)}  
& $\hat{\sigma} \pm \mbox{RSME} = 0.999 \pm 0.028$ & $\hat{\sigma} \pm \mbox{RSME} = 0.994 \pm 0.074$ & $\hat{\sigma} \pm \mbox{RSME} = 0.994 \pm 0.026$ & $\hat{\sigma} \pm \mbox{RSME} = 0.995 \pm 0.049$ \\ \hline
\end{tabular}
\end{adjustbox}
\captionof{table}{Mean and RMSE of $\hat{\eta}$ over $100$ independent experiments for the isotropic Matern and rational quadratic covariance functions, where $\cc{D}_n$ is a perturbed lattice of size $100^2$ with associated $\delta\in\set{0.1,0.3}$.}
\label{Table3}

\begin{figure}
\centering
\setlength\figureheight{5cm} 
\setlength\figurewidth{6.5cm} 
\subimport{/}{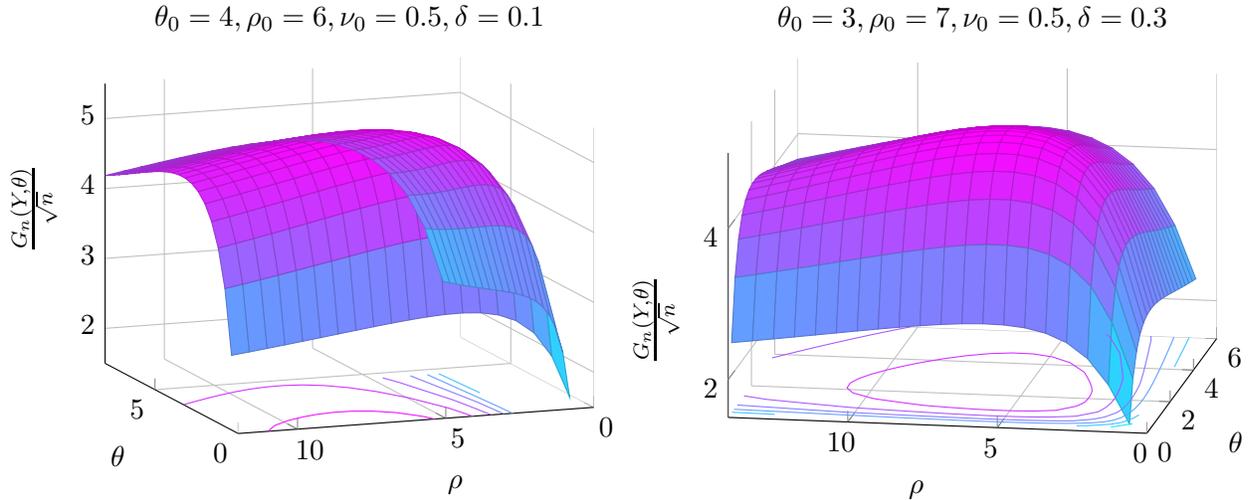}
\caption{The above figures exhibit $n^{-1/2}G_n\paren{Y,\theta}$ for geometric anisotropic Matern covariance function with $\nu_0 = 0.5$. The spatial samples form a two dimensional randomly $\delta-$perturbed regular lattice of size $N = 100$. In the left panel, $\paren{\theta_0,\rho_0} = \paren{4,6}$ and $\delta = 0.1$. In the right panel, $\paren{\theta_0,\rho_0} = \paren{3,7}$ and $\delta = 0.3$.}
\label{Fig:Fig2}
\end{figure}

\vspace{4mm}
Now we turn to investigate the precision and RMSE of estimation algorithm \eqref{InvFreeOptProb} for the geometric anisotropic covariance structure. Same as before, $\ff{G}$ is a zero mean stationary Gaussian process in $\bb{R}^2$ observed on a perturbed lattice of size $100^2$. $\ff{G}$ has a geometric anisotropic covariance kernel (Matern or rational quadratic) with 
\begin{equation*}
B_0 = \left(\begin{array}{cc}
\theta^{-1}_0 & 0\\
0 & \rho^{-1}_0
\end{array}\right)
, \quad\theta_0 = 4,\;\;\mbox{and}\;\; \rho_0 = 6.
\end{equation*}
The parameter space $\Theta = \set{\paren{\theta_0,\rho_0}\in\Theta}$ is a two dimensional box chosen as $\brac{0.1,15}^2$. The range parameters $\theta_0$ and $\rho_0$ are estimated by solving the optimization problem \eqref{ObjFuncTheta}. The Figure \ref{Fig:Fig2} exhibits the objective function in \eqref{ObjFuncTheta} and its contours for a Gaussian process with geometric anisotropic Matern covariance with $\nu_0 = 0.5$ which has been sampled on a $\delta-$perturbed regular grid. In the left and right panels of Figure \ref{Fig:Fig2}, the other parameters are respectively given by $\paren{\theta_0,\rho_0,\delta} = \paren{4,6,0.1}$ and $\paren{\theta_0,\rho_0,\delta} = \paren{3,7,0.3}$. It can be seen from Figure \ref{Fig:Fig2} that $G_n$ is a unimodal function with only one stationary point. We perform the maximization using the \emph{optim} function in \emph{R} and with \emph{L-BFGS-B} algorithm \cite{RH.Byrd} (box constrained BFGS). The maximum iteration and the initial guess of the \emph{L-BFGS-B} method are respectively $100$ and $\paren{2,2}$. The components of the gradient function are computed using the finite difference approximation with the step size of $10^{-3}$. We cease the iteration when the relative change in the objective function is below $10^{-5}$. The computation procedure of the average and RMSE of $\hat{\eta} = (\hat{\theta},\hat{\rho},\hat{\sigma})$ is exactly the same as the former simulation study. Table \ref{Table4} presents a summary of the final results of this simulation study. It is clear from Table \ref{Table4} that the RMSE for Matern covariance is significantly larger in comparison to rational quadratic class. Furthermore, increasing $\nu_0$ for each covariance kernel leads to a slightly larger RMSE.  

\vspace{4mm}
\begin{adjustbox}{width=0.85\columnwidth}
\begin{tabular}{c|c|c|}
\cline{2-3}
& $\delta = 0.1$ & $\delta = 0.3$ \\ \hline
\multicolumn{1}{|c|}{\multirow{4}{*}{Matern covariance ($\nu_0 = 0.5$)} } &
$\paren{\sigma_0,\rho_0,\theta_0} = \paren{1,6,4}$ & $\paren{\sigma_0,\rho_0,\theta_0} = \paren{1,6,4}$ \\
\multicolumn{1}{|c|}{}  
& $\hat{\sigma} \pm \mbox{RSME} = 0.988 \pm 0.096$ & $\hat{\sigma} \pm \mbox{RSME} = 0.993 \pm 0.097$ \\
\multicolumn{1}{|c|}{}  
& $\hat{\rho} \pm \mbox{RSME} = 6.042 \pm 1.885$ & $\hat{\rho} \pm \mbox{RSME} = 6.478 \pm 1.908$ \\ 
\multicolumn{1}{|c|}{}  
& $\hat{\theta} \pm \mbox{RSME} = 4.091 \pm 1.110$ & $\hat{\theta} \pm \mbox{RSME} = 4.038 \pm 1.272$ \\ \hline
\multicolumn{1}{|c|}{\multirow{4}{*}{Matern covariance ($\nu_0 = 1.5$)} } &
$\paren{\sigma_0,\rho_0,\theta_0} = \paren{1,6,4}$ & $\paren{\sigma_0,\rho_0,\theta_0} = \paren{1,6,4}$ \\
\multicolumn{1}{|c|}{}  
& $\hat{\sigma} \pm \mbox{RSME} = 0.993 \pm 0.108$ & $\hat{\sigma} \pm \mbox{RSME} = 0.984 \pm 0.104$ \\
\multicolumn{1}{|c|}{}  
& $\hat{\rho} \pm \mbox{RSME} = 05.965 \pm 1.981$ & $\hat{\rho} \pm \mbox{RSME} = 6.160 \pm 1.890$ \\ 
\multicolumn{1}{|c|}{}  
& $\hat{\theta} \pm \mbox{RSME} = 3.740 \pm 1.146$ & $\hat{\theta} \pm \mbox{RSME} = 3.970 \pm 1.243$ \\ \hline
\multicolumn{1}{|c|}{\multirow{4}{*}{Rational quadratic covariance ($\nu_0 = 0.5$)} } &
$\paren{\sigma_0,\rho_0,\theta_0} = \paren{1,6,4}$ & $\paren{\sigma_0,\rho_0,\theta_0} = \paren{1,6'4}$ \\
\multicolumn{1}{|c|}{}  
& $\hat{\sigma} \pm \mbox{RSME} = 0.992 \pm 0.071$ & $\hat{\sigma} \pm \mbox{RSME} = 0.989 \pm 0.076$ \\
\multicolumn{1}{|c|}{}  
& $\hat{\rho} \pm \mbox{RSME} = 5.978 \pm 1.241$ & $\hat{\rho} \pm \mbox{RSME} = 5.921 \pm 1.208$ \\ 
\multicolumn{1}{|c|}{}  
& $\hat{\theta} \pm \mbox{RSME} = 4.092 \pm 0.843$ & $\hat{\theta} \pm \mbox{RSME} = 4.037 \pm 1.064$ \\ \hline
\multicolumn{1}{|c|}{\multirow{4}{*}{Rational quadratic covariance ($\nu_0 = 1.5$)}} &
$\paren{\sigma_0,\rho_0,\theta_0} = \paren{1,6,4}$ & $\paren{\sigma_0,\rho_0,\theta_0} = \paren{1,6,4}$ \\
\multicolumn{1}{|c|}{}  
& $\hat{\sigma} \pm \mbox{RSME} = 0.996 \pm 0.036$ & $\hat{\sigma} \pm \mbox{RSME} = 0.998 \pm 0.036$ \\
\multicolumn{1}{|c|}{}  
& $\hat{\rho} \pm \mbox{RSME} = 6.116 \pm 0.821$ & $\hat{\rho} \pm \mbox{RSME} = 6.158 \pm 0.766$ \\ 
\multicolumn{1}{|c|}{}  
& $\hat{\theta} \pm \mbox{RSME} = 4.045 \pm 0.543$ & $\hat{\theta} \pm \mbox{RSME} = 4.150 \pm 0.524$ \\ \hline
\end{tabular}
\end{adjustbox}
\captionof{table}{Mean and RMSE of $\hat{\eta}$ over $100$ independent experiments for the geometric anisotropic Matern and rational quadratic covariance functions, where $\cc{D}_n$ is a perturbed lattice of size $100^2$ with associated $\delta\in\set{0.1,0.3}$.}
\label{Table4}

\section{Discussion}\label{Discus}

Investigation of the asymptotic properties of the non-likelihood based optimization algorithms for estimating covariance parameters has remained relatively intact. To our knowledge, this paper is the first asymptotic analysis of ACS's algorithm in the increasing domain regime. Notwithstanding the thorough study of the consistency, minimax optimality and asymptotic normality of the stationary points of ACS's loss function, there is much future work to be done to determine the computational and statistical strengths and weaknesses of this algorithm in either of the two frequently used asymptotic regimes. Here we mention a few among the many future directions which were beyond the scope of this paper.

\begin{enumerate}[label = (\alph*),leftmargin=*]
\item As indicated in Remark \ref{Rem2.1}, the inversion-free loss function can be viewed as an approximate minorizer for the likelihood loss (in the expected value sense) in the increasing domain setting. However, more work needs to be done to know how to precisely characterize a rich class of minorizers for the likelihood loss. We believe that responding to this question will provide a flexible class of fast and consistent estimators of covariance parameters.
\item Spatial statisticians usually cast doubt upon the benefits of increasing domain asymptotics as spatial processes are unlikely to be stationary over a large domain. However the developed theory in this manuscript has persuaded us that inversion free algorithm can consistently estimate microergodic covariance parameters, when applied on the preconditioned samples of a Gaussian random field in a fixed domain. In this regard, a modified version of the estimator studied in this paper can provide a powerful tool for interpolation of Gaussian processes.
\end{enumerate}

\section{Proofs of the main results}\label{Proofs}

We first introduce a few notation to simplify the algebra in the forthcoming sections. For any strictly positive scalar $r$ and any $\theta_0\in\Theta$, the ball of radius $r$ (with respect to the Euclidean norm) centered at $\theta_0$ and its complement are defined by
\begin{equation*}
\Theta_{\theta_0}\paren{r} \coloneqq \set{\theta\in\Theta:\; \LpNorm{\theta-\theta_0}{2}\leq r},\quad \Theta^c_{\theta_0}\paren{r}\coloneqq \Theta\setminus \Theta_{\theta_0}\paren{r}.
\end{equation*}
Furthermore, for any $\theta_1,\theta_2\in\Theta$ define
\begin{equation}\label{MandH}
M_{\theta_1,\theta_2}\coloneqq \frac{K^{1/2}_n\paren{\theta_1}K_n\paren{\theta_2}K^{1/2}_n\paren{\theta_1}}{\LpNorm{K_n\paren{\theta_2}}{2}^{2}} ,\quad H_{\theta_1,\theta_2}\coloneqq \LpNorm{K_n\paren{\theta_2}}{2}M_{\theta_1,\theta_2}.
\end{equation}

\begin{proof}[Proof of Theorem \ref{GlobMinRate}]

Our proof has two major parts. In the first part, the consistency of $\hat{\theta}_n$ (correlation function's parameters) will be substantiated. In the second part, we establish the consistency of $\hat{\phi}_n$, which has a closed form solution in terms of $Y$, correlation function and $\hat{\theta}_n$, by conditioning on the consistency of $\hat{\theta}_n$. To this end, various types of concentration inequalities regarding the quadratic forms (and their supremum over a bounded space) of Gaussian processes are of the indispensable importance. Such results will be presented in the Appendix \ref{AuxRes}.
	
Let $Z$ be a standard Gaussian vector in $\bb{R}^n$. As $Y$ and $\sqrt{\phi_0}K^{1/2}_n(\theta_0)Z$ have the same distribution, \eqref{InvFreeOptProb} can be equivalently written by 
\begin{equation}\label{InvFreeOptProbEqForm}
\paren{\hat{\phi}_n,\hat{\theta}_n} = \argmax_{\paren{\phi,\theta}\in\cc{I}\times \Theta} \set{\phi\phi_0 Z^\top K^{1/2}_n(\theta_0)K_n\paren{\theta}K^{1/2}_n(\theta_0)Z -\frac{\phi^2}{2}\LpNorm{K_n\paren{\theta}}{2}^2 }.
\end{equation}
The objective function in \eqref{InvFreeOptProbEqForm} is quadratic in terms of $\phi$ and its maximizer $\hat{\phi}_n$ has a simple closed form. Replacing $\hat{\phi}_n$ to \eqref{InvFreeOptProbEqForm} gives a surrogate form for $\hat{\theta}_n$. Omitting the cumbersome algebra, the final results are given by
\begin{equation}\label{OptValInvFreeOptProb}
\frac{\hat{\phi}_n}{\phi_0} = Z^\top M_{\theta_0,\hat{\theta}_n}Z,\quad\quad \hat{\theta}_n = \argmax_{\theta\in\Theta} Z^\top H_{\theta_0,\theta} Z.
\end{equation}
We first show (as Claim $1$) the consistency of $\hat{\theta}_n$, which is the supremum of a generalized chi-square random variable. The purpose of Claim $2$ is to find the estimation rate of $\phi_0$.
	
\begin{clawithinpf}\label{Claim1Thm1}
Choose $\xi>\paren{m-1}$ and let $r_n\coloneqq C_{\min}\sqrt{\frac{\ln n}{n}}$ for some bounded positive scalar $C_{\min}$ (See Lemma \ref{Hanson-WrightConcIneq} for its exact form). Then,
\begin{equation*}
\Pr\set{\hat{\theta}_n\in \Theta^c_{\theta_0}\paren{r_n}}\rightarrow 0,\quad\mbox{as}\; n\rightarrow \infty.
\end{equation*}
\end{clawithinpf}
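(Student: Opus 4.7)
My approach is a standard two-stage $M$-estimator argument. First I would show that the population objective $\bar{L}_n(\theta) := \mathrm{E}[Z^\top H_{\theta_0,\theta} Z] = \tr(H_{\theta_0,\theta}) = \InnerProd{K_n(\theta_0)}{K_n(\theta)}/\LpNorm{K_n(\theta)}{2}$ is uniquely maximized at $\theta_0$, with a quadratic local separation in $\LpNorm{\theta-\theta_0}{2}$. Then I would control the empirical fluctuations $L_n(\theta)-\bar{L}_n(\theta)$ on dyadic shells around $\theta_0$ using Hanson-Wright together with a covering/Lipschitz argument, and combine the two via a peeling step.

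For the population side, Cauchy-Schwarz in the Frobenius inner product gives $\bar{L}_n(\theta) \leq \LpNorm{K_n(\theta_0)}{2} = \bar{L}_n(\theta_0)$. The key identity I plan to invoke is
\begin{equation*}
2\Bigl(\LpNorm{K_n(\theta_0)}{2}\LpNorm{K_n(\theta)}{2} - \InnerProd{K_n(\theta_0)}{K_n(\theta)}\Bigr)
= \LpNorm{K_n(\theta)-K_n(\theta_0)}{2}^2 - \bigl(\LpNorm{K_n(\theta)}{2}-\LpNorm{K_n(\theta_0)}{2}\bigr)^2,
\end{equation*}
which rewrites $\bar{L}_n(\theta_0)-\bar{L}_n(\theta)$ as a difference of two squares divided by $\LpNorm{K_n(\theta)}{2}$. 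The identifiability assumption $(A2)$, combined with the bounded multiplicity of near neighbors guaranteed by Assumption \ref{ObsLocAssu}, yields $\LpNorm{K_n(\theta)-K_n(\theta_0)}{2}^2 \gtrsim n\LpNorm{\theta-\theta_0}{2}^2$; the subtracted term can be shown, via the unit-diagonal structure of correlation matrices (which pins the proportionality constant between $K_n(\theta)$ and $K_n(\theta_0)$ to unity), to be strictly dominated so that the difference remains $\gtrsim n\LpNorm{\theta-\theta_0}{2}^2$. Dividing by $\LpNorm{K_n(\theta)}{2}=\Theta(\sqrt{n})$ (another consequence of $(A3)$) delivers
\begin{equation*}
\bar{L}_n(\theta_0) - \bar{L}_n(\theta) \;\geq\; c\sqrt{n}\,\min\bigl\{\LpNorm{\theta-\theta_0}{2}^2,\, C_0\bigr\}.
\end{equation*}

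For the empirical side I would apply Hanson-Wright (Lemma \ref{Hanson-WrightConcIneq}) to $A_\theta := H_{\theta_0,\theta}-H_{\theta_0,\theta_0}$. Using $(A3)$ with $q=1$ and the Appendix calculus for polynomially decaying covariance matrices, one gets $\LpNorm{A_\theta}{2} \lesssim \LpNorm{\theta-\theta_0}{2}$ and $\OpNorm{A_\theta}{2}{2} \lesssim \LpNorm{\theta-\theta_0}{2}/\sqrt{n}$. Hanson-Wright at level $t \asymp r\sqrt{\xi\ln n}$ therefore produces a single-point failure probability $\lesssim n^{-\xi/2}$. I would partition $\Theta_{\theta_0}^c(r_n)$ into dyadic shells $\cc{A}_k := \{\theta : 2^k r_n \leq \LpNorm{\theta-\theta_0}{2} \leq 2^{k+1} r_n\}$, cover each shell by a $2^{k-1}r_n$-net of cardinality $\lesssim 2^{km}$ (since $\Theta \subset \mathbb{R}^m$), and extend from the net to the whole shell by the operator-norm Lipschitz bound $|Z^\top(H_{\theta_0,\theta}-H_{\theta_0,\theta'})Z|\leq \LpNorm{Z}{2}^2\,\OpNorm{H_{\theta_0,\theta}-H_{\theta_0,\theta'}}{2}{2}\lesssim \sqrt{n}\LpNorm{\theta-\theta'}{2}$. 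This yields $\sup_{\theta \in \cc{A}_k} |L_n(\theta)-L_n(\theta_0)-\bar{L}_n(\theta)+\bar{L}_n(\theta_0)| \lesssim 2^k r_n\sqrt{\xi\ln n}$ with failure probability $\lesssim 2^{km} n^{-\xi/2}$, which is summable over $k$ precisely when $\xi > m-1$ (the hypothesis in the claim statement).

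On the intersection of these shell events, any $\theta \in \cc{A}_k$ satisfies $L_n(\theta) - L_n(\theta_0) \leq -c\sqrt{n}(2^k r_n)^2 + C\,2^{k+1}r_n\sqrt{\xi\ln n}$, which is negative for every $k \geq 0$ once $C_{\min}$ is chosen so that $c\,C_{\min}^2 > 2C\sqrt{\xi}$; this contradicts the optimality $L_n(\hat{\theta}_n) \geq L_n(\theta_0)$, forcing $\hat{\theta}_n \in \Theta_{\theta_0}(r_n)$ with probability $\to 1$. The main obstacle will be the population separation bound: the normalization $\LpNorm{K_n(\theta)}{2}$ in the denominator of $\bar{L}_n$ prevents a direct transfer of $(A2)$, and handling the cancellation in the key identity requires a careful exploitation of the unit-diagonal constraint on correlation matrices so that the subtracted square $(\LpNorm{K_n(\theta)}{2}-\LpNorm{K_n(\theta_0)}{2})^2$ does not eat up the full lower bound on $\LpNorm{K_n(\theta)-K_n(\theta_0)}{2}^2$.
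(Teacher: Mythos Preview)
Your approach is sound and would yield the claim; it repackages the same ingredients the paper uses but in the more conventional $M$-estimation layout of separating population and empirical, then peeling. The paper does neither: it covers all of $\Theta^c_{\theta_0}\paren{r_n}$ at a single fine scale $r'_n = n^{-1}\sqrt{\ln n}$ (net size $\leq n^m$), passes to the net via a discretization lemma (Lemma~\ref{DiscritizationLemma}), and then for each net point invokes Lemma~\ref{Hanson-WrightConcIneq}, which already packages the population separation and the Hanson--Wright tail together into the single bound $\Pr\paren{Z^\top H_{\theta_0,\theta_0}Z \leq Z^\top H_{\theta_0,\theta}Z + C_2\sqrt{n^{-1}\ln n}} \leq n^{-(1+\xi)}$. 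The union bound then gives $n^m\cdot n^{-(1+\xi)}\to 0$, which is exactly where the threshold $\xi>m-1$ appears. Your peeling buys nothing here because the population gap is quadratic and the fluctuation linear in $\LpNorm{\theta-\theta_0}{2}$, so stratifying by radius is redundant; the paper's single-scale cover is simply shorter. Your population-separation step is equivalent to the paper's: inside Lemma~\ref{Hanson-WrightConcIneq} the trace is rewritten as $\tfrac{1}{2}\LpNorm{K_n(\theta_0)}{2}\,\LpNorm{K_n(\theta_0)/\LpNorm{K_n(\theta_0)}{2} - K_n(\theta)/\LpNorm{K_n(\theta)}{2}}{2}^2$ and bounded below via Lemma~\ref{ModalLem2}, whose proof is precisely the unit-diagonal argument you allude to.

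Two bookkeeping slips worth flagging. First, a $2^{k-1}r_n$-net of the shell $\cc{A}_k$ has cardinality $O_m(1)$, not $2^{km}$, since the shell's diameter and the mesh are proportional. Second, and relatedly, the threshold $\xi>m-1$ does not emerge from your peeling sum: with $O(1)$ net points per shell and $O(\log n)$ shells your argument closes under a weaker condition on the exponent. The specific threshold $\xi>m-1$ is an artifact of the paper's single-scale cover with $n^m$ net points and per-point probability $n^{-(1+\xi)}$.
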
 
	
\begin{proof}[Proof of Claim \ref{Claim1Thm1}]
Consider the sequence $r'_n = n^{-1}\sqrt{\ln n},\;\forall n$. The boundedness of $\Theta$ guarantees the existence of some $R_0>0$ such that a ball of radius $R_0$ contains $\Theta$. So, the classical volume argument implies that
\begin{equation}\label{CovNumUppBnd}
\abs{\cc{N}_{r'_n}\paren{\Theta^c_{\theta_0}\paren{r_n}}}\leq \abs{\cc{N}_{r'_n}\paren{\Theta}}\lesssim \paren{\frac{R_0}{r'_n}}^m = o\paren{n^m}.
\end{equation}
So, there is $n_1\in\bb{N}$ such that $\cc{N}_{r'_n}\paren{\Theta^c_{\theta_0}\paren{r_n}}\leq n^m$ for any $n\geq n_1$. It follows from \eqref{OptValInvFreeOptProb} that
		
\begin{equation*}
\Pr\set{\hat{\theta}_n\in \Theta^c_{\theta_0}\paren{r_n}} \leq \RHS\coloneqq \Pr\paren{A_n} \coloneqq \Pr\paren{ Z^\top H_{\theta_0,\theta_0} Z\leq \sup_{\theta\in \Theta^c_{\theta_0}\paren{r_n}} Z^\top H_{\theta_0,\theta} Z }.
\end{equation*}
Thus, it suffices to control $\RHS$ from above. For a properly chosen positive scalar $C_2$, define the event $\pi_n$ by
		
\begin{equation*}
\pi_n \coloneqq \set{ \sup_{\theta\in \Theta^c_{\theta_0}\paren{r_n}} Z^\top H_{\theta_0,\theta} Z \leq \sup_{\theta\in \cc{N}_{r'_n}\paren{\Theta^c_{\theta_0}\paren{r_n}}} Z^\top H_{\theta_0,\theta} Z + C_2\sqrt{\frac{\ln n}{n}} }.
\end{equation*}
Notice that $r_n\sqrt{n} = \cc{O}\paren{\sqrt{n^{-1}\ln n}}$. According to Lemma \ref{DiscritizationLemma}, there is a bounded $C_2>0$ for which $\tau_n \coloneqq \Pr\paren{\pi^c_n}\rightarrow 0$. We refer the reader to Lemma \ref{DiscritizationLemma} for the closed form expression of $C_2$. An upper bound on $\RHS$ is obtained by conditioning $A_n$ on $\pi_n$. 
\begin{eqnarray}\label{ModalIneq2}
\RHS &=& \Pr\paren{A_n \cap \pi_n} + \tau_n \Pr\paren{A_n \mid \pi^c_n} \leq \tau_n + \Pr\paren{A_n \cap \pi_n}\nonumber\\
&\RelNum{\paren{A}}{\leq}& \tau_n + \Pr\paren{ Z^\top H_{\theta_0,\theta_0} Z\leq \sup_{\theta\in \cc{N}_{r'_n}\paren{\Theta^c_{\theta_0}\paren{r_n}}} Z^\top H_{\theta_0,\theta} Z + C_2\sqrt{\frac{\ln n}{n}}}\nonumber\\
&\RelNum{\paren{B}}{\leq}& \tau_n + n^m \sup_{\theta\in \cc{N}_{r'_n}\paren{\Theta^c_{\theta_0}\paren{r_n}}}\Pr\paren{Z^\top H_{\theta_0,\theta_0} Z\leq Z^\top H_{\theta_0,\theta} Z+C_2\sqrt{\frac{\ln n}{n}} }
\end{eqnarray}
The way that $\pi_n$ and $A_n$ have been defined trivially justifies inequality $\paren{A}$. Furthermore $\paren{B}$ is inferred from the combination of \eqref{CovNumUppBnd} and the union bound. Applying Lemma \ref{Hanson-WrightConcIneq} guarantees the following result for any $\theta\in \cc{N}_{r'_n}\paren{\Theta^c_{\theta_0}\paren{r_n}}$.
\begin{equation}\label{ModalIneq3}
\Pr\paren{Z^\top H_{\theta_0,\theta_0} Z\leq Z^\top H_{\theta_0,\theta} Z+C_2\sqrt{\frac{\ln n}{n}} } \leq n^{-\paren{1+\xi}}.
\end{equation}
Finally, substituting \eqref{ModalIneq3} into \eqref{ModalIneq2} yields that
\begin{equation*}
\RHS \leq \paren{ \tau_n + n^{m-\paren{1+\xi}} }  \rightarrow 0,\quad \mbox{as}\; n\rightarrow \infty.
\end{equation*}
\end{proof}
	
\begin{clawithinpf}\label{Claim2Thm1}
There exists a bounded scalar $C>0$, depending on $\cc{D}_n$, $K$ and $\Omega$, such that
\begin{equation*}
\pi'_n\coloneqq\Pr\paren{\abs{\frac{\hat{\phi}_n}{\phi_0}-1}\geq r''_n\coloneqq C\sqrt{\frac{\ln n}{n}}}\rightarrow 0,\quad \mbox{as}\;n\rightarrow\infty.
\end{equation*}
(Here $C = 2\paren{\ff{D}_{\max}C_{\min}+C'\Lambda_{\max}\sqrt{m}}$ in which $C'$ is a large enough positive universal constant and $C_{\min}$ has been defined in Claim \ref{Claim1Thm1}. $\Lambda_{\max}$ and $\ff{D}_{\max}$ are given in the Proposition \ref{UnifBndEigValCorrMatrix}.)
\end{clawithinpf}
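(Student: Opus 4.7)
The plan is to decompose the relative error, using the closed form $\hat\phi_n/\phi_0 = Z^\top M_{\theta_0,\hat\theta_n}Z$ from \eqref{OptValInvFreeOptProb}, as
\begin{equation*}
\frac{\hat\phi_n}{\phi_0} - 1 \;=\; \underbrace{\bigl(Z^\top M_{\theta_0,\theta_0} Z - 1\bigr)}_{(\mathrm{I})} \;+\; \underbrace{Z^\top \bigl(M_{\theta_0,\hat\theta_n} - M_{\theta_0,\theta_0}\bigr) Z}_{(\mathrm{II})}.
\end{equation*}
I will first observe that $\tr\paren{M_{\theta_0,\theta_0}} = \tr\paren{K_n^2(\theta_0)}/\LpNorm{K_n(\theta_0)}{2}^2 = 1$, so $(\mathrm{I})$ is a centered quadratic form in a standard Gaussian vector. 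I will then bound $(\mathrm{I})$ by a Hanson-Wright concentration, and bound $(\mathrm{II})$ on the good event $E_n \coloneqq \set{\hat\theta_n \in \Theta_{\theta_0}\paren{r_n}}$ from Claim \ref{Claim1Thm1} by combining a uniform operator-norm Lipschitz bound for the map $\theta \mapsto M_{\theta_0,\theta}$ with a $\chi_n^2$ concentration for $\LpNorm{Z}{2}^2$.

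For term $(\mathrm{I})$, I will invoke a Hanson-Wright tail bound (Lemma \ref{Hanson-WrightConcIneq} or its variant from Appendix \ref{AuxRes}) to obtain $\abs{(\mathrm{I})} \lesssim \LpNorm{M_{\theta_0,\theta_0}}{2}\sqrt{t} + \OpNorm{M_{\theta_0,\theta_0}}{2}{2}\, t$ with probability at least $1 - 2e^{-t}$. The universal eigenvalue control of Proposition \ref{UnifBndEigValCorrMatrix} yields $\OpNorm{K_n(\theta_0)}{2}{2} \leq \Lambda_{\max}$; combined with the trivial bound $\LpNorm{K_n(\theta_0)}{2}^2 \geq n$ (unit diagonal), this gives $\OpNorm{M_{\theta_0,\theta_0}}{2}{2} = \cc{O}\paren{\Lambda^2_{\max}/n}$ and $\LpNorm{M_{\theta_0,\theta_0}}{2} = \cc{O}\paren{\Lambda_{\max}^2/\sqrt{n}}$. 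Taking $t \asymp \ln n$ then yields $\abs{(\mathrm{I})} \leq C'\Lambda_{\max}\sqrt{m}\sqrt{n^{-1}\ln n}$ with probability $1-o(1)$, accounting for the $C'\Lambda_{\max}\sqrt{m}$ portion of the stated constant $C$.

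For term $(\mathrm{II})$, I will condition on $E_n$, where $\LpNorm{\hat\theta_n - \theta_0}{2} \leq r_n$, and Taylor expand $\theta \mapsto M_{\theta_0,\theta}$ around $\theta_0$. Using the product/quotient structure in \eqref{MandH} and $\paren{A3}$ with $q=1$, I expect the operator norm of the numerator increment $K_n^{1/2}(\theta_0)\paren{K_n(\theta)-K_n(\theta_0)}K_n^{1/2}(\theta_0)$ to be $\lesssim \LpNorm{\theta-\theta_0}{2}$ via the row-sum bounds afforded by Proposition \ref{UnifBndEigValCorrMatrix}, while the scalar denominator increment $\abs{\LpNorm{K_n(\theta)}{2}^{-2} - \LpNorm{K_n(\theta_0)}{2}^{-2}}$ is $\cc{O}\paren{\LpNorm{\theta-\theta_0}{2}/n}$ since $\LpNorm{K_n(\theta)}{2}^2 = \tr\paren{K_n^2(\theta)}$ is Lipschitz in $\theta$ with constant $\cc{O}(n)$ while staying of order $n$. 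Combining these will give $\OpNorm{M_{\theta_0,\theta} - M_{\theta_0,\theta_0}}{2}{2} \leq \ff{D}_{\max}\LpNorm{\theta-\theta_0}{2}/n$ uniformly on $\Theta_{\theta_0}\paren{r_n}$. Since $\LpNorm{Z}{2}^2 \leq 2n$ with probability $1-o(1)$ by a standard $\chi_n^2$ tail, the symmetry bound $\abs{Z^\top A Z} \leq \LpNorm{Z}{2}^2 \OpNorm{A}{2}{2}$ produces $\abs{(\mathrm{II})} \leq 2\ff{D}_{\max} C_{\min}\sqrt{n^{-1}\ln n}$, supplying the remaining portion of $C$. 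A union bound over $E_n$, the $\chi_n^2$ event, and the Hanson-Wright event then assembles the claim with $C = 2\paren{\ff{D}_{\max} C_{\min} + C'\Lambda_{\max}\sqrt{m}}$.

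The hard part will be the uniform Lipschitz control $\OpNorm{M_{\theta_0,\theta} - M_{\theta_0,\theta_0}}{2}{2} \lesssim \LpNorm{\theta-\theta_0}{2}/n$, which requires translating pointwise polynomial decay of $K$ and $\partial K/\partial\theta_j$ from $\paren{A3}$ into operator-norm and Frobenius-norm bounds on the associated matrices and their $\theta$-derivatives. This is precisely the technical content packaged in Proposition \ref{UnifBndEigValCorrMatrix} and the auxiliary results of Appendix \ref{AuxRes}, so appealing to those lemmas should make the argument go through with the stated constants.
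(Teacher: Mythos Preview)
Your approach is correct and, in fact, cleaner than the paper's. The paper does not split at $\theta_0$; instead it introduces a covering net $\cc{N}_{r'_n}\paren{\Theta_{\theta_0}(r_n)}$, lets $\beta_{\hat\theta_n}$ be the nearest net point to $\hat\theta_n$, and splits into three pieces:
\[
T_{21}(\theta)=\abs{\tr\paren{M_{\theta_0,\beta_\theta}}-1},\quad
T_{22}(\theta)=\abs{Z^\top M_{\theta_0,\beta_\theta}Z-\tr\paren{M_{\theta_0,\beta_\theta}}},\quad
T_{23}(\theta)=\abs{Z^\top(M_{\theta_0,\theta}-M_{\theta_0,\beta_\theta})Z}.
\]
It then handles $T_{21}$ deterministically via Proposition~\ref{UnifBndEigValCorrMatrix}, $T_{22}$ by Hanson--Wright plus a union bound over the $o(n^m)$ net points, and $T_{23}$ by the discretization Lemma~\ref{DiscritizationLemma2}. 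Your decomposition at the fixed point $\theta_0$ sidesteps the net entirely: a single Hanson--Wright application handles $(\mathrm{I})$, and the uniform operator-norm Lipschitz bound on $\theta\mapsto M_{\theta_0,\theta}$ combined with one $\chi^2_n$ tail handles $(\mathrm{II})$. What the paper's route buys is that the same net machinery is already in place from Claim~\ref{Claim1Thm1} and Lemma~\ref{DiscritizationLemma}, so it reuses existing infrastructure; what your route buys is a shorter, more transparent argument.

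Two small caveats. First, your reference to Lemma~\ref{Hanson-WrightConcIneq} for term $(\mathrm{I})$ is off: that lemma concerns the specific difference $H_{\theta_2,\theta_2}-H_{\theta_2,\theta_1}$, not a generic centered quadratic form. You want the raw Hanson--Wright inequality (Theorem~1.1 of \cite{M.Rudelson}), exactly as the paper invokes it for $T_{22}$. Second, your operator-norm Lipschitz bound on $M_{\theta_0,\theta}-M_{\theta_0,\theta_0}$ will pick up additional $\Lambda_{\max}$ factors from the $K_n^{1/2}(\theta_0)$ conjugation and from the $\LpNorm{K_n(\theta)}{2}^{-2}$ denominator, so you will not recover the \emph{exact} constant $C=2(\ff{D}_{\max}C_{\min}+C'\Lambda_{\max}\sqrt{m})$ stated parenthetically. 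You will get a bounded $C$ of the right order, which is all the claim actually requires.
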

	
\begin{proof}[Proof of Claim \ref{Claim2Thm1}]
Recall $r_n$ and $r'_n$ form Claim \ref{Claim1Thm1}. Obviously,
\begin{equation*}
\pi'_n \leq T_1+T_2 \coloneqq \Pr\set{\hat{\theta}_n\in \Theta^c_{\theta_0}\paren{r_n}} + \Pr\set{\paren{\abs{\frac{\hat{\phi}_n}{\phi_0}-1}\geq C\sqrt{\frac{\ln n}{n}}} \bigcap \paren{\hat{\theta}_n\in \Theta_{\theta_0}\paren{r_n}} }.
\end{equation*}
Since $T_1$ tends to zero (via Claim \ref{Claim1Thm1}), it suffices to show that $T_2$ is a diminishing sequence as $n\rightarrow\infty$. Let $\beta_{\hat{\theta}_n}$ be the closest point in $\cc{N}_{r'_n}\paren{\Theta_{\theta_0}\paren{r_n}}$ (which is a deterministic set) to $\hat{\theta}_n$. Based upon identity \eqref{OptValInvFreeOptProb}, we have
\begin{equation}\label{PhiHatnPhi0Ratio}
\abs{\frac{\hat{\phi}_n}{\phi_0}-1}= \abs{ Z^\top M_{\theta_0,\hat{\theta}_n}Z - 1}
\end{equation}
Given that $\hat{\theta}_n$ belongs to $\Theta_{\theta_0}\paren{r_n}$, applying the triangle inequality on the right hand side of the identity \eqref{PhiHatnPhi0Ratio} yields that, almost surely
	
\begin{eqnarray}\label{UppBndT2}
\abs{\frac{\hat{\phi}_n}{\phi_0}-1} &\leq& \abs{ Z^\top M_{\theta_0,\hat{\theta}_n}Z-Z^\top M_{\theta_0,\beta_{\hat{\theta}_n}}Z} + \abs{Z^\top M_{\theta_0,\beta_{\hat{\theta}_n}}Z -\tr\paren{M_{\theta_0,\beta_{\hat{\theta}_n}}} } + \abs{\tr\paren{M_{\theta_0,\beta_{\hat{\theta}_n}}}-1}\nonumber\\
&\RelNum{\paren{D}}{\leq}& \sup_{\theta\in\Theta_{\theta_0}\paren{r_n}} \set{\; \abs{\tr\paren{M_{\theta_0,\beta_{\theta}}-1}} + \abs{Z^\top M_{\theta_0,\beta_{\theta}}Z -\tr\paren{M_{\theta_0,\beta_{\theta}}} } + \abs{ Z^\top M_{\theta_0,\theta}Z-Z^\top M_{\theta_0,\beta_{\theta}}Z}\; }\nonumber\\
&\coloneqq& \sup_{\theta\in\Theta_{\theta_0}\paren{r_n}} \Bigset{ T_{21}\paren{\theta}+T_{22}\paren{\theta}+T_{23}\paren{\theta} }.
\end{eqnarray}
Replacing the random quantities $\hat{\theta}_n$ and $\beta_{\hat{\theta}_n}$ with the nonrandom parameters $\theta$ and $\beta_{\theta}$ is the key advantage of $\paren{D}$. Now we control the terms $T_{21},T_{22}$ and $T_{23}$ from above, uniformly over $\Theta_{\theta_0}\paren{r_n}$. Lemma \ref{DiscritizationLemma2} guarantees the existence of a scalar $C_0$, for which 
\begin{equation}\label{UppBndT23}
\lim\limits_{n\rightarrow\infty}\Pr\paren{\sup_{\theta\in\Theta_{\theta_0}\paren{r_n}} T_{23}\paren{\theta}\leq C_0r'_n} \rightarrow 1. 
\end{equation}
$C_0$ depends on $\Lambda_{\max}$ and $\ff{D}_{\max}$. See Lemma \ref{DiscritizationLemma2} for its exact formulation. For large enough $n$, we have
\begin{equation}\label{2UppBndT23}
C_0r'_n=\cc{O}\paren{n^{-1}\sqrt{\ln n}} < \frac{1}{2}\ff{D}_{\max}C_{\min}\sqrt{\frac{\ln n}{n}}.
\end{equation}
Now we control $T_{21}\paren{\theta}$ uniformly from above using Proposition \ref{UnifBndEigValCorrMatrix}. The goal is to show that
\begin{equation}\label{UppBndT21}
\sup_{\theta\in\Theta_{\theta_0}\paren{r_n}} T_{21}\paren{\theta} <  \frac{3}{2}\ff{D}_{\max}C_{\min}\sqrt{\frac{\ln n}{n}}.
\end{equation}
Applying the Cauchy-Schwartz inequality shows that (recalling $M_{\theta_1,\theta_2}$ from \eqref{MandH})
\begin{eqnarray}\label{1UppBndT21}
\sup_{\theta\in\Theta_{\theta_0}\paren{r_n}} T_{21}\paren{\theta} &=& \sup_{\theta\in\Theta_{\theta_0}\paren{r_n}} \abs{\frac{\InnerProd{K_n\paren{\beta_{\theta}}}{K_n(\theta_0)}}{\LpNorm{K_n\paren{\beta_{\theta}}}{2}^2}-1} = \sup_{\theta\in\Theta_{\theta_0}\paren{r_n}} \abs{\frac{\InnerProd{K_n\paren{\beta_{\theta}}}{K_n\paren{\beta_{\theta}}-K_n(\theta_0)}}{\LpNorm{K_n\paren{\beta_{\theta}}}{2}^2}}\nonumber\\
&\leq& \sup_{\theta\in\Theta_{\theta_0}\paren{r_n}} \frac{\LpNorm{K_n\paren{\beta_{\theta}}-K_n(\theta_0)}{2}}{\LpNorm{K_n\paren{\beta_{\theta}}}{2}}\leq \sup_{\theta\in\Theta_{\theta_0}\paren{r_n}}\frac{\LpNorm{K_n\paren{\beta_{\theta}}-K_n(\theta_0)}{2}}{\sqrt{n}}.
\end{eqnarray}
Furthermore, using the part $\paren{b}$ of the Proposition \ref{UnifBndEigValCorrMatrix}, we get
\begin{eqnarray}\label{2UppBndT21}
\sup_{\theta\in\Theta_{\theta_0}\paren{r_n}}\frac{\LpNorm{K_n\paren{\beta_{\theta}}-K_n(\theta_0)}{2}}{\sqrt{n}} &\leq& \ff{D}_{\max}\sup_{\theta\in\Theta_{\theta_0}\paren{r_n}} \LpNorm{\theta_0-\beta_{\theta}}{2}\nonumber\\
&\leq& \ff{D}_{\max}\sup_{\theta\in\Theta_{\theta_0}\paren{r_n}} \paren{\LpNorm{\theta-\beta_{\theta}}{2} + \LpNorm{\theta_0-\theta}{2}}\nonumber\\
&\leq& \ff{D}_{\max}\paren{r_n+r'_n} < \frac{3}{2}\ff{D}_{\max}C_{\min}\sqrt{\frac{\ln n}{n}}.
\end{eqnarray}
Note that the last inequality holds for large enough $n$. So \eqref{2UppBndT23} follows from replacing \eqref{2UppBndT21} into \eqref{1UppBndT21}. In the sequel we achieve a uniform upper bound on $T_{22}$. For brevity define $u_n \coloneqq \Lambda_{\max}\sqrt{mn^{-1}\ln n}$ and select a large enough universal constant $C'$. Recall that $\beta_{\theta}$, by its definition, is an element of the finite set $\cc{N}_{r'_n}\paren{\Theta_{\theta_0}\paren{r_n}}$. Thus, 
\begin{equation*}
\Pr\paren{\sup_{\theta\in\Theta_{\theta_0}\paren{r_n}} T_{22}\paren{\theta}\geq C'u_n}\leq \abs{\cc{N}_{r'_n}\paren{\Theta_{\theta_0}\paren{r_n}}}\sup_{\theta\in\Theta_{\theta_0}\paren{r_n}}\Pr\paren{T_{22}\paren{\theta}\geq C'u_n}.
\end{equation*}
The same trick as \eqref{CovNumUppBnd} leads to $\abs{\cc{N}_{r'_n}\paren{\Theta_{\theta_0}\paren{r_n}}} = o\paren{n^m}$. So, it is adequate to show that 
\begin{equation}\label{UppBndT22}
\Pr\paren{T_{22}\paren{\theta}\geq C'u_n}\leq n^{-m},\quad\forall\;\theta\in\Theta_{\theta_0}\paren{r_n}.
\end{equation}
We employ Hanson-Wright inequality (Theorem $1.1$, \cite{M.Rudelson}) for obtaining a probabilistic upper bound on $T_{22}\paren{\theta}$ (for a fixed $\theta$). 
\begin{equation*}
\Pr\set{ T_{22}\paren{\theta}\geq C'\sqrt{m\ln n}\paren{\LpNorm{M_{\theta_0,\beta_{\theta}}}{2} \vee \OpNorm{M_{\theta_0,\beta_{\theta}}}{2}{2}\sqrt{m\ln n}} }\leq n^{-m},\quad\forall\;\theta\in\Theta_{\theta_0}\paren{r_n}.
\end{equation*}
For simplifying the upper bound on $T_{22}\paren{\theta}$, we control the operator and Frobenius norms of $M_{\theta_0,\beta_{\theta}}$ from above. The following inequalities can be easily justified by Proposition \ref{UnifBndEigValCorrMatrix}.  
\begin{equation}\label{UppBndNormsM}
\LpNorm{M_{\theta_0,\beta_{\theta}}}{2}\leq \Lambda_{\max} n^{-1/2},\quad \OpNorm{M_{\theta_0,\beta_{\theta}}}{2}{2}\leq \Lambda^2_{\max}n^{-1}.
\end{equation}
Replacing \eqref{UppBndNormsM} into Hanson-Wright inequality justifies \eqref{UppBndT22}. Hence
\begin{equation}\label{2UppBndT22}
\Pr\paren{\sup_{\theta\in\Theta_{\theta_0}\paren{r_n}} T_{22}\paren{\theta}\geq C'u_n}\leq \abs{\cc{N}_{r'_n}\paren{\Theta_{\theta_0}\paren{r_n}}} n^{-m}\rightarrow 0,\quad\mbox{as}\; n\rightarrow\infty.
\end{equation}
Substituting inequalities \eqref{UppBndT23}, \eqref{2UppBndT23}, \eqref{UppBndT21} and \eqref{2UppBndT22} into \eqref{UppBndT2} concludes the proof by confirming that $T_2$ goes to zero as $n\rightarrow\infty$.
\end{proof}
Combining Claims \ref{Claim1Thm1} and \ref{Claim2Thm1} ends the proof.
\end{proof}

\begin{proof}[Proof of Theorem \ref{LocMinRate}]
Let $r_n = C\sqrt{n^{-1}\ln n}$ for some strictly positive $C$ whose exact form will be given shortly. Let $(\hat{\phi}_n,\hat{\theta}_n)$ be any stationary point of the optimization problem \eqref{InvFreeOptProb}. Due to the space constraint, we just show that $\hat{\theta}_n\in \Theta_{\theta_0}\paren{r_n}$. The same technique as Claim \ref{Claim2Thm1} in the proof of Theorem \ref{GlobMinRate} attains the convergence rate of $\hat{\phi}_n$. Notice that $\hat{\theta}_n$ is a stationary point of the optimization problem \eqref{OptValInvFreeOptProb}. We show that with a high probability there is no $\theta\in\Theta^c_{\theta_0}\paren{r_n}$ for which the gradient of the objective function in \eqref{OptValInvFreeOptProb} be exactly zero. In order to substantiate our claim, we prove that the absolute value of the inner product of the gradient and a fixed non-zero vector is uniformly greater than zero on $\Theta^c_{\theta_0}\paren{r_n}$.
	
Let us first give the closed form of the gradient function in \eqref{OptValInvFreeOptProb}, which will be denoted by $\brac{G_l\paren{\theta}}^m_{l=1}$. A few lines of direct algebra leads to 
\begin{eqnarray*}
G_l\paren{\theta}&\coloneqq& Z^\top P^l_{\theta_0,\theta}Z\coloneqq \frac{\partial}{\partial\theta_l} Z^\top H_{\theta_0,\theta}Z\\
&=& Z^\top K^{1/2}_n\paren{\theta_0}\set{ \frac{\partial}{\partial\theta_l}K_n\paren{\theta}- \InnerProd{\frac{\partial}{\partial\theta_l}K_n\paren{\theta}}{K_n\paren{\theta}} \frac{K_n\paren{\theta}}{\LpNorm{K_n\paren{\theta}}{2}^2 } } K^{1/2}_n\paren{\theta_0}Z,
\end{eqnarray*}
where $Z\in\bb{R}^n$ is a standard Gaussian vector and $\theta_l,\;l=1,\ldots,m$ is the $l^{\mbox{th}}$ component of $\theta$. Trivially, $G_l(\hat{\theta}_n) = 0$ for any $l=1,\ldots,m$. Choose an arbitrary unit norm vector $\lambda\in\bb{R}^{m}$ and define $Y \coloneqq K^{1/2}_n\paren{\theta_0}Z$. Observe that
\begin{equation}\label{ffFnTheta}
W\paren{\theta} \coloneqq \sum\limits_{j=1}^{m}\lambda_j G_j\paren{\theta} = Y^\top \set{ \sum\limits_{j=1}^{m}\lambda_j\frac{\partial}{\partial\theta_j}K_n\paren{\theta}- \InnerProd{\sum\limits_{j=1}^{m}\lambda_j\frac{\partial}{\partial\theta_j}K_n\paren{\theta}}{K_n\paren{\theta}} \frac{K_n\paren{\theta}}{\LpNorm{K_n\paren{\theta}}{2}^2 } } Y.
\end{equation}
We can conclude that $\hat{\theta}_n\in \Theta_{\theta_0}\paren{r_n}$ in probability, if we can prove that
\begin{equation}\label{SuffCondOptilStatPt}
\Pr\paren{\inf_{\theta\in \Theta^c_{\theta_0}\paren{r_n}} \abs{W\paren{\theta}} > 0}\rightarrow 1, \quad\mbox{as}\;n\rightarrow\infty.
\end{equation}
\eqref{SuffCondOptilStatPt} is immediate after we show the two following claims. 
	
\setcounter{clawithinpf}{0}
	
\begin{clawithinpf}\label{Claim1Thm2}
There exists a positive finite constant $C_0$ (depending on $K$, $\Theta$ and $\cc{D}_n$) such that 
\begin{equation}\label{Claim1Thm2Statement}
\lim\limits_{n\rightarrow\infty}\Pr\paren{\sup_{\theta\in\Theta}\abs{W\paren{\theta}- \sum\limits_{j=1}^{m}\lambda_j\tr \paren{P^{j}_{\theta_0,\theta}}}\geq C_0\sqrt{n\ln n}} = 0.
\end{equation}
\end{clawithinpf}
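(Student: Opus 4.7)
The structural observation is that $W(\theta) = Z^\top \tilde A_\theta Z$ is a quadratic form in the standard Gaussian vector $Z$, where $\tilde A_\theta \coloneqq K_n^{1/2}(\theta_0) B_\theta K_n^{1/2}(\theta_0)$ and $B_\theta$ denotes the symmetric matrix in braces in \eqref{ffFnTheta}; in particular ${\rm E} W(\theta) = \tr(\tilde A_\theta) = \sum_{j=1}^m \lambda_j \tr(P^j_{\theta_0,\theta})$. The plan is to mimic the template established by Claims \ref{Claim1Thm1} and \ref{Claim2Thm1} in the proof of Theorem \ref{GlobMinRate}: a pointwise Hanson--Wright estimate, combined with a polynomial-size $\epsilon$-net on $\Theta$ and a Lipschitz approximation step. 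The novelty relative to Theorem \ref{GlobMinRate} is that the Lipschitz modulus of $W$ involves second derivatives of $K_n$, which is precisely why Assumption \ref{Covar&ParamSpAssu2} strengthens $(A3)$ to $q=2$.

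For the pointwise step, the polynomial decay of $K$ and $\partial K/\partial\theta_j$ from $(A3)$, together with Proposition \ref{UnifBndEigValCorrMatrix}, would give the uniform bounds $\sup_{\theta\in\Theta}\OpNorm{\tilde A_\theta}{2}{2} = \cc{O}(1)$ and $\sup_{\theta\in\Theta}\LpNorm{\tilde A_\theta}{2} = \cc{O}(\sqrt{n})$, using that $\LpNorm{K_n(\theta)}{2}^2 \asymp n$ so that the rank-one correction in $B_\theta$ has operator norm $\cc{O}(1)$ and Frobenius norm $\cc{O}(\sqrt{n})$. Hanson--Wright (Theorem 1.1 of \cite{M.Rudelson}) with $t = C_1\sqrt{n\ln n}$ would then produce, for any fixed $\theta$,
\[
\Pr\paren{\abs{W(\theta)-\tr(\tilde A_\theta)}\geq C_1\sqrt{n\ln n}}\leq n^{-(m+2)},
\]
provided $C_1$ is chosen sufficiently large relative to the bounds above. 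A volume argument as in \eqref{CovNumUppBnd} produces a deterministic net $\cc N_{r_n'}(\Theta)$ of cardinality $\cc{O}(n^m)$ at scale $r_n' = n^{-1}$, so a union bound over this net absorbs the polynomial factor.

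To close the gap between $\theta$ and its nearest net point $\beta_\theta$, I would use the pathwise inequality $|W(\theta)-W(\beta_\theta)| \leq \LpNorm{Y}{2}^2\,\OpNorm{B_\theta - B_{\beta_\theta}}{2}{2}$, where $Y = K_n^{1/2}(\theta_0) Z$. The map $\theta\mapsto B_\theta$ involves $\partial K_n(\theta)/\partial\theta_j$ and $K_n(\theta)$, so its derivative in $\theta_k$ involves second derivatives of $K_n$; under $(A3)$ with $q=2$, Proposition \ref{UnifBndEigValCorrMatrix} applied term-by-term yields $\sup_\theta \OpNorm{\partial B_\theta/\partial\theta_k}{2}{2} = \cc{O}(1)$, so $\OpNorm{B_\theta - B_{\beta_\theta}}{2}{2} = \cc{O}(r_n') = \cc{O}(n^{-1})$. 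Since $\LpNorm{Y}{2}^2 \leq \OpNorm{K_n(\theta_0)}{2}{2}\,\LpNorm{Z}{2}^2 = \cc{O}_{\bb P}(n)$, the stochastic Lipschitz error is $\cc{O}_{\bb P}(1)$; an analogous deterministic computation using $|\tr(K_n(\theta_0)\partial B_\theta/\partial\theta_k)|\leq \LpNorm{K_n(\theta_0)}{2}\LpNorm{\partial B_\theta/\partial\theta_k}{2} = \cc{O}(n)$ also gives $|\tr(\tilde A_\theta) - \tr(\tilde A_{\beta_\theta})| = \cc{O}(1)$. Both errors are negligible compared with $\sqrt{n\ln n}$, yielding \eqref{Claim1Thm2Statement}. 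The main obstacle is the operator-norm control of $\partial B_\theta/\partial\theta_k$: the inversion-free structure of $B_\theta$ forbids any cancellation trick tied to the precision matrix, so the bound must flow entirely from polynomial decay of the second-order derivatives of $K$, which is exactly the strengthening of $(A3)$ imposed in Assumption \ref{Covar&ParamSpAssu2}.
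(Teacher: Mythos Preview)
Your proposal is correct and follows essentially the same route as the paper. The paper packages the three steps you describe (pointwise Hanson--Wright, union bound over a polynomial-size net, operator-norm Lipschitz control of the gap) into the abstract Proposition~\ref{ModalProp}, and then verifies its hypotheses for the family $\Pi_n(\theta)=\sum_j\lambda_j P^j_{\theta_0,\theta}$ via Lemma~\ref{MaodlLem3}, which is exactly your Lipschitz bound on $\theta\mapsto B_\theta$ using $(A3)$ with $q=2$; the only extra ingredient in the paper is a lower bound $\LpNorm{T_n(\theta)}{2}\gtrsim\sqrt{n}$ (your $B_\theta$), needed to check condition~(c) of Proposition~\ref{ModalProp}, but this is not required in your direct argument since you only use the upper bound $\LpNorm{\tilde A_\theta}{2}=\cc{O}(\sqrt{n})$ in Hanson--Wright.
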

	
\begin{clawithinpf}\label{Claim2Thm2}
The succeeding inequality holds for large enough $n$ ($C_0$ is from the previous claim).
\begin{equation*}
\inf_{\theta\in \Theta^c_{\theta_0}\paren{r_n}}\abs{\sum\limits_{j=1}^{m}\lambda_j\tr \paren{P^{j}_{\theta_0,\theta}}}> C_0\sqrt{n\ln n}
\end{equation*}
\end{clawithinpf}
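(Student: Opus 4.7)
The plan is to identify $T(\theta) := \sum_{j=1}^{m}\lambda_j\tr(P^{j}_{\theta_0,\theta})$ with a scaled directional derivative of a functional that is strictly maximized at $\theta_0$, and then use the identifiability hypothesis to obtain the required uniform lower bound. The starting observation is that, since $\mathbb{E}[Z^\top H_{\theta_0,\theta}Z]=\tr H_{\theta_0,\theta}=S(\theta):=\langle K_n(\theta_0),K_n(\theta)\rangle/\LpNorm{K_n(\theta)}{2}$, differentiating inside the trace gives
$$T(\theta)\;=\;\LpNorm{K_n(\theta)}{2}\,\langle\lambda,\nabla_\theta S(\theta)\rangle.$$
By Cauchy--Schwarz and (A2), $S$ attains its unique maximum at $\theta_0$, so $\nabla S(\theta_0)=0$ and a second-order Taylor expansion gives $\nabla S(\theta)=H(\theta-\theta_0)+O(\sqrt n\LpNorm{\theta-\theta_0}{2}^2)$, with Hessian $H=-Q/\LpNorm{K_n(\theta_0)}{2}$ and $Q_{jk}=\langle V_j^\perp,V_k^\perp\rangle$, where $V_j^\perp$ is the Frobenius-orthogonal component of $\partial_{\theta_j}K_n(\theta_0)$ off $\mathrm{span}(K_n(\theta_0))$.

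Next I would use (A4.a) pointwise for each unit $u\in\bb R^m$ and sum over $s\in\cc D_n$ to obtain $\LpNorm{\sum_j u_j V_j}{2}^2\gtrsim n$; combined with the bounded condition number of $K_n(\theta_0)$ from Proposition \ref{UnifBndEigValCorrMatrix} this transfers to the uniform quadratic lower bound $u^\top Q u\gtrsim n\LpNorm{u}{2}^2$, so that $\lambda_{\min}(-H)\gtrsim\sqrt n$ and $\lambda_{\min}(Q)\gtrsim n$. The scaling $\LpNorm{K_n(\theta)}{2}\asymp\sqrt n$ (also from Proposition \ref{UnifBndEigValCorrMatrix}) therefore makes every factor in $T(\theta)$ accounted for.

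I would then split $\Theta^c_{\theta_0}(r_n)$ into a \emph{far} piece $\{\LpNorm{\theta-\theta_0}{2}>\delta_1\}$ and a \emph{local} piece $\{r_n\le\LpNorm{\theta-\theta_0}{2}\le\delta_1\}$ for a small fixed $\delta_1$. On the far piece, the strict-maximum property together with compactness of $\Theta$ and continuity of $\nabla S$ yield $|\langle\lambda,\nabla S(\theta)\rangle|\gtrsim\sqrt n$ uniformly in $\lambda$, hence $|T(\theta)|\gtrsim n\gg\sqrt{n\ln n}$. On the local piece, the leading Taylor term $\LpNorm{K_n(\theta)}{2}\,\lambda^\top H(\theta-\theta_0)$ is comparable in magnitude to $n\LpNorm{\theta-\theta_0}{2}$, which exceeds $C_0\sqrt{n\ln n}$ once the constant $C$ inside $r_n=C\sqrt{\ln n/n}$ is taken large enough to both dominate the $O(\sqrt n\LpNorm{\theta-\theta_0}{2}^2)$ Taylor remainder and beat the Claim 1 constant.

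The subtle obstacle is the local lower bound on $|\lambda^\top H(\theta-\theta_0)|$ for the \emph{fixed} $\lambda$ inherited from Claim 1: the spectral bound $\lambda_{\min}(-H)\gtrsim\sqrt n$ only controls $|u^\top H u|$ and not $|\lambda^\top H u|$ for unrelated directions $\lambda$ and $u$. The way I would resolve it is to use (A4.a) in its full form -- a pointwise spatial bound valid for \emph{every} unit direction in $\cc S^m$, not merely the eigenvectors of $Q$ -- so that the bilinear form $(\lambda,u)\mapsto|\lambda^\top Q u|$ inherits a lower bound of order $n\LpNorm{u}{2}$ along every direction $u=(\theta-\theta_0)/\LpNorm{\theta-\theta_0}{2}$ compatible with the perturbed-regular-lattice geometry of $\cc D_n$ from Assumption \ref{ObsLocAssu}. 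This identifiability-meets-geometry step, which genuinely uses that (A4.a) is assumed uniformly in $\lambda\in\cc S^m$ rather than only in its spectral consequence on $Q$, is the technical heart of Claim 2.
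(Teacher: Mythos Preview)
Your reduction $T(\theta)=\LpNorm{K_n(\theta)}{2}\,\langle\lambda,\nabla_\theta S(\theta)\rangle$ is correct, but both halves of your near/far split have genuine gaps.

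On the far piece: that $S$ has a strict global maximum at $\theta_0$ does \emph{not} preclude other critical points (saddles or local maxima), so you cannot conclude $\nabla S(\theta)\ne 0$ on $\{\LpNorm{\theta-\theta_0}{2}>\delta_1\}$ from compactness alone; and even at noncritical $\theta$, the directional derivative $\langle\lambda,\nabla S(\theta)\rangle$ for a \emph{fixed} $\lambda$ vanishes whenever $\lambda\perp\nabla S(\theta)$. On the local piece, the ``subtle obstacle'' you flag is fatal as stated: (A4.a) yields only the quadratic bound $u^\top Q u\gtrsim n\LpNorm{u}{2}^2$, i.e.\ a spectral lower bound on $Q$, which for $m\ge 2$ says nothing about the bilinear quantity $|\lambda^\top Q u|$ (take $Q=nI_m$ and $\lambda\perp u$). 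Invoking (A4.a) ``uniformly in $\lambda\in\cc S^m$'' does not help, because it remains a one-direction-at-a-time statement and cannot control the angle between $\lambda$ and $H(\theta-\theta_0)$.

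The paper neither Taylor-expands nor splits near/far. Writing $v_n(\theta)=K_n(\theta)/\LpNorm{K_n(\theta)}{2}$, it decomposes $v_n(\theta_0)$ and the tangent direction $\lim_{\gamma\to 0}\gamma^{-1}(v_n(\theta+\gamma\lambda)-\langle\cdot\rangle v_n(\theta))$ into their components along $v_n(\theta)$ and unit orthogonal residuals $\tau_1 u_1$, $\tau_2 u_2$, obtaining $|\mathbb E\, W(\theta)|\ge n\,\tau_1\,(\tau_2/\gamma)\,|\langle u_1,u_2\rangle|$. The scalars are bounded \emph{globally} on $\Theta^c_{\theta_0}(r_n)$ (no locality needed) via Lemma~\ref{ModalLem2} and Proposition~\ref{UnifBndEigValCorrMatrix}: $\tau_1\gtrsim\LpNorm{\theta-\theta_0}{2}\ge r_n$ and $\tau_2/\gamma\gtrsim 1$. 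The remaining factor $|\langle u_1,u_2\rangle|\ge\alpha_0>0$ is then asserted via Assumption~\ref{Covar&ParamSpAssu2}. This angle condition is precisely the paper's counterpart of your bilinear obstacle, but is handled by assertion rather than by a detailed estimate; following the paper's route would therefore replace your Taylor-remainder control and near/far dichotomy with a single global geometric decomposition, yet would terminate at the same unresolved angle condition rather than at a self-contained bilinear bound.
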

	
The Claim \ref{Claim1Thm2} provides a uniform concentration inequality regarding the random function $W\paren{\theta}$. In the Claim \ref{Claim2Thm2}, we obtain a uniform lower bound on the expected value of $W\paren{\theta}$ over $\Theta^c_{\theta_0}\paren{r_n}$.
	
\begin{proof}[Proof of Claim \ref{Claim1Thm2}]
For simplicity, define
\begin{equation}\label{Tn}
T_{n}\paren{\theta}\coloneqq  \set{\sum\limits_{j=1}^{m}\lambda_j\frac{\partial}{\partial\theta_j}K_n\paren{\theta}- \InnerProd{\sum\limits_{j=1}^{m}\lambda_j\frac{\partial}{\partial\theta_j}K_n\paren{\theta}}{K_n\paren{\theta}} \frac{K_n\paren{\theta}}{\LpNorm{K_n\paren{\theta}}{2}^2 }}.
\end{equation}
The matrix $T_{n}\paren{\theta}$ has appeared in the defining formula for $W\paren{\theta}$ in \eqref{ffFnTheta}. The basic facts of the directional derivative yield
\begin{eqnarray}\label{DirDeriv}
T_{n}\paren{\theta} &=& \lim\limits_{\gamma\rightarrow 0} \gamma^{-1}\set{ K_n\paren{\theta+\gamma\lambda} - K_n\paren{\theta}- \InnerProd{K_n\paren{\theta+\gamma\lambda} - K_n\paren{\theta}}{K_n\paren{\theta}} \frac{K_n\paren{\theta}}{\LpNorm{K_n\paren{\theta}}{2}^2 } }\nonumber\\
&=& \lim\limits_{\gamma\rightarrow 0} \gamma^{-1}\set{ K_n\paren{\theta+\gamma\lambda} - \InnerProd{K_n\paren{\theta+\gamma\lambda} }{K_n\paren{\theta}} \frac{K_n\paren{\theta}}{\LpNorm{K_n\paren{\theta}}{2}^2 } }.
\end{eqnarray}
Proposition \ref{ModalProp} is the essential tool in our proof. As inequality \eqref{Claim1Thm2Statement} is the same as \eqref{UnifDeviationBound}, it suffices to justify the three conditions of Proposition \ref{ModalProp}. We verify conditions $\paren{a}$ and $\paren{b}$ simultaneously. Choose $\theta,\theta'\in\Theta$ in an arbitrary way. The following inequalities are obvious implications of Proposition \ref{UnifBndEigValCorrMatrix} ($\Lambda_{\max}$, $\Lambda_{\min}$ and $\ff{D}_{\min}$ are introduced there).
\begin{align*}
&\OpNorm{\sum\limits_{j=1}^{m}\lambda_j P^{j}_{\theta_0,\theta}}{2}{2}\leq \Lambda_{\max}\OpNorm{T_{n}\paren{\theta}}{2}{2},\\
&\OpNorm{\sum\limits_{j=1}^{m}\lambda_j P^{j}_{\theta_0,\theta'}-\sum\limits_{j=1}^{m}\lambda_j P^{j}_{\theta_0,\theta}}{2}{2}\leq \Lambda_{\max}\OpNorm{T_{n}\paren{\theta'}-T_{n}\paren{\theta}}{2}{2},\\
& \OpNorm{\sum\limits_{j=1}^{m}\lambda_j P^{j}_{\theta_0,\theta}}{2}{2}\LpNorm{\sum\limits_{j=1}^{m}\lambda_j P^{j}_{\theta_0,\theta}}{2}^{-1}\leq \frac{\Lambda_{\max}}{\Lambda_{\min}}\frac{\OpNorm{T_{n}\paren{\theta}}{2}{2}}{\LpNorm{T_{n}\paren{\theta}}{2}}.
\end{align*}
Thus, it is sufficient to justify the conditions of Proposition \ref{ModalProp} for $\set{T_{n}\paren{\theta}}_{\theta\in\Theta}$. According to Lemma \ref{MaodlLem3}, $T_{n}\paren{\theta}$ admits conditions $\paren{a}$ and $\paren{b}$. As the operator norm of $T_{n}\paren{\theta}$ is uniformly bounded over $\Theta$, condition $\paren{c}$ holds if $\LpNorm{T_{n}\paren{\theta}}{2}$ grows faster than $\sqrt{\ln n}$. We aim to show that $\LpNorm{T_{n}\paren{\theta}}{2}=\cc{O}\paren{\sqrt{n}}$.
\begin{eqnarray}\label{LowBndFrobNormT}
\LpNorm{T_{n}\paren{\theta}}{2}&\RelNum{\paren{A}}{=}& \lim\limits_{\gamma\searrow 0} \gamma^{-1} \set{ \LpNorm{K_n\paren{\theta+\gamma\lambda}}{2}^2 - \abs{\frac{ \InnerProd{K_n\paren{\theta}}{K_n\paren{\theta+\gamma\lambda}} }{ \LpNorm{K_n\paren{\theta}}{2} }}^2  }^{1/2} \nonumber\\
&=& \lim\limits_{\gamma\searrow 0} \frac{ \LpNorm{K_n\paren{\theta+\gamma\lambda}}{2} }{\gamma\sqrt{2}} \set{ 2-2\abs{\InnerProd{ \frac{K_n\paren{\theta}}{\LpNorm{K_n\paren{\theta}}{2}} }{ \frac{K_n\paren{\theta+\gamma\lambda}}{\LpNorm{K_n\paren{\theta+\gamma\lambda}}{2}} }}^2 }^{1/2} \nonumber\\
&\geq& \lim\limits_{\gamma\searrow 0} \frac{ \LpNorm{K_n\paren{\theta+\gamma\lambda}}{2} }{\gamma\sqrt{2}} \set{ 2-2\abs{\InnerProd{ \frac{K_n\paren{\theta}}{\LpNorm{K_n\paren{\theta}}{2}} }{ \frac{K_n\paren{\theta+\gamma\lambda}}{\LpNorm{K_n\paren{\theta+\gamma\lambda}}{2}} }} }^{1/2} \nonumber\\
&\geq& \lim\limits_{\gamma\searrow 0} \paren{\gamma\sqrt{2}}^{-1} \LpNorm{ K_n\paren{\theta+\gamma\lambda} - \frac{ \LpNorm{K_n\paren{\theta+\gamma\lambda}}{2} }{\LpNorm{K_n\paren{\theta}}{2}} K_n\paren{\theta} }{2}\nonumber\\
&\RelNum{\paren{B}}{\geq}& \lim\limits_{\gamma\searrow 0}  \frac{\LpNorm{ K_n\paren{\theta+\gamma\lambda} - K_n\paren{\theta} }{2}}{ \Lambda_{\max}\gamma\sqrt{2} }\RelNum{\paren{C}}{\geq} \frac{\ff{D}_{\min}}{\Lambda_{\max}}\sqrt{\frac{n}{2}}.
\end{eqnarray}
		
Trivial calculations can prove identity $\paren{A}$. Moreover, $\paren{B}$ and $\paren{C}$ are immediate consequences of Lemma \ref{ModalLem2} and Proposition \ref{UnifBndEigValCorrMatrix}, respectively.
\end{proof}
	
\begin{proof}[Proof of Claim \ref{Claim2Thm2}]
For notational convenience, define $v_{n}\paren{\theta}\coloneqq \LpNorm{K_n\paren{\theta}}{2}^{-1} K_n\paren{\theta}$ for any $\theta\in\Theta$. Also, recall that $r_n = C\sqrt{n^{-1}\ln n}$ for some $C$ to be chosen. We first derive a lower bound on $\abs{{\rm E} \set{W\paren{\theta}}}$ in terms of $v_n$ vectors. Regrouping the terms in \eqref{DirDeriv} leads to
\begin{eqnarray*}
\abs{{\rm E} \set{W\paren{\theta}} } &=& \abs{\InnerProd{K_n\paren{\theta_0}}{T_n\paren{\theta}}}\\
&=&\lim\limits_{\gamma\rightarrow 0}\frac{\LpNorm{K_n\paren{\theta+\gamma\lambda}}{2}}{\gamma \LpNorm{K_n(\theta_0)}{2}^{-1}}\Bigl |\InnerProd{v_{n}(\theta_0)}{v_{n}\paren{\theta+\gamma\lambda}} -\InnerProd{v_{n}\paren{\theta+\gamma\lambda}}{v_{n}\paren{\theta}} \InnerProd{v_{n}\paren{\theta}}{v_{n}(\theta_0)} \Bigr|\\
&\geq& n \lim\limits_{\gamma\rightarrow 0} \abs{\frac{\InnerProd{v_{n}(\theta_0)}{v_{n}\paren{\theta+\gamma\lambda}} -\InnerProd{v_{n}\paren{\theta+\gamma\lambda}}{v_{n}\paren{\theta}} \InnerProd{v_{n}\paren{\theta}}{v_{n}(\theta_0)} }{\gamma}}.
\end{eqnarray*}
Hence, it suffices to show that 
\begin{eqnarray}\label{T}
A&\coloneqq&\inf_{\theta\in \Theta^c_{\theta_0}\paren{r_n}}\lim\limits_{\gamma\rightarrow 0} \abs{\frac{\InnerProd{v_{n}(\theta_0)}{v_{n}\paren{\theta+\gamma\lambda}} -\InnerProd{v_{n}\paren{\theta+\gamma\lambda}}{v_{n}\paren{\theta}} \InnerProd{v_{n}\paren{\theta}}{v_{n}(\theta_0)} }{\gamma}}\nonumber\\
&>& r'_n\coloneqq C_0\sqrt{\frac{\ln n}{n}}.
\end{eqnarray}
Define two unit norm vectors $u_1,u_2\in\bb{R}^m$ and positive scalars $\tau_1,\tau_2$ by
\begin{align*}
& v_{n}(\theta_0) = \InnerProd{v_{n}(\theta_0)}{v_{n}(\theta)}v_{n}(\theta) + \tau_1 u_1,\\
& v_{n}(\theta+\gamma\lambda) = \InnerProd{v_{n}(\theta+\gamma\lambda)}{v_{n}(\theta)}v_{n}(\theta) + \tau_2 u_2.
\end{align*}
Using the fact that both vectors $u_1$ and $u_2$ are perpendicular to $v_n\paren{\theta}$, we get
\begin{equation}\label{ClsdFrmExp}
\frac{\InnerProd{v_{n}(\theta_0)}{v_{n}\paren{\theta+\gamma\lambda}} -\InnerProd{v_{n}\paren{\theta+\gamma\lambda}}{v_{n}\paren{\theta}} \InnerProd{v_{n}\paren{\theta}}{v_{n}(\theta_0)}}{\gamma} = \frac{\tau_1\tau_2}{\gamma}\InnerProd{u_1}{u_2}.
\end{equation}
Achieving tight lower bounds on $\tau_1$ and $\tau_2$ is a crucial step of the proof. Without loss of generality we can assume that $\InnerProd{v_{n}(\theta_0)}{v_{n}(\theta)}$ is non-negative. Thus, for any $\theta\in \Theta^c_{\theta_0}\paren{r_n}$.
\begin{eqnarray}\label{Tau1LowBnd}
\tau_1 &=& \Bigl|v_{n}(\theta_0)-\InnerProd{v_{n}(\theta_0)}{v_{n}(\theta)}v_{n}(\theta)\Bigr| = \sqrt{1-\InnerProd{v_{n}(\theta_0)}{v_{n}(\theta)}^2}\geq \frac{1}{\sqrt{2}}\sqrt{2-2\InnerProd{v_{n}(\theta_0)}{v_{n}(\theta)}}\nonumber\\
&=&\frac{1}{\sqrt{2}}\LpNorm{v_{n}(\theta_0)-v_{n}(\theta)}{2} = \frac{1}{\sqrt{2}\LpNorm{K_n\paren{\theta}}{2} }\LpNorm{K_n\paren{\theta}-\frac{\LpNorm{K_n\paren{\theta}}{2}}{\LpNorm{K_n\paren{\theta_0}}{2}} K_n\paren{\theta_0}  }{2}\nonumber\\
&\RelNum{\paren{D}}{\geq}&\frac{\LpNorm{K_n\paren{\theta}-K_n\paren{\theta_0}}{2} }{ \sqrt{2}\LpNorm{K_n\paren{\theta}}{2}\Lambda_{max} }\RelNum{\paren{E}}{\geq} \frac{\ff{D}_{\min}}{\sqrt{2}\Lambda^2_{\max} }\LpNorm{\theta-\theta_0}{2}\geq \frac{\ff{D}_{\min}}{\sqrt{2}\Lambda^2_{\max} }r_n.
\end{eqnarray}
In above inequality, $\paren{D}$ and $\paren{E}$ has been concluded from Lemma \ref{ModalLem2} and Proposition \ref{UnifBndEigValCorrMatrix}. Applying similar tricks as \eqref{Tau1LowBnd} leads to
\begin{equation}\label{Tau2LowBnd}
\tau_2\geq \frac{\ff{D}_{\min}}{\sqrt{2}\Lambda^2_{\max} } \LpNorm{ \theta+\gamma\lambda-\theta }{2} = \gamma \frac{\ff{D}_{\min}}{\sqrt{2}\Lambda^2_{\max} }.
\end{equation}
Replacing the two inequalities \eqref{Tau1LowBnd} and \eqref{Tau2LowBnd} into \eqref{ClsdFrmExp} shows that
\begin{equation*}
\abs{\frac{\InnerProd{v_{n}(\theta_0)}{v_{n}\paren{\theta+\gamma\lambda}} -\InnerProd{v_{n}\paren{\theta+\gamma\lambda}}{v_{n}\paren{\theta}} \InnerProd{v_{n}\paren{\theta}}{v_{n}(\theta_0)}}{\gamma}}\geq  C\paren{\frac{\ff{D}_{\min}}{\sqrt{2}\Lambda^2_{\max} }}^2 \abs{\InnerProd{u_1}{u_2}} \sqrt{\frac{\ln n}{n}}.
\end{equation*}
		
Using Assumption \ref{Covar&ParamSpAssu2}, one can show that $\InnerProd{u_1}{u_2}$ is nonzero, i.e., $\abs{\InnerProd{u_1}{u_2}}\geq \alpha_0$ for some $\alpha_0>0$, universally over $\theta\in\Theta$ and $\theta_0\in\Theta^c_{\theta}\paren{r_n}$. Finally, choosing $C> C_0/\alpha_0\paren{\sqrt{2}\ff{D}^{-1}_{\min}\Lambda^2_{\max} }^2$ implies that
\begin{equation*}
A \geq C\alpha_0\paren{\frac{\ff{D}_{\min}}{\sqrt{2}\Lambda^2_{\max} }}^2\sqrt{\frac{\ln n}{n}}> r'_n = C_0\sqrt{\frac{\ln n}{n}},
\end{equation*}
which ends the proof.
\end{proof}
	
\end{proof}

\begin{proof}[Proof of Theorem \ref{MinMaxThm}]
We follow the standard techniques presented in the Chapter $2$ of \cite{AB.Tsybakov} for bounding the minimax risk from below. For any $\theta\in\Theta$, $\bb{P}_{\theta}$ stands for the associated distribution to a zero mean Gaussian vector with the covariance function $K_n\paren{\theta}$. Finding far enough (with respect to the Euclidean distance) pair of the correlation parameters, $\theta_i\in\Theta,\; i=1,2$, for which $D\paren{\bb{P}_{\theta_1} \parallel \bb{P}_{\theta_2}} < \alpha = 1/2$ lies at the heart of our proof. The two bounded positive scalars $\ff{D}_{\max}$ and $\Lambda_{\min}$ appearing here are defined in Proposition \ref{UnifBndEigValCorrMatrix}. To ease notation let $r_n \coloneqq \frac{\Lambda_{\min}}{8\ff{D}_{\max}\sqrt{n}}$ (Choose $n$ large enough so that $4r_n\leq \diam\paren{\Theta}$). Choose $\theta_1,\theta_2\in\Theta$ with $2r_n\leq \LpNorm{\theta_2-\theta_1}{2}\leq 4r_n$. The connectedness of $\Theta$ guarantees the existence of such pair of points. We first use the Proposition \ref{KLDivgncLem} to show that $D\paren{\bb{P}_{\theta_1} \parallel \bb{P}_{\theta_2}}\leq \alpha$.
\begin{equation*}
D\paren{\bb{P}_{\theta_1} \parallel \bb{P}_{\theta_2}}\leq 2n\paren{\frac{\ff{D}_{\max}}{\Lambda_{\min}} \LpNorm{\theta_2-\theta_1}{2}}^2\leq 32n\paren{\frac{\ff{D}_{\max}}{\Lambda_{\min}}r_n}^2 = \alpha = \frac{1}{2}.
\end{equation*}
As $\alpha\geq D\paren{\bb{P}_{\theta_1} \parallel \bb{P}_{\theta_2}}$, Theorem $2.2$ of \cite{AB.Tsybakov} yields
\begin{equation}\label{FanoIneq}
\inf_{\hat{\theta}_n}\sup_{\theta_0\in\Theta} \Pr\paren{\LpNorm{\hat{\theta}_n-\theta_0}{2}\geq r_n}\geq \paren{\frac{1}{4}e^{-\alpha} }\vee \paren{\frac{1-\sqrt{\frac{\alpha}{2}}}{2}} = \frac{1}{4}.
\end{equation}
The desired statement follows from the fact that $\LpNorm{\hat{\eta}_n-\eta_0}{2}\geq \LpNorm{\hat{\theta}_n-\theta_0}{2}$.
\end{proof}

\begin{proof}[Proof of Proposition \ref{IdentifProp}]
Recall our notation from Assumptions \ref{Covar&ParamSpAssu} and \ref{Covar&ParamSpAssu2}. The main objective is to verify the inequality \eqref{IdentifCond} (Assumption $\paren{A2}$). Let $\phi$ be any point in $\cc{I}$ and let $\phi_{\max} \coloneqq \sup\set{\phi':\phi'\in\cc{I}}$. We also choose $r_1=r_2=r$. Select small enough $\delta > 0$, which its value will be chosen later and an arbitrary $s\in\cc{D}_n$. First, choose two distinct $\theta_1,\theta_2\in\Theta$ whose mutual Euclidean distance is at least $\delta$ and define $\eta_i\coloneqq\paren{\theta_i,\phi},\;i=1,2$. Clearly, 
\begin{eqnarray*}
\abs{R\paren{s'-s,\eta_1}-R\paren{s'-s,\eta_2}} &=& \phi \abs{K\paren{s'-s,\theta_1}-K\paren{s'-s,\theta_2}}\\
&\leq& \phi_{\max}\abs{K\paren{s'-s,\theta_1}-K\paren{s'-s,\theta_2}},
\end{eqnarray*}
for any $s'\in \cc{D}_n\paren{s,r}$. Thus, according $\paren{A4.b}$, 
\begin{equation}\label{QKRel}
\max_{h\in\cc{D}_n\paren{s,r}-s} \abs{K\paren{h,\theta_1}-K\paren{h,\theta_2}} \geq C_{\Omega,K}\coloneqq  \phi^{-1}_{\max}\max_{h\in\cc{D}_n\paren{s,r}-s}\abs{R\paren{h,\eta_1}-R\paren{h,\eta_2}} > 0.
\end{equation}
In other words, $\paren{A2}$ holds by choosing $M = \delta^{-1}C_{\Omega,K}$. So, we need to verify condition \eqref{IdentifCond} when the distance $\theta_1$ and $\theta_2$ is smaller than $\delta$. In this case, there is small $\tau < \delta$ and $\lambda_0\in\cc{S}^{m-1}$ for which $\theta_2-\theta_1 = \tau\lambda_0$. Moreover, define the unit norm vector $\lambda\in\bb{R}^{m+1}$ by $\lambda = \paren{\lambda_0, 0}$. Because of the continuity of $K$, we can choose $\delta$ small enough for which there exists $M'\in\paren{0,1}$ such that 
\begin{eqnarray*}
\max_{h\in\cc{D}_n\paren{s,r}-s} \frac{\abs{K\paren{h,\theta_1}-K\paren{h,\theta_2}}}{\LpNorm{\theta_2-\theta_1}{2} }&\geq& M'\max_{h\in\cc{D}_n\paren{s,r}-s} \lim\limits_{r\rightarrow 0 }\abs{\frac{K\paren{h,\theta_1+r\lambda}-K\paren{h,\theta_1}}{r}}\\
&=& \vartheta \coloneqq \frac{M'}{\phi_{\max}} \max_{h\in\cc{D}_n\paren{s,r}-s} \abs{\sum\limits_{j=1}^{m+1} \lambda_j \frac{\partial}{\partial \eta_j} R\paren{h,\eta}\Big\rvert_{\eta=\eta_1}}.
\end{eqnarray*}
Condition $\paren{A4.a}$ assures the existence of a positive universal constant $M_2$, for which $\vartheta \geq C'_{\Omega,K}\coloneqq M_2M'/\phi_{\max} > 0$. Hence, $\paren{A2}$ holds by taking $M = C'_{\Omega,K} \wedge \delta^{-1}C_{\Omega,K}$.
\end{proof}

\begin{proof}[Proof of Proposition \ref{A5GeoAnisot}]
For a positive definite matrix $A\in\bb{R}^{d\times d}$ and $h\in\bb{R}^d$, the norm $\norm{h}{A}$ represents the quantity $\sqrt{h^\top Ah}$. Furthermore, define $\cc{B}_r\paren{\cc{D}_n} \coloneqq \set{h=s'-s: s,s'\in\cc{D}_n, \LpNorm{h}{2}\leq r }$, for any $r > 0$. Any geometrical anisotropic covariance function associated to the correlation function $K$ are of the form
\begin{equation*}
R\paren{s'-s,\eta} = \phi K\brac{ \norm{s'-s}{A} },\quad\forall\; s,s'\in\bb{R}^d.
\end{equation*}
Notice that $\eta = \paren{\phi,A}$ is a vector of size $m+1 = d^2+1$. Consider any  $\lambda\in\cc{S}^m$ and $\eta\in\Omega$. The chain rule for derivative leads to the following identity for any $\eta\in\Omega$ and $h\in\bb{R}^d$
\begin{equation}\label{Iofh}
J\paren{h} \coloneqq \sum\limits_{j=1}^{m+1} \lambda_j \frac{\partial}{\partial \eta_j} R\paren{h,\eta} = \lambda_1K\paren{ \norm{h}{A}} + \phi\sum_{i,j=1}^{d} \lambda_{ij}h_ih_j \frac{K'\paren{\norm{h}{A}}}{2\norm{h}{A}},\quad h\ne \zero_d.
\end{equation}
Slightly abusing the notation, $\lambda_{ij}$ denotes the entry of $\lambda$ corresponding to $A_{ij}$. Furthermore, let $\tilde{J}\paren{h}$ denotes the second term in the right hand side of \eqref{Iofh} and define $\Lambda \coloneqq \brac{\lambda_{ij}}^d_{i,j=1}$. One can easily show that $\lim\limits_{h\rightarrow\zero_d}\tilde{J}\paren{h} = 0$. Thus, identity \eqref{Iofh} can be rewritten as
\begin{align}\label{Iofh2}
&J\paren{\zero_d} = \lambda_1K\paren{0} = \lambda_1,\nonumber\\	& J\paren{h} = \lambda_1K\paren{ \norm{h}{A}} + \phi K'\paren{\norm{h}{A}} \frac{h^\top\Lambda h}{2\norm{h}{A}},\quad h\ne \zero_d.
\end{align}
It suffices to prove that $J$ is not a zero function of $h$ over $\cc{B}_r\paren{\cc{D}_n}$ for some appropriately chosen $r$. Assume toward contradiction that $J$ is a zero function over $\cc{B}_r\paren{\cc{D}_n}$. As $\zero_d\in \cc{B}_r\paren{\cc{D}_n}$, $\lambda_1$ equals zero from the first identity in \eqref{Iofh2}. Therefore, $\tilde{J}\paren{h} = 0$ for any $h\in\cc{B}_r\paren{\cc{D}_n}$. Since $K$ is a differentiable and strictly decreasing function in $\paren{0,\infty}$, we have $K'\paren{u}\ne 0$ for any strictly positive $u$. Thus,
\begin{equation}\label{Iofh3}
\tilde{J}\paren{h} = 0,\quad\forall\; h\in\cc{B}_r\paren{\cc{D}_n} \;\Rightarrow\; \InnerProd{\Lambda}{hh^\top} = 0,\quad h\in\cc{B}_r\paren{\cc{D}_n}.
\end{equation}
Now choose $r$ in such a way that $\abs{\cc{B}_r\paren{\cc{D}_n}} > \paren{d+1}$. Due to the non-atomicity of $\ff{P}$, there are almost surely $d$ non-zero and linearly independent vectors, $\set{h_1,\ldots,h_d}$, in $\cc{B}_r\paren{\cc{D}_n}$. According to \eqref{Iofh3}, $\InnerProd{\Lambda}{h_ih^\top_i} = 0$ for $i=1,\ldots,d$. Hence $\Lambda$ is a zero matrix, which leads to a clear contradiction to the fact that $\lambda\in\cc{S}^m$ (recall that $\LpNorm{\lambda}{2}^2 = \lambda^2_1 + \LpNorm{\Lambda}{2}^2$).
\end{proof}

\begin{proof}[Proof of Proposition \ref{A5GeoAnisotUnknwnNu}]
As the proof is akin to that of Proposition \ref{A5GeoAnisot}, the details are dropped for avoiding redundancy. We also keep the notation used there, for simplicity. Notice that $\eta = \paren{\phi,\theta}$ and here $\theta = \paren{\nu, A}$ is of size $m = d^2+1$. Choose any unit norm $\lambda\in\cc{S}^m$. We first consider the rational quadratic case. It is easy to verify the following identity for any $h\in\cc{B}_r\paren{\cc{D}_n}$.
\begin{equation*}
J\paren{h} \coloneqq \sum\limits_{j=1}^{m+1} \lambda_j \frac{\partial}{\partial \eta_j} R\paren{h,\eta} = \phi K_{\nu}\paren{ \norm{h}{A}} \set{\frac{\lambda_1}{\phi} - \lambda_2 \ln\paren{1+\norm{h}{A}^2} -\frac{d/2+\nu}{1+\norm{h}{A}^2}\sum_{i,j=1}^{d} \lambda_{ij}h_ih_j}.
\end{equation*}
It suffices to show that $J$ is a nonzero function on $\cc{B}_r\paren{\cc{D}_n}$ unless all $\lambda$ coefficients are zero. Assume that the premise is not true and $J$ vanishes in $\cc{B}_r\paren{\cc{D}_n}$. Thus, $J\paren{\zero_d} = \lambda_1 = 0$ and so $\tilde{J}$ defined by
\begin{equation*}
\tilde{J}\paren{h} \coloneqq \lambda_2 \ln\paren{1+\norm{h}{A}^2} +\paren{\frac{d/2+\nu}{1+\norm{h}{A}^2}}h^\top\Lambda h,
\end{equation*}
should be a zero function in $\cc{B}_r\paren{\cc{D}_n}$. If $\lambda_2 = 0$, then the rest of the proof is similar to the argument of Proposition \ref{A5GeoAnisot}. So assume that $\lambda_2$ is nonzero. Hence for $C = \phi/\lambda_2\paren{d/2+\nu}$ we have
\begin{equation*}
\paren{1+\norm{h}{A}^2}\ln\paren{1+\norm{h}{A}^2} = C\norm{h}{\Lambda}^2,\quad \forall\;h\in\cc{B}_r\paren{\cc{D}_n},
\end{equation*}
which is almost surely absurd for large enough $r$ and so terminates the proof. Apart from the algebraic form of $J$, our proof for the powered exponential family goes through without any change. In the sequel, we just express $J$ and skip the further details for page constraints.
\begin{equation*}
\frac{J\paren{h}}{\phi K_{\nu}\paren{ \norm{h}{A}}} = \frac{\lambda_1}{\phi} - \lambda_2 \norm{h}{A}^\nu \ln\paren{\norm{h}{A}} - \frac{\nu}{2} \norm{h}{A}^{-\paren{2-\nu}} \norm{h}{\Lambda}^2,\quad J\paren{\zero_d} = \lambda_1.
\end{equation*}
\end{proof}

\begin{proof}[Proof of Theorem \ref{AsympDistThm}]
Let $g:\Omega\mapsto\bb{R}^{m+1}$ represents the gradient of the objective function in \eqref{InvFreeOptProbEtaForm} with respect to $\eta$. Here $g_j, \; j=11,\ldots,\paren{m+1}$ stands for the $j^{\rm {th}}$ entry of $g$. Analyzing the exact second order Taylor expansion of $\sqrt{n}g\paren{\eta}$ around $\eta_0$ at $\eta = \hat{\eta}_n$ is the integral part of the proof. We argue that the second order term of the expansion, which involves the third order derivatives of the covariance function, converges to zero in probability as $n$ grows to infinity. We also show that the first term (zeroth order term) in the expansion converges weakly to a Gaussian random variable. These two ingredients lead to the desirable result by showing the asymptotic normality of the first order term in the expansion, which directly depends on $\sqrt{n}\paren{\hat{\eta}_n-\eta_0}$.
	
For simplicity, define $R^J_n\paren{\eta} = \frac{\partial R_n\paren{\eta}}{\partial\eta_{j_q}\partial\eta_{j_1}}$ for any $q\in\set{1,2}$ and $J\in\set{1,\ldots,m+1}^q$. In fact 
\begin{equation}\label{g_j}
ng_j\paren{\eta} = Y^\top R^j_n\paren{\eta} Y - \InnerProd{R^j_n\paren{\eta}}{R_n\paren{\eta}}.
\end{equation}
Let $\hat{\eta}_n$ be an arbitrary stationary point of optimization problem \eqref{InvFreeOptProbEtaForm}. Clearly, $g\paren{\hat{\eta}_n} = \zero_{m+1}$. The second order approximation of $g_j$ around $\hat{\eta}_n$ yields
\begin{equation*}
\sqrt{n}g_j\paren{\hat{\eta}_n} = \sqrt{n}g_j\paren{\eta_0} + \InnerProd{\sqrt{n}\paren{\hat{\eta}_n-\eta_0}}{\nabla_{\eta}g_j\paren{\eta}\Big\lvert_{\eta=\eta_0}} + \sqrt{n}\Delta_j\paren{\eta_0,\hat{\eta}_n},
\end{equation*}
for some residual function $\Delta_j\paren{\cdot,\cdot}$. Note that $\Delta_j\paren{\eta_0,\hat{\eta}_n}$ is given by
\begin{equation*}
\Delta_j\paren{\eta_0,\hat{\eta}_n} = \paren{\hat{\eta}_n-\eta_0}^\top \brac{ \frac{\partial g_j\paren{\eta}}{\partial\eta_{l_1}\partial\eta_{l_2}}\Big\lvert_{\eta=z_j} }^{m+1}_{l_1,l_2=1} \paren{\hat{\eta}_n-\eta_0}
\end{equation*}
in which $z_j$ lies on the line segment between $\eta_0$ and $\hat{\eta}_n$. Proposition $D.10$ of \cite{F.Bachoc} guarantees the statement \eqref{AsympNormEq} for $\Sigma = \Sigma^{-1}_2\Sigma_1\Sigma^{-1}_2$ and hence concludes the proof, if
\begin{enumerate}[label = (\alph*)]
\item The matrix $\Sigma_2$ defined as the following, is well defined and positive definite.
\begin{equation*}
V^n\coloneqq\brac{-\frac{\partial}{\partial\eta_l}g_k\paren{\eta}\Big\lvert_{\eta = \eta_0}}^{m+1}_{l,k=1}\cp{\Pr}\Sigma_2,\;\; \mbox{as}\; n\rightarrow\infty.
\end{equation*} 
\item $\sqrt{n}g\paren{\eta_0}\cp{d}\cc{N}\paren{\zero_{m+1},\Sigma_1}$ for some positive semidefinite matrix $\Sigma_1\in\bb{R}^{\paren{m+1}\times \paren{m+1}}$.
\item $\Pr\paren{\lim\limits_{n\rightarrow\infty}\sqrt{n}\Delta_j\paren{\eta_0,\hat{\eta}_n} = 0} = 1$, for any $j\in\set{1,\ldots,m+1}$.
\end{enumerate}
The reminder of the proof hinges on the following technicalities which verify conditions $\paren{a}-\paren{c}$.\\
	
\textbf{Validating condition $\paren{a}$.} The entries of $V^n$ has the following explicit form.
\begin{equation*}
V^n_{lk} = \frac{1}{n}\InnerProd{R^l_n\paren{\eta_0}}{R^k_n\paren{\eta_0}} + \frac{1}{n}\set{Y^\top R^{lk}_n\paren{\eta_0} Y - \InnerProd{R^{lk}_n\paren{\eta_0}}{R_n\paren{\eta_0}}}.
\end{equation*}
Now define 
\begin{equation*}
\Sigma_2 \coloneqq \brac{\lim\limits_{n\rightarrow\infty}\frac{\InnerProd{R^l_n\paren{\eta_0}}{R^k_n\paren{\eta_0}}}{n}}^{m+1}_{l,k=1}.
\end{equation*}
Notice that the entries of $\Sigma_2$ are well defined and bounded due to part $\paren{a}$ of Proposition \ref{UnifBndEigValCorrMatrix2}. The proof will be presented in two steps: First we show that $\Sigma_2$ is a positive definite matrix. Second, we prove that $\Phi^n_{lk}\coloneqq\set{Y^\top R^{lk}_n\paren{\eta_0} Y - \InnerProd{R^{lk}_n\paren{\eta_0}}{R_n\paren{\eta_0}}}/n$ converges to zero in probability. To substantiate the first claim, consider an arbitrary $\lambda\in\cc{S}^m$. It is required to show that $\lambda^\top\Sigma_2\lambda > c$, for some constant $c>0$. The condition $\paren{A4.a}$ guarantees the existence of positive scalars $M_2,r_2$ such that for any $s\in\cc{D}_n$,
\begin{equation}\label{Cond4.A}
\max_{s'\in\cc{D}_n\paren{s,r_2}}\abs{\sum\limits_{l=1}^{m+1}\lambda_l\frac{\partial}{\partial\eta_l}R\paren{s-s',\eta}\Big\lvert_{\eta=\eta_0}} \geq M_2.
\end{equation}
Thus,
\begin{eqnarray}\label{Sigma2PD}
\lambda^\top\Sigma_2\lambda &=& \lim\limits_{n\rightarrow\infty} \sum\limits
_{l,k=1}^{m+1}\lambda_l\lambda_k\frac{\InnerProd{R^l_n\paren{\eta_0}}{R^k_n\paren{\eta_0}}}{n} = \lim\limits_{n\rightarrow\infty}\frac{1}{n} \LpNorm{\sum\limits_{l=1}^{m+1}\lambda_lR^l_n\paren{\eta_0} }{2}^2\nonumber\\
&=& \lim\limits_{n\rightarrow\infty}\frac{1}{n}\LpNorm{ \brac{\sum\limits_{l=1}^{m+1}\lambda_l\frac{\partial}{\partial\eta_l}R\paren{s'-s,\eta}\Big\lvert_{\eta=\eta_0} }_{s,s'\in\cc{D}_n} }{2}^2 \RelNum{\paren{A}}{\geq} M^2_2.
\end{eqnarray}
Here, inequality $\paren{A}$ is an easy consequence of \eqref{Cond4.A}. The rest of the proof is devoted to prove the second claim. Choose an arbitrary strictly positive $\epsilon$. As $\Phi^n_{lk}$ is a zero mean random variable, using Chebyshev's inequality we get
\begin{eqnarray*}
\Pr\paren{\abs{\Phi^n_{lk}}\geq \epsilon }&\leq& \frac{\var \paren{\Phi^n_{lk}}}{\epsilon^2} = \frac{2\LpNorm{R^{1/2}_n\paren{\eta_0}R^{lk}_n\paren{\eta_0}R^{1/2}_n\paren{\eta_0}}{2}^2 }{n^2\epsilon^2}\RelNum{\paren{B}}{\lesssim} \paren{\frac{\phi_0}{n\epsilon}\LpNorm{R^{lk}_n\paren{\eta_0}}{2}}^2\\ &\RelNum{\paren{C}}{=}& \cc{O}\paren{n^{-1}}\rightarrow 0,
\end{eqnarray*}
in which $\paren{B}$ and $\paren{C}$ are implied by Propositions \ref{UnifBndEigValCorrMatrix} and \ref{UnifBndEigValCorrMatrix2}, respectively (See Appendix \ref{AuxRes} for more details about the constants).
\\
\textbf{Validating condition $\paren{b}$.} Define $Q^j_n \coloneqq n^{-1/2}R^{1/2}_n\paren{\eta_0}R^j_n\paren{\eta_0}R^{1/2}_n\paren{\eta_0}$ for $1\leq j\leq m+1$, and write $\Psi_{n,\lambda}\coloneqq\lambda_1 Q^1_n + \ldots + \lambda_{m+1} Q^{m+1}_n$ for any $\lambda = \paren{\lambda_1,\ldots,\lambda_{m+1}}\in\cc{S}^m$. Rewriting \eqref{g_j} yields
\begin{equation*}
\sqrt{n}g_j\paren{\eta_0} \eqd Z^\top Q^j_nZ - \tr\paren{Q^j_n}.
\end{equation*} 
The asymptotic normality of $\sqrt{n}g\paren{\eta_0}$ is justified if there is a positive semi-definite $\Sigma_1$ such that $\InnerProd{\lambda}{\sqrt{n}g\paren{\eta_0}}\cp{d} \cc{N}\paren{0,\lambda^\top\Sigma_1\lambda}$ for any $\lambda\in\cc{S}^m$. This statement trivially holds for zero $\Psi_{n,\lambda}$. So, without loss of generality assume that $\Psi_{n,\lambda}$ is non-zero. Observe that
\begin{equation*}
\InnerProd{\lambda}{\sqrt{n}g\paren{\eta_0}} = \frac{\set{Z^\top \Psi_{n,\lambda}Z - \tr\paren{\Psi_{n,\lambda} }}}{\LpNorm{\Psi_{n,\lambda}}{2} }\LpNorm{\Psi_{n,\lambda}}{2}.
\end{equation*}
We claim that $\lim\limits_{n\rightarrow\infty}2\LpNorm{\Psi_{n,\lambda}}{2}^2 = \lambda^\top\Sigma_1\lambda$ for a covariance matrix $\Sigma_1$. The construction of $\Psi_{n,\lambda}$ yields
\begin{equation}\label{Psinlambda}
2\LpNorm{\Psi_{n,\lambda}}{2}^2 = 2\lambda^\top \brac{\InnerProd{Q^k_n}{Q^l_n}}^{m+1}_{l,k=1}\lambda.
\end{equation}
Thus, it is enough to show that the matrix defined by $\Sigma_1 \coloneqq \lim\limits_{n\rightarrow\infty} 2\brac{\InnerProd{Q^k_n}{Q^l_n}}^{m+1}_{l,k=1}$ is well defined (with bounded entries) and positive semi-definite. Well definiteness of $\Sigma_1$ can be proved using the same techniques as the proof of Claim \ref{Claim1Thm4} and by employing Propositions \ref{UnifBndEigValCorrMatrix} and \ref{UnifBndEigValCorrMatrix2}. The positive semi-definite property of $\Sigma_1$ is an immediate consequence of \eqref{Psinlambda}. We conclude the proof by showing that  
\begin{equation*}
\frac{\set{Z^\top \Psi_{n,\lambda}Z - \tr\paren{\Psi_{n,\lambda} }}}{\sqrt{2}\LpNorm{\Psi_{n,\lambda}}{2} }\cp{d} \cc{N}\paren{0,1}.
\end{equation*}
According to Lemma \ref{AsymNormQForm}, this statement is valid if the following claim holds.
	
\setcounter{clawithinpf}{0}
	
\begin{clawithinpf}\label{Claim1Thm4}
$\LpNorm{\Psi_{n,\lambda}}{2}^{-1}\OpNorm{\Psi_{n,\lambda}}{2}{2} \leq C/\sqrt{n}$ for some positive scalar $C$. 
\end{clawithinpf}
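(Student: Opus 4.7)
The plan is to exhibit an operator-norm upper bound of order $n^{-1/2}$ in the numerator and a Frobenius-norm lower bound of constant order in the denominator, then divide. The key structural observation is that $\Psi_{n,\lambda}$ is the ``sandwich'' $n^{-1/2} R_n^{1/2}(\eta_0) M_\lambda R_n^{1/2}(\eta_0)$ where $M_\lambda \coloneqq \sum_{j=1}^{m+1}\lambda_j R_n^j(\eta_0)$, so both bounds reduce to statements about $M_\lambda$ up to multiplicative constants controlled by the spectrum of $R_n(\eta_0)$.

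For the operator-norm upper bound, I would use the triangle inequality and submultiplicativity to write
\begin{equation*}
\OpNorm{\Psi_{n,\lambda}}{2}{2} \;\leq\; n^{-1/2} \sum_{j=1}^{m+1}|\lambda_j|\, \OpNorm{R_n(\eta_0)}{2}{2}\OpNorm{R_n^j(\eta_0)}{2}{2},
\end{equation*}
and then apply Propositions \ref{UnifBndEigValCorrMatrix} and \ref{UnifBndEigValCorrMatrix2} (which respectively provide uniform upper bounds on $\OpNorm{R_n(\eta_0)}{2}{2}$ and $\OpNorm{R_n^j(\eta_0)}{2}{2}$ independent of $n$) together with Cauchy--Schwarz over $\lambda\in\cc{S}^m$ to conclude $\OpNorm{\Psi_{n,\lambda}}{2}{2}\leq C_1/\sqrt{n}$.

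For the Frobenius-norm lower bound, the essential tool is the general inequality $\LpNorm{AB}{2}\geq \sigma_{\min}(A)\LpNorm{B}{2}$, applied twice (once on the left factor $R_n^{1/2}(\eta_0)$ and once on the right), to obtain
\begin{equation*}
\LpNorm{\Psi_{n,\lambda}}{2}\;\geq\; n^{-1/2}\,\lambda_{\min}\!\bigl(R_n(\eta_0)\bigr)\,\LpNorm{M_\lambda}{2}.
\end{equation*}
Proposition \ref{UnifBndEigValCorrMatrix} supplies a uniform positive lower bound $\Lambda_{\min}$ for $\lambda_{\min}(R_n(\eta_0))$, and the calculation already performed in \eqref{Sigma2PD} (which relies on identifiability condition $(A4.a)$) yields $\LpNorm{M_\lambda}{2}^2 \geq M_2^2\, n$, since for each $s\in\cc{D}_n$ there exists $s'\in\cc{D}_n(s,r_2)$ with $|\sum_j\lambda_j\,\partial_{\eta_j}R(s-s',\eta)|_{\eta=\eta_0}|\geq M_2$. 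Combining, $\LpNorm{\Psi_{n,\lambda}}{2}\geq \Lambda_{\min} M_2$, a positive constant independent of $n$ and $\lambda$.

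Dividing the two estimates gives $\OpNorm{\Psi_{n,\lambda}}{2}{2}/\LpNorm{\Psi_{n,\lambda}}{2}\leq C/\sqrt{n}$ with $C = C_1/(\Lambda_{\min} M_2)$, uniformly over $\lambda\in\cc{S}^m$. The only nontrivial point is the transition from norm control of the raw sum $M_\lambda$ to norm control of the conjugated matrix $\Psi_{n,\lambda}$: for the operator norm this is routine, but for the Frobenius norm it hinges on the fact that $R_n(\eta_0)$ has eigenvalues bounded away from zero uniformly in $n$, which is precisely what the increasing-domain setup (perturbed-lattice sampling plus polynomial decay of $R$) buys us through Proposition \ref{UnifBndEigValCorrMatrix}.
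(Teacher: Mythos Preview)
Your proposal is correct and follows essentially the same route as the paper: both reduce the ratio to one involving $M_\lambda=\sum_j\lambda_j R_n^j(\eta_0)$ via the sandwich structure, bound $\OpNorm{M_\lambda}{2}{2}$ by the triangle inequality and Propositions \ref{UnifBndEigValCorrMatrix}--\ref{UnifBndEigValCorrMatrix2}, and obtain $\LpNorm{M_\lambda}{2}\geq C_0\sqrt{n}$ from the identifiability calculation \eqref{Sigma2PD}. The only cosmetic difference is that the paper first writes the ratio inequality $\OpNorm{\Psi_{n,\lambda}}{2}{2}/\LpNorm{\Psi_{n,\lambda}}{2}\leq(\Lambda_{\max}/\Lambda_{\min})\,\OpNorm{M_\lambda}{2}{2}/\LpNorm{M_\lambda}{2}$ and then bounds numerator and denominator of the right-hand side, whereas you bound $\OpNorm{\Psi_{n,\lambda}}{2}{2}$ and $\LpNorm{\Psi_{n,\lambda}}{2}$ separately before dividing.
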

	
\begin{proof}[Proof of Claim \ref{Claim1Thm4}]
We show that $C$ depends on $m$, $\Lambda_{\max}$, $\Lambda_{\min}$ and $\Lambda'_{\max}$ (Except $m$, all the constant are introduced in Propositions \ref{UnifBndEigValCorrMatrix} and \ref{UnifBndEigValCorrMatrix2}). Obviously $\Psi_{n,\lambda}$ can be rewritten as, 
\begin{equation*}
\Psi_{n,\lambda} = \frac{1}{\sqrt{n}}R^{1/2}_n\paren{\eta_0}\set{\sum\limits_{j=1}^{m+1}\lambda_jR^j_n\paren{\eta_0}}R^{1/2}_n\paren{\eta_0}
\end{equation*}
Applying Propositions \ref{UnifBndEigValCorrMatrix}, we get
\begin{equation}\label{Ratio}
\frac{\OpNorm{\Psi_{n,\lambda}}{2}{2}}{\LpNorm{\Psi_{n,\lambda}}{2}}\leq \frac{\OpNorm{R_n\paren{\eta_0}}{2}{2}\OpNorm{\sum_{j=1}^{m+1}\lambda_jR^j_n\paren{\eta_0}}{2}{2} }{\LpNorm{\sum_{j=1}^{m+1}\lambda_jR^j_n\paren{\eta_0}}{2}\lambda_{\min}\set{R_n\paren{\eta_0}} }\leq \frac{\Lambda_{\max}}{\Lambda_{\min}}\OpNorm{\sum_{j=1}^{m+1}\lambda_jR^j_n\paren{\eta_0}}{2}{2} \LpNorm{\sum_{j=1}^{m+1}\lambda_jR^j_n\paren{\eta_0}}{2}^{-1}
\end{equation}
Furthermore, using Proposition \ref{UnifBndEigValCorrMatrix2} leads to
\begin{equation*}
\OpNorm{\sum_{j=1}^{m+1}\lambda_jR^j_n\paren{\eta_0}}{2}{2} \leq \sum_{j=1}^{m+1} \abs{\lambda_j}\OpNorm{R^j_n\paren{\eta_0}}{2}{2}\leq \Lambda'_{\max}\LpNorm{\lambda}{1}\leq \Lambda'_{\max}\sqrt{m+1}.
\end{equation*}
From \eqref{Sigma2PD} we know that there is a scalar $C_0 \in\paren{0,\infty}$ for which $\LpNorm{\sum_{j=1}^{m+1}\lambda_jR^j_n\paren{\eta_0}}{2} \geq C_0\sqrt{n}$. Replacing the last two inequalities into \eqref{Ratio} ends the proof.
\end{proof}
	
In conclusion we state that the condition $\paren{c}$ can be proved using akin techniques as the proof of Proposition $3.2$ of \cite{F.Bachoc}. We omit the technical details due to the space constraints. 
\end{proof}

%% The Appendices part is started with the command \appendix;
%% appendix sections are then done as normal sections
%% \appendix

%% \section{}
%% \label{}

\appendix
\makeatletter
\def\@seccntformat#1{\csname Pref@#1\endcsname \csname the#1\endcsname\quad}
\def\Pref@section{~}
\makeatother

\renewcommand*{\appendixname}{}

\section{Appendix: Auxiliary results}\label{AuxRes}

We first state an elementary lemma regarding the basic properties of the operator norm. The proof is skipped since it is straightforward.

\begin{lem}\label{ModalLem1}
For any matrix $A\in\bb{R}^{n\times n}$, define its absolute value by $\abs{A}\coloneqq \brac{\abs{A_{ij}}}^n_{i,j=1}$. Then, $\OpNorm{A}{2}{2}\leq \OpNorm{\abs{A}}{2}{2}$. Moreover, the largest eigenvalue of $\abs{A}$ can be written as
\begin{equation*}
\OpNorm{\abs{A}}{2}{2} = \max_{v\in\cc{S}^{n-1}_{+}} v^\top\abs{A}v,
\end{equation*} 
in which $\cc{S}^{n-1}_{+}$ denotes the collection of unit norm vectors with non-negative entries in $\bb{R}^n$.
\end{lem}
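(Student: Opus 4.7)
The plan has two parts corresponding to the two assertions of the lemma.

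For the first inequality $\OpNorm{A}{2}{2}\leq \OpNorm{\abs{A}}{2}{2}$, I would start from the bilinear variational form $\OpNorm{A}{2}{2}=\sup_{u,v\in\cc{S}^{n-1}} u^\top A v$. A single application of the triangle inequality entrywise yields
\[
\abs{u^\top A v} \;=\; \Bigl\lvert\sum_{i,j} u_i A_{ij} v_j\Bigr\rvert \;\leq\; \sum_{i,j} \abs{u_i}\abs{A_{ij}}\abs{v_j} \;=\; \abs{u}^\top \abs{A}\,\abs{v},
\]
where $\abs{u}$ denotes entrywise modulus. Since $\LpNorm{\abs{u}}{2}=\LpNorm{u}{2}$ and similarly for $v$, the right-hand side is bounded by $\OpNorm{\abs{A}}{2}{2}$; taking the supremum over $u,v\in \cc{S}^{n-1}$ then gives the claim.

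For the identity, I would appeal to the Rayleigh-quotient representation $\OpNorm{\abs{A}}{2}{2}=\max_{v\in\cc{S}^{n-1}} v^\top \abs{A} v$ (valid because in every invocation of this lemma the underlying matrix $A$ is symmetric, hence $\abs{A}$ is a symmetric non-negative matrix whose operator norm coincides with its largest eigenvalue). To restrict the maximisation to $\cc{S}^{n-1}_+$, I would note that entrywise non-negativity of $\abs{A}$ gives, for any $v\in\cc{S}^{n-1}$,
\[
v^\top \abs{A} v \;=\; \sum_{i,j} \abs{A}_{ij}\, v_i v_j \;\leq\; \sum_{i,j} \abs{A}_{ij}\,\abs{v_i}\abs{v_j} \;=\; \abs{v}^\top \abs{A}\,\abs{v},
\]
and $\abs{v}\in\cc{S}^{n-1}_+$. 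Thus replacing $v$ by $\abs{v}$ can only increase the quadratic form, so the supremum is attained on $\cc{S}^{n-1}_+$.

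The only delicate point is the Rayleigh-quotient identification $\OpNorm{\abs{A}}{2}{2}=\max_{\LpNorm{v}{2}=1} v^\top\abs{A} v$, which implicitly requires $\abs{A}$ to be symmetric (so that the operator norm equals the largest eigenvalue rather than merely the largest singular value). The lemma statement does not declare this assumption explicitly, but it is satisfied in every application in the paper, where $A$ is either a covariance matrix or a derivative of one. If one insists on full generality, the best one can say through Perron--Frobenius is that the spectral radius of $\abs{A}$ is attained by a non-negative eigenvector, but this quantity is in general strictly less than $\OpNorm{\abs{A}}{2}{2}$ for non-symmetric $A$. The cleanest remedy is to state (or tacitly assume) $A$ symmetric; this is the main conceptual obstacle, and once it is resolved the proof reduces to the two short triangle-inequality arguments above.
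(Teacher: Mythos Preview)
The paper does not actually prove this lemma; it explicitly states ``The proof is skipped since it is straightforward.'' Your argument is the standard one and is correct. Your observation that the Rayleigh-quotient identification $\OpNorm{\abs{A}}{2}{2}=\max_{v\in\cc{S}^{n-1}} v^\top\abs{A}v$ tacitly requires $A$ (hence $\abs{A}$) to be symmetric is a genuine point that the paper does not address; as you note, every invocation of the lemma in the paper involves a symmetric $A$, so the issue is harmless in context, but the lemma as stated is not literally true for arbitrary $A\in\bb{R}^{n\times n}$.
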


The next result examines the perturbation of some norms of $K_n\paren{\theta}$ with respect to $\theta$. It appears to be of great importance for proving Theorems \ref{GlobMinRate}--\ref{AsympDistThm} in the Section \ref{Proofs}.

\begin{prop}\label{UnifBndEigValCorrMatrix}
Suppose that $\cc{D}_n$ admits Assumption \ref{ObsLocAssu}. Moreover, Assumption \ref{Covar&ParamSpAssu} holds for $\Theta$ and $K$. Construct $n \times n$ correlation matrix $K_n\paren{\theta} \coloneqq \brac{K\paren{s-s',\theta}}_{s,s'\in\cc{D}_n}$ for any $\theta\in\Theta$.
\begin{enumerate}[label=(\alph*),leftmargin=*]
\item There are bounded positive scalars $\Lambda_{\min}$ and $\Lambda_{\max}$ (depending on $K$, $\Theta$, $d$ and $\delta$) such that
\begin{equation*}
\Lambda_{\min}\leq \min_{n\in\bb{N}}\min_{\theta\in\Theta} \frac{1}{\OpNorm{K^{-1}_n\paren{\theta}}{2}{2}},\quad  \max_{n\in\bb{N}}\max_{\theta\in\Theta} \OpNorm{K_n\paren{\theta}}{2}{2}\leq \Lambda_{\max}.
\end{equation*} 
\item There exist scalars $\ff{D}_{\min},\ff{D}_{\max}\in\paren{0,\infty}
$ (depending on $K$, $\Theta$, $d$ and $\delta$) such that
\begin{equation}\label{OpNormofDiffUppBnd}
\OpNorm{K_n\paren{\theta_2}-K_n\paren{\theta_1}}{2}{2}\leq \ff{D}_{\max}\LpNorm{\theta_2-\theta_1}{2},
\end{equation}
\begin{equation}\label{L2NormofDiffUppBnd}
\frac{1}{\sqrt{n}}\LpNorm{K_n\paren{\theta_2}-K_n\paren{\theta_1}}{2}\leq \ff{D}_{\max}\LpNorm{\theta_2-\theta_1}{2},
\end{equation}
and
\begin{equation}\label{L2NormofDiffLowBnd}
\frac{1}{\sqrt{n}}\LpNorm{K_n\paren{\theta_2}-K_n\paren{\theta_1}}{2}\geq \ff{D}_{\min}\LpNorm{\theta_2-\theta_1}{2},
\end{equation}
for any $\theta_1,\theta_2\in\Theta$.
\end{enumerate}
\end{prop}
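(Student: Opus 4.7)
The plan has two pieces: for part $\paren{a}$ I would use a Gershgorin row-sum bound combined with the polynomial decay in $\paren{A3}$ for the upper bound, and Bochner's spectral representation together with a separation-based $L^2$ bound on exponential sums for the lower bound; for part $\paren{b}$ I would use the mean value theorem together with $\paren{A3}$ for the two upper bounds and the identifiability condition $\paren{A2}$ for the Frobenius lower bound.

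For the upper bound in $\paren{a}$, symmetry gives $\OpNorm{K_n\paren{\theta}}{2}{2}\leq \max_i \sum_j \abs{K\paren{s_i-s_j,\theta}}$, and $\paren{A3}$ bounds each summand by $C_{K,\Theta}/\paren{1+\LpNorm{s_i-s_j}{2}^{d+1}}$. Because $\cc{D}_n$ is a $\delta$-perturbed unit lattice with pairwise distances at least $1-2\delta>0$, a shell-counting argument in $\bb{R}^d$ shows the row sum is bounded by a constant depending only on $d$, $\delta$, $C_{K,\Theta}$, giving $\Lambda_{\max}$.

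The lower bound in $\paren{a}$ is the main obstacle. Invoking Bochner, write $K\paren{s,\theta}=\int_{\bb{R}^d} e^{i\omega\cdot s} f\paren{\omega,\theta}\, d\omega$, where $f\paren{\cdot,\theta}$ is continuous (and jointly continuous in $\theta$ by dominated convergence, thanks to $\paren{A3}$). For $v\in\bb{R}^n$,
\begin{equation*}
v^\top K_n\paren{\theta}v = \int_{\bb{R}^d} f\paren{\omega,\theta}\abs{g_v\paren{\omega}}^2 d\omega,\qquad g_v\paren{\omega}\coloneqq \sum_j v_j e^{i\omega\cdot s_j}.
\end{equation*}
I would pick a compact $\cc{K}\subset\bb{R}^d$, large relative to $\paren{1-2\delta}^{-1}$, on which $\inf_{\theta\in\Theta}f\paren{\omega,\theta}\geq \alpha>0$ (using compactness of $\Theta$ and strict positivity of the spectral density, which holds for the standard families in Remark \ref{Rem3.1}), and invoke a Plancherel--P\'olya / large-sieve estimate for exponential sums on the separated set $\cc{D}_n$ to obtain $\int_{\cc{K}} \abs{g_v\paren{\omega}}^2 d\omega \geq c\paren{d,\delta}\LpNorm{v}{2}^2$. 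Combining gives $\Lambda_{\min}\geq \alpha\, c\paren{d,\delta}>0$. The delicate points are exhibiting a uniform-in-$\theta$ lower envelope for $f$ on $\cc{K}$ and establishing the sampling estimate with a constant independent of $n$; the perturbed-lattice separation is precisely what makes the latter possible.

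For part $\paren{b}$, the mean value theorem gives, for some $\theta^\star$ on the segment from $\theta_1$ to $\theta_2$,
\begin{equation*}
\abs{K\paren{s,\theta_2}-K\paren{s,\theta_1}}\leq \LpNorm{\theta_2-\theta_1}{2}\, \LpNorm{\nabla_\theta K\paren{s,\theta^\star}}{2} \leq \frac{\sqrt{m}\,C_{K,\Theta}\LpNorm{\theta_2-\theta_1}{2}}{1+\LpNorm{s}{2}^{d+1}},
\end{equation*}
where the last step uses $\paren{A3}$ with $q=1$. Plugging this pointwise bound into the row-sum argument of part $\paren{a}$ yields \eqref{OpNormofDiffUppBnd}, and squaring and summing over all $\paren{i,j}$ yields \eqref{L2NormofDiffUppBnd}, because for fixed $i$ the inner sum $\sum_j\paren{1+\LpNorm{s_i-s_j}{2}^{d+1}}^{-2}$ is bounded (since $2\paren{d+1}>d$), so the double sum is $\cc{O}\paren{n}$. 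Finally, for \eqref{L2NormofDiffLowBnd} I would use $\paren{A2}$: for every $s_i\in\cc{D}_n$ there exists $s_{j\paren{i}}\in\cc{D}_n\paren{s_i,r_1}$ with $\abs{K\paren{s_i-s_{j\paren{i}},\theta_2}-K\paren{s_i-s_{j\paren{i}},\theta_1}}\geq M\LpNorm{\theta_2-\theta_1}{2}$. The pairs $\paren{i,j\paren{i}}$ have distinct first coordinates, so restricting the Frobenius sum to them gives $\LpNorm{K_n\paren{\theta_2}-K_n\paren{\theta_1}}{2}^2 \geq n M^2 \LpNorm{\theta_2-\theta_1}{2}^2$, yielding $\ff{D}_{\min}=M$.
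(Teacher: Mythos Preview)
Your approach is essentially the same as the paper's. For part $\paren{b}$ the paper does exactly what you do: the fundamental theorem of calculus along the segment from $\theta_1$ to $\theta_2$ combined with $\paren{A3}$ reduces the operator-norm bound to a Gershgorin row-sum on the matrix $\ff{B}_{ij}=\paren{1+\LpNorm{s_i-s_j}{2}^{d+1}}^{-1}$, which is controlled via the perturbed-lattice geometry (the paper cites Lemma~D.1 of Bachoc for this shell-counting step); the Frobenius upper bound is deduced directly from the operator-norm bound via $n^{-1/2}\LpNorm{\cdot}{2}\leq\OpNorm{\cdot}{2}{2}$ rather than by a separate summation, but this is cosmetic; and the Frobenius lower bound is obtained exactly as you describe, restricting to one witness pair per row given by $\paren{A2}$, yielding $\ff{D}_{\min}=M$. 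For part $\paren{a}$ the paper omits the proof entirely and refers to Proposition~D.4 and Lemma~D.5 of Bachoc, whose argument is the spectral/Bochner route you outline (positivity of the spectral density on a compact frequency set plus a separation-based $L^2$ bound on the exponential sum), so your plan is on target there as well; just note that the uniform-in-$\theta$ strict positivity of $f\paren{\cdot,\theta}$ is an additional hypothesis beyond $\paren{A3}$ that Bachoc also imposes.
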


\begin{proof}
Part $\paren{a}$ is similar to Proposition $D.4$ and Lemma $D.5$ of \cite{F.Bachoc} and can be substantiated in an analogous way. So we skip its proof due to the space limit. Furthermore, Eq. \eqref{L2NormofDiffUppBnd} is an immediate consequence of Eq. \eqref{OpNormofDiffUppBnd}. Thus we only need to prove inequalities \eqref{OpNormofDiffUppBnd} and \eqref{L2NormofDiffLowBnd}. For brevity, define two $n\times n$ matrices $\ff{A}$ and $\ff{B}$ by 
\begin{equation*}
\ff{A}_{ij} = \abs{K\paren{s_i-s_j,\theta_2}-K\paren{s_i-s_j,\theta_1}},\quad\ff{B}_{ij} = \paren{1+ \LpNorm{s_i-s_j}{2}^{d+1} }^{-1}.
\end{equation*}
For distinct $\theta_1,\theta_2\in\Theta$, we have
\begin{eqnarray}\label{OpNormofDiffUppBndNrmlized}
\frac{\OpNorm{K_n\paren{\theta_2}-K_n\paren{\theta_1}}{2}{2}}{\LpNorm{\theta_2-\theta_1}{2}} &\RelNum{\paren{A}}{\leq}& \LpNorm{\theta_2-\theta_1}{2}^{-1}\OpNorm{\ff{A}}{2}{2} \RelNum{\paren{B}}{=} \LpNorm{\theta_2-\theta_1}{2}^{-1}\max_{v\in\cc{S}^{n-1}_+} \sum\limits^{n}_{i,j=1} v_i\ff{A}_{ij}v_j \nonumber\\
&\RelNum{\paren{C}}{=}& \max_{v\in\cc{S}^{n-1}_+}\sum\limits_{i,j=1}  v_iv_j\abs{\int\limits^{1}_{0} \InnerProd{\nabla_{\theta}K\paren{s_i-s_j,\theta_1+t\paren{\theta_2-\theta_1}}}{\frac{\theta_2-\theta_1}{\LpNorm{\theta_2-\theta_1}{2}}} dt} \nonumber\\
&\leq&  \max_{v\in\cc{S}^{n-1}_+}\sum\limits_{i,j=1}  v_iv_j \int\limits^{1}_{0} \LpNorm{\nabla_{\theta}K\paren{s_i-s_j,\theta_1+t\paren{\theta_2-\theta_1}}}{2} dt \nonumber\\
&\RelNum{\paren{D}}{\leq}& C_{K,\Theta}\OpNorm{\ff{B}}{2}{2}.
\end{eqnarray}
Notice that $\paren{A}$ and $\paren{B}$ follow from Lemma \ref{ModalLem1}. Identity $\paren{C}$ is implied by the fundamental theorem of calculus for line integrals and inequality $\paren{D}$ is a consequence of $\paren{A3}$ in the Assumption \ref{Covar&ParamSpAssu}. In the sequel we introduce an upper bound on $\OpNorm{\ff{B}}{2}{2}$. Using \emph{Gersgorin's circle theorem} (Theorem $6.1.1$, \cite{RA.Horn}), we get
\begin{equation}\label{GreshgorinCThm}
\OpNorm{\ff{B}}{2}{2}\leq 1+ \max_{i=1,\ldots,n} \sum\limits_{j\ne i} \frac{1}{1+\LpNorm{s_i-s_j}{2}^{d+1}}\RelNum{\paren{E}}{\leq} \frac{\ff{D}_{\max}}{C_{K,\Theta}}.
\end{equation}
Because of Assumption \ref{ObsLocAssu}, we can apply Lemma $D.1$ of \cite{F.Bachoc} to guarantee the existence of a bounded positive scalar $\ff{D}_{\max}$ for which inequality $\paren{E}$ holds. Combining the inequalities \eqref{OpNormofDiffUppBndNrmlized} and \eqref{GreshgorinCThm} concludes the proof of this part.
	
Now we turn to prove Eq. \eqref{L2NormofDiffLowBnd}. Recall $M$ and $r$ from $\paren{A2}$ in Assumption \ref{Covar&ParamSpAssu} and set $\ff{D}_{\min} = M$. Applying inequality \eqref{IdentifCond} terminates the proof by
\begin{eqnarray*}
\LpNorm{K_n\paren{\theta_2}-K_n\paren{\theta_1}}{2}^2&=& \sum\limits_{s,s'\in\cc{D}_n} \abs{K\paren{s-s',\theta_2}-K\paren{s-s',\theta_1}}^2\nonumber\\
&\geq& \sum\limits_{s\in\cc{D}_n} \max_{h\in\cc{D}_n\paren{s,r} } \abs{K\paren{h,\theta_2}-K\paren{h,\theta_1}}^2 
\geq M^2n\LpNorm{\theta_2-\theta_1}{2}^2 \nonumber\\
&=& \ff{D}^2_{\min}n\LpNorm{\theta_2-\theta_1}{2}^2.
\end{eqnarray*}
\end{proof}

For ease of reference, we present the following result as a standalone Proposition. Note that its proof is akin to that of Proposition \ref{UnifBndEigValCorrMatrix} and so it will be skipped to avoid repetition.

\begin{prop}\label{UnifBndEigValCorrMatrix2}
Suppose that $\cc{D}_n$ admits Assumption \ref{ObsLocAssu}. Moreover, $\Theta$ and $K$ satisfy Assumption \ref{Covar&ParamSpAssu2}. Construct the matrix $\partial K_n\paren{\theta}/\partial\theta_j  \coloneqq \brac{\partial K\paren{s-s',\theta}/\partial\theta_j }_{s,s'\in\cc{D}_n}$,for $\theta\in\Theta$ and $j=1,\ldots,m$.
\begin{enumerate}[label=(\alph*),leftmargin=*]
\item There is a bounded strictly positive scalar $\Lambda'_{\max}$ (depending on $K$, $\Theta$, $d$ and $\delta$) such that
\begin{equation*}
\max_{n\in\bb{N}}\max_{\theta\in\Theta} \OpNorm{\frac{\partial}{\partial\theta_j}K_n\paren{\theta}}{2}{2}\leq \Lambda'_{\max},
\end{equation*} 
\item There is $\ff{D}'_{\max}>0$ such that for any $\theta_1,\theta_2\in\Theta$
\begin{equation}\label{OpNormofDiffUppBnd2}
\OpNorm{\frac{\partial}{\partial\theta_j}K_n\paren{\theta_2}-\frac{\partial}{\partial\theta_j}K_n\paren{\theta_1}}{2}{2}\leq \ff{D}'_{\max}\LpNorm{\theta_2-\theta_1}{2}.
\end{equation}
\end{enumerate}	
\end{prop}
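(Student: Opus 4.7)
The plan is to mimic the argument used for Proposition \ref{UnifBndEigValCorrMatrix} almost verbatim, the only adjustment being a one-derivative shift in the regularity budget supplied by $\paren{A3}$ with $q=2$ (Assumption \ref{Covar&ParamSpAssu2}). Throughout, I use the fact that $\paren{A3}$ with $q=2$ furnishes the polynomial decay bound
\begin{equation*}
\max_{\theta\in\Theta}\paren{\abs{\frac{\partial}{\partial \theta_j}K\paren{s,\theta}} \vee \abs{\frac{\partial^2}{\partial \theta_{j_1}\partial \theta_{j_2}}K\paren{s,\theta}}}\leq \frac{C_{K,\Theta}}{1+\LpNorm{s}{2}^{d+1}},
\end{equation*}
for all $s\in\bb{R}^d$ and all indices, together with the counting lemma (Lemma D.1 of \cite{F.Bachoc}) that implies
\begin{equation*}
\max_{i}\sum_{j=1}^{n}\frac{1}{1+\LpNorm{s_i-s_j}{2}^{d+1}}\leq C_{\cc{D}},
\end{equation*}
for some bounded $C_{\cc{D}}>0$ depending only on $d$ and $\delta$ (this is where Assumption \ref{ObsLocAssu} enters).

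For part $\paren{a}$, I would invoke Lemma \ref{ModalLem1} to pass from the operator norm of $\frac{\partial}{\partial\theta_j}K_n\paren{\theta}$ to that of its entrywise absolute value, then apply Gersgorin's circle theorem exactly as in \eqref{GreshgorinCThm}:
\begin{equation*}
\OpNorm{\frac{\partial}{\partial\theta_j}K_n\paren{\theta}}{2}{2}\leq \max_{i}\sum_{j=1}^{n}\abs{\frac{\partial}{\partial\theta_j}K\paren{s_i-s_j,\theta}}\leq C_{K,\Theta}\, C_{\cc{D}}\;\eqqcolon\;\Lambda'_{\max}.
\end{equation*}
The bound is uniform in $\theta\in\Theta$ and $n\in\bb{N}$ because both $C_{K,\Theta}$ and $C_{\cc{D}}$ are.

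For part $\paren{b}$, I would repeat the chain of inequalities \eqref{OpNormofDiffUppBndNrmlized}. Define $\ff{A}_{ik}\coloneqq \abs{\frac{\partial}{\partial\theta_j}K\paren{s_i-s_k,\theta_2}-\frac{\partial}{\partial\theta_j}K\paren{s_i-s_k,\theta_1}}$. By Lemma \ref{ModalLem1}, $\OpNorm{\frac{\partial}{\partial\theta_j}K_n\paren{\theta_2}-\frac{\partial}{\partial\theta_j}K_n\paren{\theta_1}}{2}{2}\leq \OpNorm{\ff{A}}{2}{2}$. The fundamental theorem of calculus applied to the single-variable map $t\mapsto \frac{\partial}{\partial\theta_j}K\paren{s_i-s_k,\theta_1+t(\theta_2-\theta_1)}$, together with the Cauchy--Schwarz inequality, yields
\begin{equation*}
\ff{A}_{ik}\leq \LpNorm{\theta_2-\theta_1}{2}\int_0^1 \LpNorm{\nabla_{\theta}\paren{\frac{\partial}{\partial\theta_j}K}\paren{s_i-s_k,\theta_1+t(\theta_2-\theta_1)}}{2}dt\leq \frac{\sqrt{m}\,C_{K,\Theta}\LpNorm{\theta_2-\theta_1}{2}}{1+\LpNorm{s_i-s_k}{2}^{d+1}},
\end{equation*}
where the second bound uses the polynomial decay of the second derivatives guaranteed by $\paren{A3}$ with $q=2$. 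Plugging into Gersgorin's theorem once more and using the same lattice counting bound gives
\begin{equation*}
\OpNorm{\frac{\partial}{\partial\theta_j}K_n\paren{\theta_2}-\frac{\partial}{\partial\theta_j}K_n\paren{\theta_1}}{2}{2}\leq \sqrt{m}\,C_{K,\Theta}\,C_{\cc{D}}\LpNorm{\theta_2-\theta_1}{2}\;\eqqcolon\;\ff{D}'_{\max}\LpNorm{\theta_2-\theta_1}{2}.
\end{equation*}

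There is no genuine obstacle here; the sole ingredient beyond the proof of Proposition \ref{UnifBndEigValCorrMatrix} is having the polynomial decay of the \emph{second} partial derivatives of $K$ available to Lipschitz-bound the first derivative with respect to $\theta$. That control is exactly what Assumption \ref{Covar&ParamSpAssu2} supplies, so the argument carries over without modification.
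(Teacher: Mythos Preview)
Your proposal is correct and matches the paper's intended approach exactly: the paper explicitly states that the proof of this proposition is ``akin to that of Proposition \ref{UnifBndEigValCorrMatrix}'' and omits it, and you have carried out precisely that transposition, shifting the regularity budget by one derivative via $\paren{A3}$ with $q=2$. The use of Lemma \ref{ModalLem1}, the line-integral form of the fundamental theorem of calculus, and Gersgorin's theorem combined with the lattice counting bound mirrors the chain \eqref{OpNormofDiffUppBndNrmlized}--\eqref{GreshgorinCThm} in the paper's proof of Proposition \ref{UnifBndEigValCorrMatrix}.
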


The bounded positive scalars $\ff{D}_{\max}, \ff{D}_{\min}$ and $\Lambda_{\max}$, which have been introduced in the Proposition \ref{UnifBndEigValCorrMatrix}, become frequently apparent in the subsequent results in this section. It is also proper to remind the reader that $\cc{N}_{\epsilon}\paren{\cc{A}}$ stands for the $\epsilon-$net of $\cc{A}$ with respect to the Euclidean distance. Furthermore, the matrices $H_{\theta_1,\theta_2}$ and $M_{\theta_1,\theta_2}$ have been formerly defined in \eqref{MandH} for any pair of the correlation function parameters $\theta_1,\theta_2$. The succeeding two Lemmas (\ref{DiscritizationLemma} and \ref{DiscritizationLemma2}), which come in handy in the proof of Theorem \ref{GlobMinRate}, establish a probabilistic upper bound on the maximum of a quadratic Gaussian expression over a uncountable set $\Theta$ in terms of its largest value over one of its finite subset. To avoid repetition, we omit the proof of the latter Lemma. Since it is akin to that of the former one (with a slightly different algebra).

\begin{lem}\label{DiscritizationLemma}
Let $Z\in\bb{R}^n$ be a standard Gaussian vector and suppose that $\cc{D}_n$ satisfies Assumption \ref{ObsLocAssu}. Furthermore, assume that $\Theta$ and $K$ admit Assumption \ref{Covar&ParamSpAssu}. For any vanishing positive sequence $\set{r_n}_{n\in\bb{N}}$, any non-empty $\bar{\Theta}\subset\Theta$ and each $\theta_0\in\Theta$,
\begin{equation}\label{DiscIneq}
\LHS\coloneqq \lim\limits_{n\rightarrow\infty}\Pr\set{\paren{\sup_{\theta\in\bar{\Theta}} Z^\top H_{\theta_0,\theta}Z - \sup_{\theta\in\cc{N}_{r_n}\paren{\bar{\Theta}}} Z^\top H_{\theta_0,\theta}Z} \geq Cr_n\sqrt{n}}= 0,
\end{equation}
where $C = 2\Lambda_{\max}\paren{1+\ff{D}_{\max}}$. 
\end{lem}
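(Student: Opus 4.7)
The plan is a standard net/discretization argument that reduces the uncountable supremum to the finite one via a sharp operator-norm Lipschitz estimate on the matrix-valued map $\theta \mapsto H_{\theta_0,\theta}$. For each $\theta \in \bar{\Theta}$ pick a closest $\pi(\theta) \in \cc{N}_{r_n}(\bar{\Theta})$, so that $\LpNorm{\theta-\pi(\theta)}{2} \leq r_n$. Adding and subtracting $Z^\top H_{\theta_0,\pi(\theta)}Z$ before taking the supremum over $\theta$ gives
\begin{equation*}
\sup_{\theta \in \bar{\Theta}} Z^\top H_{\theta_0,\theta} Z - \sup_{\theta' \in \cc{N}_{r_n}(\bar{\Theta})} Z^\top H_{\theta_0,\theta'} Z \;\leq\; \LpNorm{Z}{2}^2 \cdot \sup_{\theta \in \bar{\Theta}} \OpNorm{H_{\theta_0,\theta} - H_{\theta_0,\pi(\theta)}}{2}{2},
\end{equation*}
so the problem reduces to controlling the operator-norm increment of $H$ and the squared Euclidean norm of $Z$.

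The core estimate to establish is the uniform Lipschitz bound $\OpNorm{H_{\theta_0,\theta_1} - H_{\theta_0,\theta_2}}{2}{2} \leq C_0\LpNorm{\theta_1-\theta_2}{2}/\sqrt{n}$ with $C_0 \leq 2\Lambda_{\max}(1+\ff{D}_{\max})$. I would prove this by the splitting
\begin{equation*}
\frac{K_n(\theta_1)}{\LpNorm{K_n(\theta_1)}{2}} - \frac{K_n(\theta_2)}{\LpNorm{K_n(\theta_2)}{2}} = \frac{K_n(\theta_1)-K_n(\theta_2)}{\LpNorm{K_n(\theta_1)}{2}} + K_n(\theta_2)\,\frac{\LpNorm{K_n(\theta_2)}{2}-\LpNorm{K_n(\theta_1)}{2}}{\LpNorm{K_n(\theta_1)}{2}\LpNorm{K_n(\theta_2)}{2}},
\end{equation*}
combined with the reverse triangle inequality $\bigl|\LpNorm{K_n(\theta_2)}{2}-\LpNorm{K_n(\theta_1)}{2}\bigr| \leq \LpNorm{K_n(\theta_1)-K_n(\theta_2)}{2}$, and Proposition \ref{UnifBndEigValCorrMatrix} (yielding $\OpNorm{K_n(\theta_1)-K_n(\theta_2)}{2}{2} \leq \ff{D}_{\max}\LpNorm{\theta_1-\theta_2}{2}$, $\LpNorm{K_n(\theta_1)-K_n(\theta_2)}{2} \leq \sqrt{n}\,\ff{D}_{\max}\LpNorm{\theta_1-\theta_2}{2}$, and $\OpNorm{K_n(\theta)}{2}{2} \leq \Lambda_{\max}$), together with the elementary lower bound $\LpNorm{K_n(\theta)}{2}\geq \sqrt{n}$ inherited from the unit diagonal of $K_n(\theta)$. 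Sandwiching on the left and right by $K_n^{1/2}(\theta_0)$ contributes at most one further factor $\Lambda_{\max}$ in operator norm.

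Finally, a chi-squared tail bound gives $\Pr(\LpNorm{Z}{2}^2 \geq (1+\epsilon)n)\to 0$ for any fixed $\epsilon>0$, so on the complementary event the first display is bounded by $(1+\epsilon)C_0\, r_n\sqrt{n}$; tuning $\epsilon\searrow 0$ delivers the stated $C = 2\Lambda_{\max}(1+\ff{D}_{\max})$. The only genuinely delicate step is the Lipschitz estimate: a naive bound $\OpNorm{H_{\theta_0,\theta}-H_{\theta_0,\theta'}}{2}{2} = \cc{O}(\LpNorm{\theta-\theta'}{2})$ would be too coarse by a factor $\sqrt{n}$, and the entire argument rests on the cancellation produced by the Frobenius normalization $\LpNorm{K_n(\theta)}{2} \asymp \sqrt{n}$ in the denominator of $H_{\theta_0,\theta}$, which is exactly what produces the required $1/\sqrt{n}$ gain and allows the multiplication by $\LpNorm{Z}{2}^2 = \cc{O}(n)$ to land at the scale $r_n\sqrt{n}$.
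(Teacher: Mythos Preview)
Your argument is essentially the paper's own proof: pick a nearest net point, bound the supremum difference by a quadratic form times the operator-norm increment of the normalized kernel, split that increment exactly as you do, invoke Proposition~\ref{UnifBndEigValCorrMatrix} and $\LpNorm{K_n(\theta)}{2}\geq\sqrt{n}$, and finish with a concentration bound on the squared norm. The one substantive difference is where the $K_n^{1/2}(\theta_0)$ factors are absorbed: the paper sets $Y=K_n^{1/2}(\theta_0)Z$ and pulls out $\LpNorm{Y}{2}^2$, then uses $\mathrm{E}\LpNorm{Y}{2}^2=\tr K_n(\theta_0)=n$ together with Hanson--Wright to get $\LpNorm{Y}{2}^2\leq 2n$ with high probability; this avoids paying the extra factor $\Lambda_{\max}$ that your ``sandwich'' step incurs. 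Consequently your route yields $C_0=\Lambda_{\max}\ff{D}_{\max}(1+\Lambda_{\max})$ rather than the $2\Lambda_{\max}(1+\ff{D}_{\max})$ you assert (and the paper's own computation actually produces $2\ff{D}_{\max}(1+\Lambda_{\max})$, so the stated constant is already loose). This is a harmless constant discrepancy; the structure of both proofs is identical.
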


\begin{proof}
For simplicity, define $Y = K^{1/2}_n\paren{\theta_0}Z$. Choose $\theta\in\bar{\Theta}$ arbitrarily. Trivially, there is $\beta_{\theta}\in\cc{N}_{r_n}\paren{\bar{\Theta}}$ such that $\LpNorm{\theta-\beta_{\theta}}{2}\leq r_n$. Thus, with probability one
\begin{eqnarray}\label{UppBndLHS}
\LHS &\leq& \sup_{\theta\in\bar{\Theta}} \paren{Z^\top H_{\theta_0,\theta}Z-Z^\top H_{\theta_0,\beta_{\theta}}Z} = \sup_{\theta\in\bar{\Theta}} \set{Y \paren{\frac{K_n\paren{\theta}}{\LpNorm{K_n\paren{\theta}}{2}}-\frac{K_n\paren{\beta_{\theta}}}{\LpNorm{K_n\paren{\beta_{\theta}}}{2}}}Y}\nonumber\\
&\leq& \LpNorm{Y}{2}^2 \sup_{\theta\in\bar{\Theta}}\OpNorm{\frac{K_n\paren{\theta}}{\LpNorm{K_n\paren{\theta}}{2}}-\frac{K_n\paren{\beta_{\theta}}}{\LpNorm{K_n\paren{\beta_{\theta}}}{2}}}{2}{2}.
\end{eqnarray}
Next we obtain an upper bound on the operator norm term in the inequality \eqref{UppBndLHS}. Observe that
\begin{eqnarray}\label{UppBndLHS2}
\OpNorm{\frac{K_n\paren{\theta}}{\LpNorm{K_n\paren{\theta}}{2}}-\frac{K_n\paren{\beta_{\theta}}}{\LpNorm{K_n\paren{\beta_{\theta}}}{2}}}{2}{2} &\RelNum{\paren{A}}{\leq}& \set{\frac{\OpNorm{K_n\paren{\theta}-K_n\paren{\beta_{\theta}}}{2}{2}}{\LpNorm{K_n\paren{\theta}}{2}} + \frac{\OpNorm{K_n\paren{\beta_{\theta}}}{2}{2}\LpNorm{K_n\paren{\theta}-K_n\paren{\beta_{\theta}}}{2}}{\LpNorm{K_n\paren{\theta}}{2}\LpNorm{K_n\paren{\beta_{\theta}}}{2}} }\nonumber\\
&\RelNum{\paren{B}}{\leq}& \set{\frac{\OpNorm{K_n\paren{\theta}-K_n\paren{\beta_{\theta}}}{2}{2}}{\sqrt{n}} + \frac{\OpNorm{K_n\paren{\beta_{\theta}}}{2}{2}\LpNorm{K_n\paren{\theta}-K_n\paren{\beta_{\theta}}}{2}}{n} }\nonumber\\
&\RelNum{\paren{C}}{\leq}& \paren{ \frac{\ff{D}_{\max}\LpNorm{\theta-\beta_{\theta}}{2}}{\sqrt{n}} + \frac{\Lambda_{\max}\ff{D}_{\max}\LpNorm{\theta-\beta_{\theta}}{2}\sqrt{n}}{n} }\nonumber\\
&\leq&\frac{\ff{D}_{\max}\paren{1+\Lambda_{\max}}}{\sqrt{n}} r_n.
\end{eqnarray}
Here $\paren{A}$ follows from the triangle inequality alongside some straightforward algebra. Also $\paren{B}$ can be deduced from the fact that $K_n\paren{\cdot}$ is correlation matrix and $\paren{C}$ is implied by Proposition \ref{UnifBndEigValCorrMatrix}. Substituting Eq. \eqref{UppBndLHS2} into Eq. \eqref{UppBndLHS} leads to
\begin{equation}\label{UppBndLHS3}
\LHS \leq \frac{\ff{D}_{\max}\paren{1+\Lambda_{\max}}}{\sqrt{n}} r_n \LpNorm{Y}{2}^2.
\end{equation}
We finish the proof by finding a tight probabilistic upper bound on $\LpNorm{Y}{2}^2$. Obvious calculations show that ${\rm E} \LpNorm{Y}{2}^2 = n$. Hanson-Wright inequality (Theorem $1.1$, \cite{M.Rudelson}) ensures the existence of a bounded universal constant $C>0$ such that
\begin{eqnarray*}
\Pr\paren{\abs{\LpNorm{Y}{2}^2-n}\geq C\LpNorm{K_n\paren{\theta_0}}{2}\sqrt{\ln n}}&\leq& \exp\set{-\paren{ \ln n \vee \sqrt{\ln n} \frac{\LpNorm{K_n\paren{\theta_0}}{2}}{ \OpNorm{K_n\paren{\theta_0}}{2}{2} } }}\\
&\RelNum{\paren{D}}{\leq}& \exp\set{-\paren{ \ln n \vee \frac{\sqrt{n\ln n}}{\Lambda_{\max}}}} = \frac{1}{n}.
\end{eqnarray*}
Notice that $\paren{D}$ is implied by the part $\paren{a}$ of Proposition \ref{UnifBndEigValCorrMatrix} and the last inequality holds whenever $n\geq n_0$ for some appropriately chosen $n_0$. Thus, with probability at least $1-n^{-\paren{1+\xi}}$, we have
\begin{equation}\label{UppBndNorm2Y}
\LpNorm{Y}{2}^2\leq n + C\LpNorm{K_n\paren{\theta_0}}{2}\sqrt{\paren{1+\xi}\ln n}\leq n+C\Lambda_{\max}\sqrt{\paren{1+\xi}n\ln n}\leq 2n.
\end{equation}
The last inequality in \eqref{UppBndNorm2Y} holds if $n\geq n'_0$ for some $n'_0$. Assuming that $n$ is at least $n_0\vee n'_0$, combining the inequalities \eqref{UppBndLHS3} and \eqref{UppBndNorm2Y} concludes the proof.
\end{proof}

\begin{lem}\label{DiscritizationLemma2}
Let $Z\in\bb{R}^n$ be a standard Gaussian vector. Suppose that Assumption \ref{ObsLocAssu} and Assumption \ref{Covar&ParamSpAssu} hold for $\cc{D}_n$, $\Theta$ and $K$. For any strictly positive vanishing sequence $\set{r_n}^{\infty}_{n=1}$, any non-empty $\bar{\Theta}\subset\Theta$ and arbitrary $\theta_0\in\Theta$, 
\begin{equation*}
\Pr\set{ \sup_{\theta\in\bar{\Theta}} \abs{Z^\top\paren{M_{\theta_0,\theta}-M_{\theta_0,\beta_{\theta}}} Z}\geq Cr_n}\rightarrow 0,\quad\mbox{as}\; n\rightarrow\infty.
\end{equation*}
Here $\beta_{\theta}$ represents the nearest element of $\cc{N}_{r_n}\paren{\bar{\Theta}}$ to $\theta$ and $C = 2\ff{D}_{\max}\paren{1+2\Lambda^2_{\max}}$.
\end{lem}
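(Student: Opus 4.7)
The plan is to adapt the strategy of Lemma \ref{DiscritizationLemma}, reducing the supremum to a deterministic operator-norm estimate times a high-probability bound on $\LpNorm{Y}{2}^2$ for $Y = K_n^{1/2}(\theta_0)Z$. First I would rewrite the quadratic form as
\begin{equation*}
Z^\top \paren{M_{\theta_0,\theta}-M_{\theta_0,\beta_\theta}} Z \;=\; Y^\top \set{\frac{K_n\paren{\theta}}{\LpNorm{K_n\paren{\theta}}{2}^{2}} - \frac{K_n\paren{\beta_\theta}}{\LpNorm{K_n\paren{\beta_\theta}}{2}^{2}}} Y,
\end{equation*}
which decouples $\theta_0$ from the $\theta$-dependent part, allowing the latter to be controlled deterministically and uniformly in $\theta\in\bar\Theta$.

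Second, I would bound the bracketed matrix in operator norm via the telescoping identity
\begin{equation*}
\frac{K_n\paren{\theta}}{\LpNorm{K_n\paren{\theta}}{2}^{2}} - \frac{K_n\paren{\beta_\theta}}{\LpNorm{K_n\paren{\beta_\theta}}{2}^{2}} = \frac{K_n\paren{\theta}-K_n\paren{\beta_\theta}}{\LpNorm{K_n\paren{\theta}}{2}^{2}} + K_n\paren{\beta_\theta}\cdot \frac{\LpNorm{K_n\paren{\beta_\theta}}{2}^{2}-\LpNorm{K_n\paren{\theta}}{2}^{2}}{\LpNorm{K_n\paren{\theta}}{2}^{2}\LpNorm{K_n\paren{\beta_\theta}}{2}^{2}}.
\end{equation*}
Since $K_n(\cdot)$ is a correlation matrix, $\LpNorm{K_n(\cdot)}{2}^{2}\ge n$; combined with Proposition \ref{UnifBndEigValCorrMatrix} (which furnishes $\OpNorm{K_n(\cdot)}{2}{2}\le \Lambda_{\max}$, $\OpNorm{K_n(\theta)-K_n(\beta_\theta)}{2}{2}\le \ff{D}_{\max}\LpNorm{\theta-\beta_\theta}{2}$, and $\LpNorm{K_n(\theta)-K_n(\beta_\theta)}{2}\le \sqrt{n}\ff{D}_{\max}\LpNorm{\theta-\beta_\theta}{2}$) and the factorization $\LpNorm{K_n(\beta_\theta)}{2}^{2}-\LpNorm{K_n(\theta)}{2}^{2} = (\LpNorm{K_n(\beta_\theta)}{2}+\LpNorm{K_n(\theta)}{2})(\LpNorm{K_n(\beta_\theta)}{2}-\LpNorm{K_n(\theta)}{2})$, the two summands above have operator norm at most $\ff{D}_{\max}r_n/n$ and $2\Lambda_{\max}^{2}\ff{D}_{\max} r_n/n$ respectively, using $\LpNorm{\theta-\beta_\theta}{2}\le r_n$ and the reverse triangle inequality for $\LpNorm{\cdot}{2}$.

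Third, applying $|Y^\top A Y|\le \LpNorm{Y}{2}^{2}\OpNorm{A}{2}{2}$ yields
\begin{equation*}
\sup_{\theta\in\bar\Theta}\abs{Z^\top\paren{M_{\theta_0,\theta}-M_{\theta_0,\beta_\theta}}Z} \le \frac{\ff{D}_{\max}\paren{1+2\Lambda_{\max}^{2}}}{n}\LpNorm{Y}{2}^{2}\, r_n,
\end{equation*}
which is already uniform in $\theta$. Finally I would invoke the Hanson--Wright bound exactly as in the proof of Lemma \ref{DiscritizationLemma} to show $\LpNorm{Y}{2}^{2}\le 2n$ with probability $1-o(1)$, and then substitute to conclude with the constant $C = 2\ff{D}_{\max}(1+2\Lambda_{\max}^{2})$.

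The main point requiring care is the operator-norm estimate in the second step: the numerator $\LpNorm{K_n(\beta_\theta)}{2}^{2}-\LpNorm{K_n(\theta)}{2}^{2}$ scales like $n\, r_n$, which could a priori spoil the normalization, but the denominator $\LpNorm{K_n(\theta)}{2}^{2}\LpNorm{K_n(\beta_\theta)}{2}^{2}\asymp n^{2}$ absorbs the extra $n$, leaving a bound of order $r_n/n$. Once this is checked, the rest is mechanical and parallels Lemma \ref{DiscritizationLemma} verbatim.
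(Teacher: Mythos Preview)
Your proposal is correct and follows exactly the approach the paper indicates (the paper omits the proof, saying it is ``akin to that of'' Lemma~\ref{DiscritizationLemma} ``with a slightly different algebra''). Your telescoping of $K_n(\theta)/\LpNorm{K_n(\theta)}{2}^2 - K_n(\beta_\theta)/\LpNorm{K_n(\beta_\theta)}{2}^2$, the operator-norm bookkeeping via Proposition~\ref{UnifBndEigValCorrMatrix}, and the concluding Hanson--Wright bound on $\LpNorm{Y}{2}^2$ reproduce the stated constant $C = 2\ff{D}_{\max}(1+2\Lambda_{\max}^2)$ precisely.
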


The next result not only appears in the proof of Theorem \ref{LocMinRate}, but is also required for substantiating some of the subsequent results in this section. Although it can be easily verified by Lemma $A.1$ of \cite{F.Bachoc}, we provide its proof for the sake of completeness.

\begin{lem}\label{ModalLem2}
Under the same notation and conditions as Proposition \ref{UnifBndEigValCorrMatrix},
\begin{equation}\label{LowBndModalIneq}
\LpNorm{K_n\paren{\theta_2}-\frac{\LpNorm{K_n\paren{\theta_2}}{2}}{\LpNorm{K_n\paren{\theta_1}}{2}}K_n\paren{\theta_1}}{2}\geq \frac{\LpNorm{K_n\paren{\theta_2}-K_n\paren{\theta_1}}{2}}{\Lambda_{\max}}.
\end{equation}
\end{lem}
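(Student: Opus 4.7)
My plan has three steps. Writing $A = K_n\paren{\theta_1}$, $B = K_n\paren{\theta_2}$, $a = \LpNorm{A}{2}$, $b = \LpNorm{B}{2}$ and $c := b/a$, I would first verify the algebraic identity
\begin{equation*}
\LpNorm{B - cA}{2}^{2} \;=\; \frac{b}{a}\brac{\LpNorm{B - A}{2}^{2} - (b - a)^{2}}
\end{equation*}
by direct expansion (both sides equal $(2b/a)(ab - \InnerProd{A}{B})$). This reduces the lemma to controlling $(b - a)^{2}$ relative to $\LpNorm{B - A}{2}^{2}$.

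The main step is to establish the quantitative bound
\begin{equation*}
(b - a)^{2} \;\leq\; \paren{1 - \frac{1}{\Lambda_{\max}}}\LpNorm{B - A}{2}^{2},
\end{equation*}
which exploits the unit-diagonal structure of the correlation matrices $A$ and $B$. Since $\paren{B - A}_{ii} = 0$ and $\paren{B + A}_{ii} = 2$ for every $i$, the identity $b^{2} - a^{2} = \InnerProd{B + A}{B - A}$ collapses to a sum over off-diagonal indices, and Cauchy-Schwarz on those indices gives
\begin{equation*}
(b - a)^{2}(a + b)^{2} \;\leq\; \LpNorm{B - A}{2}^{2}\,\paren{\LpNorm{B + A}{2}^{2} - 4n}.
\end{equation*}
The Frobenius triangle inequality $\LpNorm{B + A}{2} \leq a + b$ then yields $(b - a)^{2} \leq \LpNorm{B - A}{2}^{2}\bigl(1 - 4n/(a + b)^{2}\bigr)$. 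Finally, Proposition \ref{UnifBndEigValCorrMatrix}(a) furnishes the bound $a^{2}, b^{2} \leq n\Lambda_{\max}$, since $\tr\paren{A^{2}} \leq \Lambda_{\max}\tr\paren{A} = n\Lambda_{\max}$ (and analogously for $B$), which forces $(a + b)^{2} \leq 4n\Lambda_{\max}$ and hence $4n/(a + b)^{2} \geq 1/\Lambda_{\max}$.

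Chaining the identity with the quantitative bound gives $\LpNorm{B - cA}{2}^{2} \geq (b/a)\LpNorm{B - A}{2}^{2}/\Lambda_{\max}$. Since $b \geq \sqrt{n}$ and $a \leq \sqrt{n\Lambda_{\max}}$, one has $b/a \geq 1/\sqrt{\Lambda_{\max}} \geq 1/\Lambda_{\max}$ (the last inequality using $\Lambda_{\max} \geq 1$, which holds for any correlation matrix), so $\LpNorm{B - cA}{2}^{2} \geq \LpNorm{B - A}{2}^{2}/\Lambda_{\max}^{2}$ and \eqref{LowBndModalIneq} follows. I anticipate the principal difficulty to lie in the Cauchy-Schwarz-plus-diagonal manipulation of the middle step, since it is what replaces the naive bound $\abs{b - a} \leq \LpNorm{B - A}{2}$ (too weak on its own, as it allows the ratio $\LpNorm{B - cA}{2}/\LpNorm{B - A}{2}$ to degenerate) by a multiplicative improvement with the sharp constant $\sqrt{1 - 1/\Lambda_{\max}}$.
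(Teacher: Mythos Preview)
Your proof is correct and takes a genuinely different route from the paper's. The paper argues \emph{row-by-row}: writing $\lambda=\LpNorm{K_n(\theta_2)}{2}/\LpNorm{K_n(\theta_1)}{2}$, it expands $\varXi^2=\sum_i\bigl[(\lambda-1)^2+\sum_{j\ne i}(K(s_i-s_j,\theta_2)-\lambda K(s_i-s_j,\theta_1))^2\bigr]$ and then invokes an external projection inequality (Lemma~A.1 of \cite{F.Bachoc}) on each row, using that the $i$-th row of both $K_n(\theta_1)$ and $K_n(\theta_2)$ has a $1$ in position $i$; this gives the lower bound $\sum_i \|b_i-a_i\|^2/\|a_i\|^2\ge \LpNorm{K_n(\theta_2)-K_n(\theta_1)}{2}^2/\max_i\|a_i\|^2$, and the maximum column $\ell_2$-norm is dominated by $\Lambda_{\max}$.

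Your argument is instead \emph{global}: the algebraic identity $\LpNorm{B-cA}{2}^{2}=(b/a)\bigl[\LpNorm{B-A}{2}^{2}-(b-a)^{2}\bigr]$ reduces everything to a single scalar inequality, and the off-diagonal Cauchy--Schwarz step exploits the unit diagonal in one stroke rather than row by row. The payoff is that your proof is entirely self-contained (no appeal to Bachoc's lemma) and the role of $\Lambda_{\max}$ enters transparently through $a^{2},b^{2}\le n\Lambda_{\max}$. The paper's approach, by contrast, is slightly sharper at the intermediate stage (it bounds by the maximum column norm rather than the Frobenius norm), though both land on the same final constant $\Lambda_{\max}^{-1}$.
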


\begin{proof}
Let $\varXi$ denote the left hand side of \eqref{LowBndModalIneq} and define $\lambda\coloneqq \LpNorm{K_n\paren{\theta_1}}{2}^{-1}\LpNorm{K_n\paren{\theta_2}}{2}$. Applying Lemma $A.1.$ of \cite{F.Bachoc} along obvious calculations imply that 
\begin{eqnarray*}
\varXi^2 &=& \sum\limits_{i=1}^{n} \set{\paren{\lambda-1}^2 + \sum\limits_{j\ne i} \Bigparen{ K\paren{s_i-s_j,\theta_2}-\lambda K\paren{s_i-s_j,\theta_1}}^2}\\
&\geq& \sum\limits_{i=1}^{n} \frac{\sum\limits_{j=1}^{n}\Bigparen{K\paren{s_i-s_j,\theta_2}-K\paren{s_i-s_j,\theta_1}}^2}{\sum\limits_{j=1}^{n}K^2\paren{s_i-s_j,\theta_1}}\geq \frac{\LpNorm{K_n\paren{\theta_2}-K_n\paren{\theta_1}}{2}^2}{ \max_{i=1,\ldots,n}\sum\limits_{j=1}^{n}K^2\paren{s_i-s_j,\theta_1} }.
\end{eqnarray*}
Notice that the operator norm of any matrix is no smaller than the largest $\ell_2$ norm of its columns. That is,
\begin{equation*}
\max_{i=1,\ldots,n}\sum\limits_{j=1}^{n}K^2\paren{s_i-s_j,\theta_1}\leq \OpNorm{K_n\paren{\theta_1}}{2}{2}^2\leq \Lambda^2_{\max}.
\end{equation*}
The combination of these two inequalities ends the proof.
\end{proof}

Now we state a lemma which plays a crucial role in the proof of Theorem \ref{GlobMinRate}.

\begin{lem}\label{Hanson-WrightConcIneq}
Let $Z\in\bb{R}^n$ be a standard Gaussian vector and let $C,\xi>0$. Suppose that $\Theta$ and $K$ satisfy Assumption \ref{Covar&ParamSpAssu}. Select 
$\theta_1,\theta_2\in\Theta$ such that
\begin{equation}\label{CondHWLem}
\LpNorm{\theta_2-\theta_1}{2}\geq C_{\min}\sqrt{\frac{\ln n}{n}},
\end{equation}
in which $C_{\min}\coloneqq 4\ff{D}^{-1}_{\min}\Lambda^2_{\max}\sqrt{C'\paren{1+\xi}}$ (Recall $\ff{D}_{\min}$ and $\Lambda_{\max}$, from the Proposition \ref{UnifBndEigValCorrMatrix}), for some appropriately chosen universal constant $C'>0$. There exists $n_0 = \cc{O}\paren{1}$ (depending on $C$, $\xi$, $K$, $\cc{D}_n$ and $\Theta$) such that for any $n\geq n_0$
\begin{equation}\label{Hanson-WrightIneq}
p\coloneqq\Pr\set{ Z^\top \paren{H_{\theta_2,\theta_2}-H_{\theta_2,\theta_1} }Z\leq  C\sqrt{\frac{\ln n}{n}}}\leq n^{-\paren{1+\xi}}.
\end{equation}
Refer to the identity \eqref{MandH} for the definition of $H_{\theta_2,\theta_1}$.
\end{lem}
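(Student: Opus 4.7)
The plan is to apply the Hanson-Wright inequality to the quadratic form $Z^\top A Z$ with $A \coloneqq H_{\theta_2,\theta_2} - H_{\theta_2,\theta_1}$, after first showing that its mean $\tr(A)$ is large enough to dominate the threshold $C\sqrt{n^{-1}\ln n}$. The expectation admits a closed form: unwinding \eqref{MandH} gives $\tr(H_{\theta_2,\theta_2}) = \LpNorm{K_n(\theta_2)}{2}$ and $\tr(H_{\theta_2,\theta_1}) = \InnerProd{K_n(\theta_1)}{K_n(\theta_2)}/\LpNorm{K_n(\theta_1)}{2}$, so introducing the Frobenius-unit matrices $U_i \coloneqq K_n(\theta_i)/\LpNorm{K_n(\theta_i)}{2}$ yields
\[
{\rm E}\,Z^\top A Z \;=\; \tr(A) \;=\; \LpNorm{K_n(\theta_2)}{2}\paren{1-\InnerProd{U_1}{U_2}} \;=\; \tfrac{1}{2}\LpNorm{K_n(\theta_2)}{2}\,\LpNorm{U_1-U_2}{2}^2.
\]

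The next step is a sharp lower bound on $\tr(A)$. Taking the scalar $\lambda = \LpNorm{K_n(\theta_2)}{2}/\LpNorm{K_n(\theta_1)}{2}$ in Lemma \ref{ModalLem2} gives $\LpNorm{U_1-U_2}{2} \geq \LpNorm{K_n(\theta_2)-K_n(\theta_1)}{2}/(\Lambda_{\max}\LpNorm{K_n(\theta_2)}{2})$, and coupling this with Proposition \ref{UnifBndEigValCorrMatrix}(a)-(b) together with the elementary $\LpNorm{K_n(\theta_2)}{2}\geq\sqrt{n}$ produces
\[
\tr(A) \;\geq\; \frac{\ff{D}_{\min}^2\,\sqrt{n}\,\LpNorm{\theta_2-\theta_1}{2}^2}{2\,\Lambda_{\max}^4} \;\geq\; \frac{8\,C'(1+\xi)\,\ln n}{\sqrt{n}}
\]
once \eqref{CondHWLem} is plugged in with the stated $C_{\min}$. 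This exceeds $C\sqrt{n^{-1}\ln n}$ by a factor of $\sqrt{\ln n}$, so for $n\geq n_0$ the event in \eqref{Hanson-WrightIneq} is contained in $\set{\abs{Z^\top AZ-\tr(A)}\geq \tr(A)/2}$. A coarse triangle-inequality estimate using Proposition \ref{UnifBndEigValCorrMatrix}(a) and $\LpNorm{K_n(\theta_i)}{2}\geq \sqrt{n}$ also gives $\OpNorm{A}{2}{2}\leq 2\,\Lambda_{\max}^2/\sqrt{n}$.

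With these two estimates the Hanson-Wright inequality (Theorem~1.1 of \cite{M.Rudelson}) applied at $t=\tr(A)/2$ yields
\[
\Pr\paren{\abs{Z^\top A Z-\tr(A)}\geq \tfrac{\tr(A)}{2}} \;\leq\; 2\exp\Bigset{-c\paren{\tfrac{\tr(A)^2}{4\LpNorm{A}{2}^2}\wedge \tfrac{\tr(A)}{2\OpNorm{A}{2}{2}}}},
\]
and the key observation is that the sub-exponential term alone already suffices, because the $\LpNorm{K_n(\theta)}{2}$-normalization baked into $H$ produces $\OpNorm{A}{2}{2}=\cc{O}(1/\sqrt{n})$. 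Indeed, combining the two displays above gives $\tr(A)/(2\OpNorm{A}{2}{2})\geq 2C'(1+\xi)\ln n/\Lambda_{\max}^2$, and selecting the universal constant $C'$ so that $2cC'/\Lambda_{\max}^2$ is a little larger than $1$ produces $2\,n^{-(1+\xi)(1+\delta)}\leq n^{-(1+\xi)}$ for $n\geq n_0$. The delicate step I expect to require the most care is the lower bound on $\tr(A)$: the naive estimate $\LpNorm{U_1-U_2}{2}\leq 2$ on the unit-Frobenius-sphere gives nothing useful, and only the Lemma~\ref{ModalLem2} cross-normalization allows one to extract the essential quadratic dependence $\LpNorm{U_1-U_2}{2}^2\gtrsim \LpNorm{\theta_2-\theta_1}{2}^2$ that ultimately drives the $\ln n$ factor in the exponent.
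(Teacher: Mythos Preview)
Your overall strategy matches the paper's: compute $\tr(A)$ explicitly, lower-bound it via Lemma~\ref{ModalLem2} and Proposition~\ref{UnifBndEigValCorrMatrix}, reduce the event to $\{|Z^\top AZ-\tr(A)|\geq \tr(A)/2\}$, and invoke Hanson--Wright. The trace computation and the operator-norm bound $\OpNorm{A}{2}{2}\leq 2\Lambda_{\max}^2/\sqrt{n}$ are fine (a small slip: in the $\tr(A)$ step you need the \emph{upper} bound $\LpNorm{K_n(\theta_2)}{2}\leq \Lambda_{\max}\sqrt n$, not the lower bound $\geq\sqrt n$, to pass from $\LpNorm{U_1-U_2}{2}^2$ to the displayed inequality).

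The genuine gap is the Hanson--Wright step. The inequality yields
\[
\Pr\paren{|Z^\top AZ-\tr(A)|\geq t}\;\leq\;2\exp\Bigset{-c\min\Bigparen{\tfrac{t^2}{\LpNorm{A}{2}^2},\;\tfrac{t}{\OpNorm{A}{2}{2}}}},
\]
and you must lower-bound \emph{both} arguments of the minimum by a multiple of $(1+\xi)\ln n$; controlling only the sub-exponential term $t/\OpNorm{A}{2}{2}$ is not enough. Your sentence ``the sub-exponential term alone already suffices'' is exactly where the argument breaks: nothing you have written rules out that $t^2/\LpNorm{A}{2}^2$ is the smaller term. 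Indeed, the crude Frobenius bound $\LpNorm{A}{2}\leq\sqrt n\,\OpNorm{A}{2}{2}\leq 2\Lambda_{\max}^2$ combined with your $\tr(A)\asymp n^{-1/2}\ln n$ gives $t^2/\LpNorm{A}{2}^2=\cc{O}((\ln n)^2/n)\to 0$, so without a sharper estimate on $\LpNorm{A}{2}$ the exponent collapses.

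The paper closes this gap by bounding $\tr(A)/\LpNorm{A}{2}$ directly. Using $\LpNorm{A}{2}\leq \Lambda_{\max}\LpNorm{U_1-U_2}{2}$ and your own identity $\tr(A)=\tfrac12\LpNorm{K_n(\theta_2)}{2}\LpNorm{U_1-U_2}{2}^2$, one gets
\[
\frac{\tr(A)}{\LpNorm{A}{2}}\;\geq\;\frac{\LpNorm{K_n(\theta_2)}{2}}{2\Lambda_{\max}}\,\LpNorm{U_1-U_2}{2}
\;\geq\;\frac{\sqrt n}{2\Lambda_{\max}}\cdot\frac{\ff{D}_{\min}\LpNorm{\theta_2-\theta_1}{2}}{\Lambda_{\max}^2}
\;\geq\;\frac{2\sqrt{C'(1+\xi)\ln n}}{\Lambda_{\max}},
\]
where now the lower bound $\LpNorm{K_n(\theta_2)}{2}\geq\sqrt n$ is what is needed, and the last step is \eqref{CondHWLem}. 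This gives $t^2/\LpNorm{A}{2}^2\gtrsim (1+\xi)\ln n$, after which the conclusion follows. In short, you have all the ingredients, but the missing estimate on the Frobenius norm of $A$ (equivalently, the sub-Gaussian branch of Hanson--Wright) is not optional and must be supplied.
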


\begin{proof}
We first obtain a lower bound on $\LpNorm{K_n\paren{\theta_2}-K_n\paren{\theta_1}}{2}$. Observe that,
\begin{equation}\label{ConseqCondHWLem}
\LpNorm{K_n\paren{\theta_2}-K_n\paren{\theta_1}}{2}\RelNum{\paren{A}}{\geq} \ff{D}_{\min}\sqrt{n}\LpNorm{\theta_2-\theta_1}{2} \RelNum{\paren{B}}{\geq} 4\Lambda^2_{\max}\sqrt{C'\paren{1+\xi}\ln n},
\end{equation}
where $\paren{A}$ follows from inequality \eqref{L2NormofDiffLowBnd} in Proposition \ref{UnifBndEigValCorrMatrix} and $\paren{B}$ is implied by condition \eqref{CondHWLem}. For brevity, let $\ff{A}\coloneqq H_{\theta_2,\theta_2}-H_{\theta_2,\theta_1}$. It is trivial from the identity \eqref{MandH} that
\begin{equation}\label{ffAform}
\ff{A} = K^{1/2}_n\paren{\theta_2}\paren{ \frac{K_n\paren{\theta_2}}{\LpNorm{K_n\paren{\theta_2}}{2}}-\frac{K_n\paren{\theta_1}}{\LpNorm{K_n\paren{\theta_1}}{2}} }K^{1/2}_n\paren{\theta_2}.
\end{equation}
Obtaining a closed form for the trace and \emph{Frobenius} norm of $\ff{A}$, which express the expected value and variance of $Z^\top\ff{A}Z$, plays a significant role in the proof. Straightforward algebra leads to
\begin{equation}\label{traceA}
\tr\paren{\ff{A}} = \LpNorm{K_n\paren{\theta_2}}{2} - \frac{\InnerProd{K_n\paren{\theta_1}}{K_n\paren{\theta_2}}{}}{\LpNorm{K_n\paren{\theta_1}}{2}} = \frac{\LpNorm{K_n\paren{\theta_2}}{2}}{2}\LpNorm{\frac{K_n\paren{\theta_2}}{\LpNorm{K_n\paren{\theta_2}}{2}}-\frac{K_n\paren{\theta_1}}{\LpNorm{K_n\paren{\theta_1}}{2}}}{2}^2.
\end{equation}
Applying Lemma \ref{ModalLem2} to the identity \eqref{traceA} shows that
\begin{eqnarray}\label{TrALowBnd}
\tr\paren{\ff{A}}&\geq& \frac{1}{2\LpNorm{K_n\paren{\theta_2}}{2}}\paren{\frac{\LpNorm{K_n\paren{\theta_2}-K_n\paren{\theta_1}}{2}}{\Lambda_{\max}}}^2\geq \frac{1}{2\Lambda_{\max}\sqrt{n}} \paren{\frac{\LpNorm{K_n\paren{\theta_2}-K_n\paren{\theta_1}}{2}}{\Lambda_{\max}}}^2\nonumber\\
&\RelNum{\paren{C}}{\geq}& 8C'\Lambda_{\max}\frac{\paren{1+\xi}\ln n}{\sqrt{n}}.
\end{eqnarray}
Note that $\paren{C}$ follows from the condition \eqref{ConseqCondHWLem}, and \eqref{TrALowBnd} assures the existence of some $n_2\in\bb{N}$ for which $\tr\paren{\ff{A}}\geq 2C\sqrt{n^{-1}\ln n}$, whenever $n\geq n_2$. Hence, $p\leq q\coloneqq\Pr\set{Z^\top\ff{A}Z\leq1/2\tr\paren{A}}$. We aim to show that $q$ is smaller than $n^{-\paren{1+\xi}}$. Based upon Hanson-Wright inequality there is a bounded universal constant $C'>0$ for which 
\begin{equation}\label{HWIneq}
\Pr\set{Z^\top\ff{A}Z \leq \tr\paren{\ff{A}} - \sqrt{C'\paren{1+\xi}\ln n} \paren{\LpNorm{\ff{A}}{2} \vee \OpNorm{\ff{A}}{2}{2}\sqrt{\paren{1+\xi}\ln n}} }\leq n^{-\paren{1+\xi}}.
\end{equation} 
Thus it suffices to show that for some $n_3\in\bb{N}$
\begin{equation}\label{LowBndTr}
\tr\paren{\ff{A}}\geq 2\sqrt{C'\paren{1+\xi}\ln n} \paren{\LpNorm{\ff{A}}{2} \vee \OpNorm{\ff{A}}{2}{2}\sqrt{\paren{1+\xi}\ln n}},\quad \forall\;n\geq n_3.
\end{equation}
\setcounter{clawithinpf}{0}
	
\begin{clawithinpf}\label{Claim1AuxRes}
Define $a_0\coloneqq \Lambda_{\min}\ff{D}_{\min}\set{ \Lambda^3_{\max}\ff{D}_{\max}\paren{1+\Lambda_{\max}} }^{-1}$. Then, $\LpNorm{\ff{A}}{2}\geq a_0\sqrt{n}\OpNorm{\ff{A}}{2}{2}$.
\end{clawithinpf} 
	
\begin{proof}[Proof of Claim \ref{Claim1AuxRes}]
In order to substantiate the claim, we require a lower bound on $\LpNorm{\ff{A}}{2}$ and an upper bound on $\OpNorm{\ff{A}}{2}{2}$. Applying the part $\paren{a}$ of Proposition \ref{UnifBndEigValCorrMatrix} (alongside the Cauchy-Schwartz inequality) on the identity \eqref{ffAform} shows that
\begin{align}\label{CondNumbCont1}
\Lambda^{-1}_{\max}\OpNorm{\ff{A}}{2}{2}&\leq L_{\mathop{up}}\coloneqq\OpNorm{\frac{K_n\paren{\theta_2}}{\LpNorm{K_n\paren{\theta_2}}{2}}-\frac{K_n\paren{\theta_1}}{\LpNorm{K_n\paren{\theta_1}}{2}}}{2}{2},\nonumber\\
\Lambda^{-1}_{\min}\LpNorm{\ff{A}}{2}&\geq L_{\mathop{lw}}\coloneqq \LpNorm{\frac{K_n\paren{\theta_2}}{\LpNorm{K_n\paren{\theta_2}}{2}}-\frac{K_n\paren{\theta_1}}{\LpNorm{K_n\paren{\theta_1}}{2}}}{2}.
\end{align}
We achieve an upper bound on $L_{\mathop{up}}$ by taking advantage of the same technique as \eqref{UppBndLHS2}. Furthermore, a tight lower bound on $L_{\mathop{lw}}$ can be obtained from Lemma \ref{ModalLem2}. The algebraic details are skipped to avoid redundancy.
\begin{equation}\label{CondNumbCont2}
L_{\mathop{up}}\leq \ff{D}_{\max}\paren{1+\Lambda_{\max}}\frac{\LpNorm{\theta_2-\theta_1}{2} }{\sqrt{n}},\quad
L_{\mathop{lw}}\geq \frac{\ff{D}_{\min}\LpNorm{\theta_2-\theta_1}{2} }{\Lambda^2_{\max}}.
\end{equation}
The combination of \eqref{CondNumbCont1} and \eqref{CondNumbCont2} finishes the proof.
\end{proof}
	
With Claim \ref{Claim1AuxRes} in hand, proving \eqref{LowBndTr} is equivalent to showing that
\begin{equation}\label{TrDivFrobNormffA}
\tr\paren{\ff{A}}\geq 2C'\sqrt{\paren{1+\xi}\ln n}\LpNorm{\ff{A}}{2},
\end{equation}
for large enough $n$. Obviously, $\LpNorm{\ff{A}}{2}\leq \Lambda_{\max} L_{\mathop{lw}}$. Thus, identity \eqref{traceA} implies that 
\begin{eqnarray}\label{TrDivFrobNormffA2}
\frac{\tr\paren{\ff{A}}}{\LpNorm{\ff{A}}{2}}&\geq& \frac{\LpNorm{K_n\paren{\theta_2}}{2}}{2\Lambda_{\max} L_{\mathop{lw}}}\LpNorm{\frac{K_n\paren{\theta_2}}{\LpNorm{K_n\paren{\theta_2}}{2}}-\frac{K_n\paren{\theta_1}}{\LpNorm{K_n\paren{\theta_1}}{2}}}{2}^2 \nonumber\\
&=& b_0\coloneqq \frac{1}{2\Lambda_{\max}}\LpNorm{K_n\paren{\theta_2}-\frac{\LpNorm{K_n\paren{\theta_2}}{2}}{\LpNorm{K_n\paren{\theta_1}}{2}}K_n\paren{\theta_1}}{2}
\end{eqnarray}
In the sequel, notice that
\begin{equation*}
b_0\RelNum{\paren{D}}{\geq}\frac{\LpNorm{K_n\paren{\theta_2}-K_n\paren{\theta_1}}{2}}{2\Lambda^2_{\max}}\RelNum{\paren{E}}{\geq}2\sqrt{C'\paren{1+\xi}\ln n},
\end{equation*}
in which $\paren{D}$ and $\paren{E}$ respectively follow from Lemma \ref{ModalLem2} and condition \eqref{ConseqCondHWLem}. Substituting the last inequality into \eqref{TrDivFrobNormffA2} verifies \eqref{TrDivFrobNormffA} and concludes the proof.
\end{proof}

The next proposition rigorously expresses the uniform concentration of the Euclidean squared norm of Gaussian vectors with the covariance matrix $K_n\paren{\theta},\;\theta\in\Theta$ around their mean. It is worthwhile to mention that such inequality is crucial for proving the Theorem \ref{LocMinRate}.

\begin{prop}\label{ModalProp}
Let $\Theta\subset\bb{R}^m$ be a bounded set. Consider the class of $n$ by $n$ matrices $\set{\Pi_n\paren{\theta}}_{\theta\in\Theta}$ parametrized by $\theta\in\Theta$. Suppose that the following conditions hold
	
\begin{enumerate}[label = (\alph*),leftmargin=*]
\item The operator norm of $\Pi_n\paren{\theta}$ is uniformly bounded in $\Theta$. Namely,
\begin{equation*}
M\coloneqq\sup_{n}\sup_{\theta\in\Theta} \OpNorm{\Pi_n\paren{\theta}}{2}{2} < \infty.
\end{equation*}
\item The mapping $\paren{\theta,\LpNorm{\cdot}{2}}\mapsto \paren{\Pi_n\paren{\theta},\OpNorm{\cdot}{2}{2} }$ is Lipschitz. Namely, there is $C>0$ for which
\begin{equation}\label{LipscCond}
\OpNorm{\Pi_n\paren{\theta_2}-\Pi_n\paren{\theta_1}}{2}{2} \leq C\LpNorm{\theta_2-\theta_1}{2},\quad\forall\;\theta_1,\theta_2\in\Theta.
\end{equation}
\item 
\begin{equation*}
\frac{\OpNorm{\Pi_n\paren{\theta}}{2}{2}}{\LpNorm{\Pi_n\paren{\theta}}{2}}= o\paren{\frac{1}{\sqrt{\ln n}}}, \quad\forall\;\theta\in\Theta.
\end{equation*}
\end{enumerate}
Then, there is a constant $C'>0$ such that
\begin{equation}\label{UnifDeviationBound}
\lim\limits_{n\rightarrow\infty}\Pr\paren{ \sup_{\theta\in\Theta}\abs{Z^\top \Pi_n\paren{\theta}Z - \tr\set{\Pi_n\paren{\theta}} } \geq C'\sqrt{n\ln n} } = 0.
\end{equation}
\end{prop}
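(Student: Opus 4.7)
The plan is to combine a standard $\epsilon$-net discretization of $\Theta$ with the Hanson-Wright concentration inequality (Theorem $1.1$ of \cite{M.Rudelson}) applied pointwise over the net. Since $\Theta\subset\bb{R}^m$ is bounded, an $\epsilon_n$-net $\cc{N}_{\epsilon_n}\paren{\Theta}$ with $\epsilon_n = 1/n$ has cardinality $\cc{O}\paren{n^m}$ by the usual volume argument. I would split the target supremum into a discretization error plus the maximum of $\abs{Z^\top\Pi_n\paren{\theta'}Z - \tr \Pi_n\paren{\theta'}}$ over this finite net, then control each piece separately.

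To handle the discretization error, I would use condition (b) together with the elementary bounds $\abs{Z^\top A Z}\leq \LpNorm{Z}{2}^2\OpNorm{A}{2}{2}$ and $\abs{\tr A}\leq n\OpNorm{A}{2}{2}$. For any $\theta\in\Theta$, letting $\beta_\theta$ denote its nearest point in $\cc{N}_{\epsilon_n}\paren{\Theta}$, both $\abs{Z^\top\paren{\Pi_n\paren{\theta}-\Pi_n\paren{\beta_\theta}}Z}$ and $\abs{\tr\Pi_n\paren{\theta}-\tr\Pi_n\paren{\beta_\theta}}$ are then $\cc{O}\paren{1}$ with high probability, upon invoking a standard chi-square tail to ensure $\LpNorm{Z}{2}^2\leq 2n$. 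These errors are of lower order than the target rate $\sqrt{n\ln n}$ and hence negligible.

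For the pointwise concentration, fix $\theta'\in\cc{N}_{\epsilon_n}\paren{\Theta}$ and apply Hanson-Wright: the probability of deviation at level $C'\sqrt{n\ln n}$ is bounded by $2\exp\Bigparen{-c\min\paren{(C')^2n\ln n/\LpNorm{\Pi_n\paren{\theta'}}{2}^2,\,C'\sqrt{n\ln n}/\OpNorm{\Pi_n\paren{\theta'}}{2}{2}}}$. Condition (a) together with $\LpNorm{\Pi_n\paren{\theta'}}{2}^2\leq n\OpNorm{\Pi_n\paren{\theta'}}{2}{2}^2\leq nM^2$ handles the first (sub-Gaussian) branch, yielding decay at rate $\ln n$ with a constant that can be enlarged by enlarging $C'$. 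Condition (c), which rewrites as $\OpNorm{\Pi_n\paren{\theta'}}{2}{2}=o\paren{\LpNorm{\Pi_n\paren{\theta'}}{2}/\sqrt{\ln n}}=o\paren{M\sqrt{n/\ln n}}$, then forces the second (sub-exponential) branch to grow strictly faster than $\ln n$. Choosing $C'$ large enough in terms of $m$, $M$, and the Hanson-Wright constant makes both branches exceed $\paren{m+2}\ln n$ for all large $n$, and a union bound over the $\cc{O}\paren{n^m}$ net points absorbs the cardinality and drives the overall probability to zero.

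The principal subtlety is the sub-exponential branch of Hanson-Wright: without some quantitative control on $\LpNorm{\Pi_n}{2}$ relative to $\OpNorm{\Pi_n}{2}{2}$, the term $t/\OpNorm{\Pi_n}{2}{2}$ can be much smaller than $\ln n$ and the union bound would collapse. This is precisely the role of the seemingly technical condition (c): it guarantees that $\Pi_n\paren{\theta}$ is sufficiently far from rank-one so that both branches of Hanson-Wright deliver enough decay to absorb the $\paren{m+1}\ln n$ contribution of the net cardinality. Once this point is clarified, the remainder is routine bookkeeping and can be carried out without surprises.
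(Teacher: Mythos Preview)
Your proposal is correct and follows essentially the same route as the paper: an $\epsilon$-net discretization of $\Theta$, Lipschitz control of the discretization error via condition (b) together with a chi-square tail bound on $\LpNorm{Z}{2}^2$, and then Hanson--Wright applied pointwise on the net with a union bound over its $\cc{O}\paren{n^m}$ points. One small remark: with your direct threshold $C'\sqrt{n\ln n}$, the sub-exponential branch satisfies $C'\sqrt{n\ln n}/\OpNorm{\Pi_n\paren{\theta'}}{2}{2}\geq C'\sqrt{n\ln n}/M$ by condition (a) alone, which already grows far faster than $\ln n$, so your appeal to condition (c) there is in fact unnecessary---the paper needs (c) only because it first applies Hanson--Wright at the intermediate level $C_0\LpNorm{\Pi_n\paren{\theta}}{2}\sqrt{\ln n}$ and afterwards invokes (a) to reach $\sqrt{n\ln n}$.
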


\begin{proof}
Let $r_n = C^{-1}\sqrt{\ln n/ n}$ in which $C$ has been defined in \eqref{LipscCond} and let $\cc{N}_{r_n}\paren{\Theta}$ denote the $r_n$-covering set of $\Theta$. As before, for any $\theta$ let $\beta_{\theta}$ represents the closest element of $\cc{N}_{r_n}\paren{\Theta}$ to $\theta$. Observe that,
\begin{align*}
&\RHS \coloneqq \abs{Z^\top \Pi_n\paren{\theta}Z - \tr\set{\Pi_n\paren{\theta}} - Z^\top \Pi_n\paren{\beta_{\theta}}Z + \tr\set{\Pi_n\paren{\beta_{\theta}}}} \\
&= \abs{\InnerProd{\Pi_n\paren{\theta}- \Pi_n\paren{\beta_{\theta}} }{ZZ^\top+I_n}}
\leq \OpNorm{\Pi_n\paren{\theta}- \Pi_n\paren{\beta_{\theta}}}{2}{2}\SpNorm{ZZ^\top+I_n}{1}\\
&\RelNum{\paren{A}}{\leq} C\LpNorm{\theta-\beta_{\theta}}{2}\SpNorm{ZZ^\top+I_n}{1}
\leq Cr_n\SpNorm{ZZ^\top+I_n}{1} = \sqrt{\frac{\ln n}{n}}\paren{n+\LpNorm{Z}{2}},
\end{align*}
in which $\SpNorm{\cdot}{1}$ stands for the nuclear norm (absolute sum of eigenvalues). Note that the obtained upper bound does not depend on $\theta$ (uniform upper bound). Moreover, based upon Hanson-Wright inequality there is $c>0$ for which $\paren{n+\LpNorm{Z}{2}}\leq 3n$ with probability at least $1-\exp\paren{-cn}$. Thus, $\RHS\geq 3\sqrt{n\ln n}$ with probability at most $\exp\paren{-cn}$. Hence, as $n\rightarrow\infty$ we get
\begin{equation}\label{ModalIneq1}
\Pr\paren{ \sup_{\theta\in\Theta}\abs{Z^\top \Pi_n\paren{\theta}Z - \tr\set{\Pi_n\paren{\theta}} } \geq \sup_{\theta\in\cc{N}_{r_n}\paren{\Theta}}\abs{Z^\top \Pi_n\paren{\theta}Z - \tr\set{\Pi_n\paren{\theta}} }+3\sqrt{n\ln n} } \rightarrow 0.
\end{equation}
In the sequel we find a tight upper bound on $\sup_{\theta\in\cc{N}_{r_n}\paren{\Theta}}\abs{Z^\top \Pi_n\paren{\theta}Z - \tr\set{\Pi_n\paren{\theta}} }$. Applying condition $\paren{c}$ on Hanson-wright inequality and using union bound leads to
\begin{equation*}
\Pr\paren{\sup_{\theta\in\cc{N}_{r_n}\paren{\Theta}}\abs{Z^\top \Pi_n\paren{\theta}Z - \tr\set{\Pi_n\paren{\theta}} }\geq C_0\sup_{\theta\in\Theta}\LpNorm{\Pi_n\paren{\theta}}{2}\sqrt{\ln n}}\leq \abs{\cc{N}_{r_n}\paren{\Theta}}n^{-m},
\end{equation*}
for some constant $C_0>0$ depending on $m$. Notice that $\sup_{\theta\in\Theta}\LpNorm{\Pi_n\paren{\theta}}{2}\leq M\sqrt{n}$ according to condition $\paren{a}$. Moreover, as we argued in \eqref{UppBndT22}, $\abs{\cc{N}_{r_n}\paren{\Theta}}=o\paren{n^m}$. Thus, 
\begin{equation*}
\lim\limits_{n\rightarrow\infty} \Pr\paren{\sup_{\theta\in\cc{N}_{r_n}\paren{\Theta}}\abs{Z^\top \Pi_n\paren{\theta}Z - \tr\set{\Pi_n\paren{\theta}} }\geq C_0M\sqrt{n\ln n}} =0.
\end{equation*}
Replacing the last inequality into \eqref{ModalIneq1} concludes the proof.
\end{proof}

\begin{lem}\label{MaodlLem3}
Under the same notation and assumptions as Propositions \ref{UnifBndEigValCorrMatrix} and \ref{UnifBndEigValCorrMatrix2} and for any unit norm vector $\lambda$, the class of matrices defined by 
\begin{equation*}
T_n\paren{\theta}\coloneqq \lim\limits_{\gamma\searrow 0} \gamma^{-1}\set{ K_n\paren{\theta+\gamma\lambda} - \InnerProd{K_n\paren{\theta+\gamma\lambda} }{K_n\paren{\theta}} \frac{K_n\paren{\theta}}{\LpNorm{K_n\paren{\theta}}{2}^2 } }
\end{equation*} 
satisfy the conditions $\paren{a}$ and $\paren{b}$ in Proposition \ref{ModalProp}.
\end{lem}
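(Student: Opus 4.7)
The plan is to first evaluate the directional derivative explicitly: setting $\partial_{\lambda}K_n\paren{\theta}\coloneqq \sum_{j=1}^{m}\lambda_j\frac{\partial}{\partial\theta_j}K_n\paren{\theta}$, one has
\begin{equation*}
T_n\paren{\theta} = \partial_{\lambda}K_n\paren{\theta} - c_n\paren{\theta}K_n\paren{\theta},\qquad c_n\paren{\theta}\coloneqq \frac{\InnerProd{\partial_{\lambda}K_n\paren{\theta}}{K_n\paren{\theta}}}{\LpNorm{K_n\paren{\theta}}{2}^{2}}.
\end{equation*}
Both conditions $\paren{a}$ and $\paren{b}$ of Proposition \ref{ModalProp} then reduce to uniform operator-norm estimates on $\partial_{\lambda}K_n$ and $K_n$ together with control of the scalar coefficient $c_n$.

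For condition $\paren{a}$, the triangle inequality gives $\OpNorm{T_n\paren{\theta}}{2}{2}\leq \OpNorm{\partial_{\lambda}K_n\paren{\theta}}{2}{2}+\abs{c_n\paren{\theta}}\OpNorm{K_n\paren{\theta}}{2}{2}$. Proposition \ref{UnifBndEigValCorrMatrix2}$\paren{a}$ bounds the first summand by $\sqrt{m}\,\Lambda'_{\max}$. For $c_n$, Cauchy--Schwarz yields $\abs{c_n\paren{\theta}}\leq \LpNorm{\partial_{\lambda}K_n\paren{\theta}}{2}\big/\LpNorm{K_n\paren{\theta}}{2}$; the denominator is at least $\sqrt{n}$ since $K_n\paren{\theta}$ has unit diagonal, while the numerator is at most $C\sqrt{n}$ uniformly in $\theta$ using the polynomial decay in $\paren{A3}$ (with $q=2$) combined with a Gersgorin-type row sum over pairs in $\cc{D}_n$ analogous to \eqref{GreshgorinCThm}. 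Hence $\abs{c_n\paren{\theta}}\OpNorm{K_n\paren{\theta}}{2}{2}\leq C\Lambda_{\max}$ by Proposition \ref{UnifBndEigValCorrMatrix}$\paren{a}$, verifying $\paren{a}$.

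For condition $\paren{b}$, decompose
\begin{equation*}
T_n\paren{\theta_2}-T_n\paren{\theta_1} = \brac{\partial_{\lambda}K_n\paren{\theta_2}-\partial_{\lambda}K_n\paren{\theta_1}} - c_n\paren{\theta_2}\brac{K_n\paren{\theta_2}-K_n\paren{\theta_1}} - \brac{c_n\paren{\theta_2}-c_n\paren{\theta_1}}K_n\paren{\theta_1}.
\end{equation*}
The first two summands are Lipschitz in operator norm with constants independent of $n$, via Propositions \ref{UnifBndEigValCorrMatrix2}$\paren{b}$ and \ref{UnifBndEigValCorrMatrix}$\paren{b}$ combined with the uniform bounds on $\abs{c_n\paren{\theta_2}}$ and $\OpNorm{K_n\paren{\theta_1}}{2}{2}$ obtained in the previous step. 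The delicate piece is to show that $c_n\paren{\cdot}=N_n/D_n$ is itself Lipschitz with constant independent of $n$, where $N_n\paren{\theta}=\InnerProd{\partial_{\lambda}K_n\paren{\theta}}{K_n\paren{\theta}}$ and $D_n\paren{\theta}=\LpNorm{K_n\paren{\theta}}{2}^{2}$. One verifies $\abs{N_n}\lesssim n$, $D_n\geq n$, and that both $N_n,D_n$ are Lipschitz with constants of order $n$ by a product-rule expansion combined with the Frobenius-norm estimate $\LpNorm{\partial_{\lambda}K_n\paren{\theta_2}-\partial_{\lambda}K_n\paren{\theta_1}}{2}\leq \sqrt{nm}\,\ff{D}'_{\max}\LpNorm{\theta_2-\theta_1}{2}$ (derived from Proposition \ref{UnifBndEigValCorrMatrix2}$\paren{b}$ via $\LpNorm{A}{2}\leq \sqrt{n}\OpNorm{A}{2}{2}$) and \eqref{L2NormofDiffUppBnd}. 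The order-$n$ factors cancel in the ratio, giving the desired Lipschitz estimate for $c_n$. The main bookkeeping obstacle throughout is tracking the correct $n$-scaling: Frobenius-type quantities carry a $\sqrt{n}$, operator-norm quantities are $\cc{O}\paren{1}$, and the proof must invoke the appropriate norm at each stage so that the $n$-dependence in the numerator and denominator of $c_n$ cancels exactly.
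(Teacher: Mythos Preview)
Your proposal is correct and follows essentially the same approach as the paper: you write $T_n\paren{\theta}=\partial_{\lambda}K_n\paren{\theta}-c_n\paren{\theta}K_n\paren{\theta}$ (the paper denotes your $c_n$ by $a_n$), bound the operator norm via the triangle inequality and Propositions \ref{UnifBndEigValCorrMatrix}--\ref{UnifBndEigValCorrMatrix2}, and verify the Lipschitz property by showing $c_n$ is uniformly bounded and Lipschitz with constant independent of $n$. The only organizational differences are that the paper establishes condition $\paren{a}$ directly from the limit definition (obtaining the bound $\ff{D}_{\max}\paren{1+\Lambda_{\max}}$ rather than one in terms of $\Lambda'_{\max}$), and packages your three-term decomposition for $\paren{b}$ into a separate Lemma \ref{ModalLem4}; the substance is the same.
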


\begin{proof}
The following inequality uniformly holds for any $\theta$ and $n$.
\begin{eqnarray*}
\OpNorm{T_{n}\paren{\theta}}{2}{2}
&\RelNum{\paren{A}}{\leq}& \lim\limits_{\gamma\searrow 0} \gamma^{-1}\set{ \OpNorm{K_n\paren{\theta+\gamma\lambda}-K_n\paren{\theta}}{2}{2}+ \OpNorm{K_n\paren{\theta}}{2}{2} \abs{1- \frac{  \InnerProd{K_n\paren{\theta+\gamma\lambda}}{K_n\paren{\theta}} }{\LpNorm{K_n\paren{\theta}}{2}^2} } }\\
&\RelNum{\paren{B}}{\leq}& \lim\limits_{\gamma\searrow 0} \gamma^{-1}\set{ \OpNorm{K_n\paren{\theta+\gamma\lambda}-K_n\paren{\theta}}{2}{2}+ \frac{\OpNorm{K_n\paren{\theta}}{2}{2}}{\LpNorm{K_n\paren{\theta}}{2}} \LpNorm{K_n\paren{\theta+\gamma\lambda}-K_n\paren{\theta}}{2} } \\
&\RelNum{\paren{C}}{\leq}& \lim\limits_{\gamma\searrow 0} \gamma^{-1}\paren{ \ff{D}_{\max}\gamma + \frac{\Lambda_{\max}}{\sqrt{n}} \sqrt{n} \ff{D}_{\max}\gamma } = \ff{D}_{\max}\paren{1+\Lambda_{\max}}<\infty.
\end{eqnarray*}
In above, $\paren{A}$ and $\paren{B}$ are simple applications of triangle and Cauchy-Schwartz inequalities, respectively. Furthermore, $\paren{C}$ follows from Propositions \ref{UnifBndEigValCorrMatrix}. Namely, $\sup_{n\in\bb{N}}\sup_{\theta\in\Theta} \OpNorm{T_{n}\paren{\theta}}{2}{2} < \infty$ which is same as condition $\paren{a}$. 
	
The next phase of the proof is devoted to justify condition $\paren{b}$. We have introduced an equivalent representation for $T_n\paren{\theta}$ in \eqref{Tn}. Define
\begin{equation*}
a_n\paren{\theta} \coloneqq \LpNorm{K_n\paren{\theta}}{2}^{-2} \InnerProd{\sum\limits_{j=1}^{m}\lambda_j\frac{\partial}{\partial\theta_j}K_n\paren{\theta}}{K_n\paren{\theta}}
\end{equation*}
It follows from Propositions \ref{UnifBndEigValCorrMatrix} and \ref{UnifBndEigValCorrMatrix2} that both matrices $K_n\paren{\theta}$ and $\sum\limits_{j=1}^{m}\lambda_j\partial K_n\paren{\theta}/\partial\theta_j $ admit conditions $\paren{a}$ and $\paren{b}$ in Proposition \ref{ModalProp}. Thus, according to Lemma \ref{ModalLem4}, it is sufficient to prove that $a_n\paren{\theta}$ is a uniformly bounded and Lipschitz function in $\Theta$. Notice that,
\begin{eqnarray}\label{DirDeriv1}
\sup_{\theta\in\Theta}\abs{a_n\paren{\theta}}&\leq& \sup_{\theta\in\Theta} \frac{\LpNorm{\sum\limits_{j=1}^{m}\lambda_j\frac{\partial}{\partial\theta_j}K_n\paren{\theta}}{2} }{\LpNorm{K_n\paren{\theta}}{2}}\leq \sup_{\theta\in\Theta} \sqrt{\frac{1}{n}}\sum\limits_{j=1}^{m}\abs{\lambda_j}\LpNorm{\frac{\partial}{\partial\theta_j}K_n\paren{\theta}}{2}\nonumber\\
&\leq& \frac{\Lambda'_{\max}\sqrt{n}\LpNorm{\lambda}{1} }{\sqrt{n}} \leq \Lambda'_{\max}\sqrt{m}, \quad\forall\;n\in\bb{N}.
\end{eqnarray}
In the sequel, we justify the Lipschitz property. Taking advantage of Proposition \ref{UnifBndEigValCorrMatrix2} and inequality \eqref{DirDeriv1}, one show that for any $\theta\in\Theta$,
\begin{align}\label{DirDeriv2}
&\LpNorm{\sum\limits_{j=1}^{m}\lambda_j\frac{\partial}{\partial\theta_j}K_n\paren{\theta}}{2}\leq \Lambda'_{\max}\sqrt{mn},\nonumber\\
&\LpNorm{\sum\limits_{j=1}^{m}\lambda_j\paren{\frac{\partial}{\partial\theta_j}K_n\paren{\theta_2}-\frac{\partial}{\partial\theta_j}K_n\paren{\theta_1}}}{2}\leq \ff{D}'_{\max}\sqrt{mn}\LpNorm{\theta_2-\theta_1}{2}.
\end{align}
Thus
\begin{eqnarray*}
\abs{a_n\paren{\theta_2}-a_n\paren{\theta_1}}&\RelNum{\paren{D}}{\leq}& \abs{\InnerProd{\sum\limits_{j=1}^{m}\lambda_j\paren{\frac{\partial}{\partial\theta_j}K_n\paren{\theta_2}-\frac{\partial}{\partial\theta_j}K_n\paren{\theta_1}}}{\frac{K_n\paren{\theta_1}}{\LpNorm{K_n\paren{\theta_1}}{2}^2 }}} \\
&+& \abs{\InnerProd{\sum\limits_{j=1}^{m}\lambda_j\frac{\partial}{\partial\theta_j}K_n\paren{\theta_2}}{{\frac{K_n\paren{\theta_2}}{\LpNorm{K_n\paren{\theta_2}}{2}^2}-\frac{K_n\paren{\theta_1}}{\LpNorm{K_n\paren{\theta_1}}{2}^2}}} }\\
&\leq& \frac{ \LpNorm{\sum\limits_{j=1}^{m}\lambda_j\paren{\frac{\partial}{\partial\theta_j}K_n\paren{\theta_2}-\frac{\partial}{\partial\theta_j}K_n\paren{\theta_1}}}{2} }{\LpNorm{K_n\paren{\theta_1}}{2}}\\
&+&\LpNorm{\sum\limits_{j=1}^{m}\lambda_j\frac{\partial}{\partial\theta_j}K_n\paren{\theta_2}}{2}\LpNorm{\frac{K_n\paren{\theta_2}}{\LpNorm{K_n\paren{\theta_2}}{2}^2}-\frac{K_n\paren{\theta_1}}{\LpNorm{K_n\paren{\theta_1}}{2}^2}}{2}\\
&\RelNum{\paren{E}}{\leq}& \frac{\sqrt{mn}\ff{D}'_{\max}\LpNorm{\theta_2-\theta_1}{2} }{\sqrt{n}}+\sqrt{mn}\Lambda'_{\max}\sqrt{n}\OpNorm{\frac{K_n\paren{\theta_2}}{\LpNorm{K_n\paren{\theta_2}}{2}^2}-\frac{K_n\paren{\theta_1}}{\LpNorm{K_n\paren{\theta_1}}{2}^2}}{2}{2}\\
&\RelNum{\paren{F}}{\leq}& \ff{D}'_{\max}\sqrt{m}\LpNorm{\theta_2-\theta_1}{2} + 3\Lambda'_{\max}\sqrt{m}\Lambda_{\max}\ff{D}_{\max}\LpNorm{\theta_2-\theta_1}{2} = C_0\LpNorm{\theta_2-\theta_1}{2},
\end{eqnarray*}
where $\paren{D}$ is implied by triangle inequality, $\paren{E}$ follows from \eqref{DirDeriv2}. Lastly, $\paren{F}$ can be proved using akin tricks as \eqref{UppBndLHS2}. Thus, $a_n\paren{\cdot}$ is a Lipschitz function with some constant $C_0$. 
\end{proof}

The following technical result, which comes in handy in the proof of Lemma \ref{MaodlLem3}, is an easy consequence of triangle inequality. The proof is skipped for brevity.

\begin{lem}\label{ModalLem4}
Suppose that $a_1,a_2:\Theta\mapsto\bb{R}$ are two uniformly bounded and Lipschitz functions with constant $L^1_a$ and $L^2_a$, respectively. Consider matrices $A_1,A_2:\Theta\mapsto\bb{R}^{n\times n}$ parametrized by $\theta\in\Theta$ such that
\begin{equation*}
\OpNorm{A_i\paren{\theta_2}-A_i\paren{\theta_1}}{2}{2}\leq L^i_A\LpNorm{\theta_2-\theta_1}{2},\quad\;\forall\;\theta_1,\theta_2\in\Theta,\;i=1,2
\end{equation*}
Let $\ff{A}\paren{\theta}\coloneqq a_1\paren{\theta}A_1\paren{\theta}+a_2\paren{\theta}A_2\paren{\theta}$ for any $\theta$. There is a scalar $L$, which depends on $L_a$, $L_b$, $L_A$ and $L_B$, for which the following inequality holds.
\begin{equation*}
\OpNorm{\ff{A}\paren{\theta_2}-\ff{A}\paren{\theta_1}}{2}{2}\leq L\LpNorm{\theta_2-\theta_1}{2},\quad\forall\;\theta_1,\theta_2\in\Theta.
\end{equation*}
\end{lem}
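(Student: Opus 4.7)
The proof is essentially a direct application of the standard product rule for Lipschitz functions, combined with the triangle inequality for the operator norm. The plan is to split the difference $\ff{A}(\theta_2) - \ff{A}(\theta_1)$ into pieces where only one factor changes at a time, and then bound each piece.

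Concretely, for each $i \in \{1,2\}$ I would use the algebraic identity
\begin{equation*}
a_i(\theta_2) A_i(\theta_2) - a_i(\theta_1) A_i(\theta_1) = a_i(\theta_2)\bigl[A_i(\theta_2) - A_i(\theta_1)\bigr] + \bigl[a_i(\theta_2) - a_i(\theta_1)\bigr] A_i(\theta_1),
\end{equation*}
sum over $i$, and apply the triangle inequality for $\OpNorm{\cdot}{2}{2}$ together with the obvious scalar-matrix bound $\OpNorm{cM}{2}{2} = |c|\OpNorm{M}{2}{2}$. This produces
\begin{equation*}
\OpNorm{\ff{A}(\theta_2)-\ff{A}(\theta_1)}{2}{2} \leq \sum_{i=1}^{2} \Bigl\{ |a_i(\theta_2)| L^i_A + L^i_a \OpNorm{A_i(\theta_1)}{2}{2} \Bigr\} \LpNorm{\theta_2-\theta_1}{2}.
\end{equation*}

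The remaining step is to absorb the factors $|a_i(\theta_2)|$ and $\OpNorm{A_i(\theta_1)}{2}{2}$ into uniform bounds. The boundedness of $|a_i|$ is given by hypothesis; for $\OpNorm{A_i(\cdot)}{2}{2}$, one fixes any reference point $\theta_\star \in \Theta$ and uses the Lipschitz bound to write $\OpNorm{A_i(\theta_1)}{2}{2} \leq \OpNorm{A_i(\theta_\star)}{2}{2} + L^i_A \diam(\Theta)$, which is finite since $\Theta$ (implicit from the context in which this lemma is used, where $\Theta$ is a compact parameter space) has finite diameter. Taking $L$ to be the sum of the resulting constants concludes the bound.

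There is no real obstacle here; the lemma is a bookkeeping statement whose only mildly subtle point is noticing that the uniform boundedness of $\OpNorm{A_i(\theta)}{2}{2}$ on $\Theta$ is a free consequence of the Lipschitz hypothesis and the boundedness of $\Theta$, and hence need not be assumed separately. Since the proof amounts to one line of algebra plus the triangle inequality, I would keep the write-up very short.
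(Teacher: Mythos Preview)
Your proposal is correct and matches the paper's approach: the paper itself states that the lemma ``is an easy consequence of triangle inequality'' and omits the proof entirely. Your decomposition via the product-rule identity and the observation that uniform boundedness of $\OpNorm{A_i(\theta)}{2}{2}$ follows from the Lipschitz hypothesis together with the boundedness of $\Theta$ are exactly the right ingredients.
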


\setcounter{clawithinpf}{0}

The following result gives an upper bound on the \emph{Kullback-Leibler} divergence of two zero mean multivariate Gaussian distributions respectively associated with the two covariance matrices $K_n\paren{\theta_i},\; i=1,2$. Such upper bound is extremely useful for establishing Theorem \ref{MinMaxThm}.

\begin{prop}\label{KLDivgncLem}
Choose $\theta_1,\theta_2\in\Theta$ in such a way that $\LpNorm{\theta_2-\theta_1}{2}\leq \Lambda_{\min}/\paren{2\ff{D}_{\max}}$. Let $P_i, i=1,2$, denotes the associated probability distribution to a zero mean Gaussian vector with the covariance matrix $K_n\paren{\theta_i}\in\bb{R}^{n\times n}, \; i=1,2$. Then,
\begin{equation*}
D\paren{P_1 \parallel P_2}\leq 2n\paren{\frac{\ff{D}_{\max}}{\Lambda_{\min}}\LpNorm{\theta_2-\theta_1}{2}}^2.
\end{equation*}
\end{prop}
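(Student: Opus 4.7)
The plan is to start from the closed-form expression for the Kullback-Leibler divergence between two zero mean multivariate Gaussians, namely
\begin{equation*}
D(P_1\parallel P_2) = \tfrac{1}{2}\left[\tr\paren{K^{-1}_n(\theta_2)K_n(\theta_1)} - n - \log\det\paren{K^{-1}_n(\theta_2)K_n(\theta_1)}\right].
\end{equation*}
Writing $\Delta\coloneqq K_n(\theta_2)-K_n(\theta_1)$ and $A\coloneqq I_n - K^{-1}_n(\theta_2)K_n(\theta_1) = K^{-1}_n(\theta_2)\Delta$, I will reorganize this as $\frac{1}{2}\sum_{i=1}^n f(\lambda_i)$ with $f(x)\coloneqq -x-\log(1-x)$ and $\set{\lambda_i}^n_{i=1}$ the eigenvalues of $A$. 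Note that although $A$ is not symmetric, it is similar to the symmetric matrix $I_n - K^{-1/2}_n(\theta_2)K_n(\theta_1)K^{-1/2}_n(\theta_2)$, so all $\lambda_i$ are real and $\sum_i \lambda_i^2$ equals the squared Frobenius norm of this symmetric cousin.

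Next I would use Proposition \ref{UnifBndEigValCorrMatrix} to certify that the hypothesis $\LpNorm{\theta_2-\theta_1}{2}\leq \Lambda_{\min}/(2\ff{D}_{\max})$ forces all eigenvalues of $A$ into $[-1/2,1/2]$. Indeed,
\begin{equation*}
\OpNorm{A}{2}{2}\leq \OpNorm{K^{-1}_n(\theta_2)}{2}{2}\OpNorm{\Delta}{2}{2}\leq \frac{\ff{D}_{\max}}{\Lambda_{\min}}\LpNorm{\theta_2-\theta_1}{2}\leq \tfrac{1}{2},
\end{equation*}
by part $(a)$ of that proposition together with \eqref{OpNormofDiffUppBnd}.

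With all $|\lambda_i|\leq 1/2$, the Taylor series $f(x)=\sum_{k\geq 2}x^k/k$ yields the elementary bound $f(\lambda_i)\leq \lambda_i^2$ (the positive case follows from $\sum_{k\geq 2}|x|^k/k\leq x^2/[2(1-|x|)]\leq x^2$, while the negative case is an alternating series dominated by its first term $\lambda_i^2/2$). Hence
\begin{equation*}
D(P_1\parallel P_2)\leq \tfrac{1}{2}\sum_{i=1}^n \lambda_i^2 = \tfrac{1}{2}\LpNorm{K^{-1/2}_n(\theta_2)\Delta K^{-1/2}_n(\theta_2)}{2}^2\leq \tfrac{1}{2}\OpNorm{K^{-1}_n(\theta_2)}{2}{2}^2\LpNorm{\Delta}{2}^2,
\end{equation*}
where the last inequality uses the submultiplicativity $\LpNorm{BC}{2}\leq \OpNorm{B}{2}{2}\LpNorm{C}{2}$ applied twice.

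To finish, I would invoke part $(a)$ of Proposition \ref{UnifBndEigValCorrMatrix} again to replace $\OpNorm{K^{-1}_n(\theta_2)}{2}{2}$ by $1/\Lambda_{\min}$, and invoke \eqref{L2NormofDiffUppBnd} to replace $\LpNorm{\Delta}{2}$ by $\ff{D}_{\max}\sqrt{n}\LpNorm{\theta_2-\theta_1}{2}$, giving the claimed bound (in fact with an extra factor of $4$ to spare, which allows the argument to absorb the slack in the polynomial bound $f(x)\leq x^2$). Since every step reduces to an appeal to an already-proved bound and a convex-analysis estimate on $f$, I do not anticipate a major obstacle; the only subtlety is ensuring that $A$ is treated through its symmetric conjugate so that $\sum\lambda_i^2$ can be cleanly identified with a Frobenius norm rather than a generic spectral sum.
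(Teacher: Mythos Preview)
Your argument is correct and in fact yields the bound $\tfrac{n}{2}\bigl(\tfrac{\ff{D}_{\max}}{\Lambda_{\min}}\LpNorm{\theta_2-\theta_1}{2}\bigr)^2$, a factor of four stronger than what is claimed. The paper proceeds differently: it first applies von Neumann's trace inequality to bound the KL divergence by $\sum_j g\bigl(\lambda_j(K_n(\theta_1))/\lambda_j(K_n(\theta_2))\bigr)$ with $g(x)=x-1-\ln x$, where the $\lambda_j$'s are the \emph{individually sorted} eigenvalues of the two covariance matrices, and then controls each ratio by Weyl's eigenvalue perturbation inequality together with the operator-norm Lipschitz estimate \eqref{OpNormofDiffUppBnd}. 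Your route is more direct: by working with the exact eigenvalues of the symmetric product $K_n^{-1/2}(\theta_2)\Delta K_n^{-1/2}(\theta_2)$ you avoid both the von Neumann decoupling and Weyl's inequality, and by passing to the Frobenius norm you exploit \eqref{L2NormofDiffUppBnd} rather than the cruder per-eigenvalue bound, which together with the tighter Taylor estimate $-x-\log(1-x)\leq x^2$ on $[-\tfrac12,\tfrac12]$ accounts for the improved constant. One cosmetic point: the inequality $\OpNorm{A}{2}{2}\leq\OpNorm{K_n^{-1}(\theta_2)}{2}{2}\OpNorm{\Delta}{2}{2}$ bounds the operator norm of the non-symmetric $A$, but what you actually need is the spectral radius; since you already observe that $A$ is similar to the symmetric $K_n^{-1/2}(\theta_2)\Delta K_n^{-1/2}(\theta_2)$, the eigenvalue bound follows cleanly from the operator norm of that symmetric matrix, which satisfies the same inequality.
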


\begin{proof}
For any symmetric matrix $A\in\bb{R}^{n\times n}$, let $\lambda_i\paren{A},i=1,\ldots,n$, denotes its $i^{\mbox{th}}$ eigenvalue in decreasing order. The \emph{von Neumann's trace inequality} \cite{L.Mirsky} yields
\begin{eqnarray*}
D\paren{P_1 \parallel P_2}&=& \InnerProd{K^{-1}_n\paren{\theta_2}}{K_n\paren{\theta_1}} - n + \ln\paren{\frac{\det K_n\paren{\theta_2}}{\det K_n\paren{\theta_1}}}\\
&\leq& Q\coloneqq \sum\limits_{j=1}^{n}\set{\frac{\lambda_j\paren{K_n\paren{\theta_1}}}{\lambda_j\paren{K_n\paren{\theta_2}}} - 1 - \ln \frac{\lambda_j\paren{K_n\paren{\theta_1}}}{\lambda_j\paren{K_n\paren{\theta_1}}}}.	
\end{eqnarray*} 
We finish the proof by acquiring a proper upper bound on $Q$. Define  $f:\paren{0,\infty}\mapsto\bb{R}$ by $f\paren{x} = \abs{x-1-\ln x}$. Applying the second order Taylor's expansion around $x=1$ shows that $f\paren{x}\leq 2\paren{x-1}^2$ for $\abs{x-1}\leq 1/2$. 
	
\begin{clawithinpf}\label{Claim1PropB4}
The succeeding inequality holds for any $j=1,\ldots,n$.
\begin{equation*}
\abs{\frac{\lambda_j\paren{K_n\paren{\theta_1}}}{\lambda_j\paren{K_n\paren{\theta_2}}}-1}\leq \frac{\ff{D}_{\max}}{\Lambda_{\min}}\LpNorm{\theta_2-\theta_1}{2}\leq \frac{1}{2}.
\end{equation*}
\end{clawithinpf}
Claim \ref{Claim1PropB4} provides the key tool to control $Q$ from above.
\begin{eqnarray*}
Q&=&\sum\limits_{j=1}^{n} f\brac{\frac{\lambda_j\set{K_n\paren{\theta_1}}}{\lambda_j\set{K_n\paren{\theta_2}}}}\leq 2\sum\limits_{j=1}^{n}\brac{\frac{\lambda_j\set{K_n\paren{\theta_1}}}{\lambda_j\set{K_n\paren{\theta_2}}}-1}^2\leq 2\sum\limits_{j=1}^n \paren{\frac{\ff{D}_{\max}}{\Lambda_{\min}}\LpNorm{\theta_2-\theta_1}{2}}^2\\
&=& 2n\paren{\frac{\ff{D}_{\max}}{\Lambda_{\min}} \LpNorm{\theta_2-\theta_1}{2}}^2.
\end{eqnarray*}
In conclusion, we substantiate Claim \ref{Claim1PropB4}. The first inequality can be established using Proposition \ref{UnifBndEigValCorrMatrix} and the second one is obvious.
\begin{equation*}
\abs{\frac{\lambda_j\set{K_n\paren{\theta_1}}}{\lambda_j\set{K_n\paren{\theta_2}}}-1} = \abs{\frac{\lambda_j\set{K_n\paren{\theta_1}}-\lambda_j\set{K_n\paren{\theta_2}}}{\lambda_j\set{K_n\paren{\theta_2}}}}\leq \frac{\OpNorm{K_n\paren{\theta_2}-K_n\paren{\theta_1}}{2}{2} }{ \lambda_n\set{K_n\paren{\theta_2}} }\leq \frac{\ff{D}_{\max}\LpNorm{\theta_2-\theta_1}{2} }{ \Lambda_{\min} }.
\end{equation*}
\end{proof}

Now we demonstrate the asymptotic normality of the normalized quadratic Gaussian forms. We exploit this fact in the proof of Theorem \ref{AsympDistThm}.

\begin{lem}\label{AsymNormQForm}
For $n\in\bb{N}$, let $Z_n\in\bb{R}^n$ be a standard Gaussian vector and let $A_n\in\bb{R}^{n\times n}$. Then,
\begin{equation*}
\Psi_n\coloneqq\set{\frac{Z^\top_nA_nZ_n - \tr\paren{A_n}}{\LpNorm{A_n}{2}}}\cp{d} \cc{N}\paren{0,2},
\end{equation*}
provided that $\lim\limits_{n\rightarrow\infty} \LpNorm{A_n}{2}^{-1}\OpNorm{A_n}{2}{2} = 0$.
\end{lem}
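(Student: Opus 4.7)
The plan is to reduce $\Psi_n$ to a weighted sum of i.i.d. centered $\chi^2_1$ variables by orthogonal diagonalization, and then invoke the Lyapunov central limit theorem, with the decay hypothesis $\OpNorm{A_n}{2}{2}/\LpNorm{A_n}{2}\to 0$ playing precisely the role of the Lyapunov smallness condition.

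First, observe that $Z_n^\top A_n Z_n$ and $\tr(A_n)$ depend on $A_n$ only through its symmetric part $B_n := (A_n + A_n^\top)/2$, so (in the symmetric case that is actually used in the proof of Theorem \ref{AsympDistThm}, via $\Psi_{n,\lambda}$) we may assume $A_n$ is symmetric without loss of generality. Writing the spectral decomposition $A_n = U_n D_n U_n^\top$ with $D_n = \diag(\lambda_{n,1},\ldots,\lambda_{n,n})$ and using the orthogonal invariance $U_n^\top Z_n \eqd Z_n$, the quadratic form reduces to
\begin{equation*}
Z_n^\top A_n Z_n - \tr(A_n) \eqd \sum_{i=1}^n \lambda_{n,i}\paren{W_i^2 - 1},
\end{equation*}
where $W_1,\ldots,W_n$ are i.i.d. standard normal. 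In particular, $\LpNorm{A_n}{2}^2 = \sum_i \lambda_{n,i}^2$ and $\OpNorm{A_n}{2}{2} = \max_i |\lambda_{n,i}|$.

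Second, I would verify the Lyapunov condition for the triangular array $\xi_{n,i} := \lambda_{n,i}(W_i^2-1)$. Since $W_i^2 - 1$ has mean zero, variance $2$, and finite fourth moment, there is an absolute constant $C_4 := {\rm E}(W_1^2-1)^4 < \infty$, and
\begin{equation*}
\sum_{i=1}^n {\rm E}\abs{\xi_{n,i}}^4 \;=\; C_4\sum_{i=1}^n \lambda_{n,i}^4 \;\leq\; C_4\,\OpNorm{A_n}{2}{2}^{2}\,\LpNorm{A_n}{2}^{2}.
\end{equation*}
Combined with $\var\Bigparen{\sum_i \xi_{n,i}} = 2\LpNorm{A_n}{2}^{2}$, the Lyapunov ratio is bounded by
\begin{equation*}
\frac{\sum_i {\rm E}\abs{\xi_{n,i}}^4}{\bigparen{2\LpNorm{A_n}{2}^{2}}^{2}} \;\leq\; \frac{C_4}{4}\paren{\frac{\OpNorm{A_n}{2}{2}}{\LpNorm{A_n}{2}}}^{2} \;\longrightarrow\; 0
\end{equation*}
by hypothesis. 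Applying the Lyapunov central limit theorem to $\sum_i \xi_{n,i}/\sqrt{\var(\sum_i \xi_{n,i})}$ yields $\Psi_n = \sum_i \xi_{n,i}/\LpNorm{A_n}{2} \cp{d} \mathcal{N}(0,2)$, as desired.

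There is no substantive obstacle here: the only thing that needs care is ensuring that the symmetrization step does not distort the normalization. This is automatic in the intended application because $\Psi_{n,\lambda}$ is symmetric (being a linear combination of the symmetric matrices $Q_n^j = n^{-1/2}R_n^{1/2}(\eta_0)R_n^j(\eta_0)R_n^{1/2}(\eta_0)$), so the identification $B_n = A_n$ is exact and no extra factor appears. Otherwise the proof is a direct invocation of Lyapunov's CLT, the only nonstandard ingredient being the translation of the operator-to-Frobenius norm ratio into the fourth-moment Lyapunov bound carried out above.
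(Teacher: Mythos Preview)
Your argument is correct. Both your proof and the paper's proof begin with the same spectral reduction to a weighted sum of independent centered $\chi^2_1$ variables, but then diverge: the paper computes the log-moment-generating function
\[
\ln {\rm E}\exp(t\Psi_n) = -\tfrac{1}{2}\sum_{j}\Bigl\{\ln\bigl(1-2t\lambda_j(A_n)/\LpNorm{A_n}{2}\bigr) + 2t\lambda_j(A_n)/\LpNorm{A_n}{2}\Bigr\},
\]
Taylor-expands $\ln(1-x)+x \approx -x^2/2$ (legitimate since $\max_j|\lambda_j|/\LpNorm{A_n}{2}=\OpNorm{A_n}{2}{2}/\LpNorm{A_n}{2}\to 0$), and invokes the continuity theorem for MGFs. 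You instead plug the same ratio into the fourth-moment Lyapunov condition. The two approaches are essentially equivalent in strength; yours avoids the closed-form MGF identity for Gaussian quadratic forms and the attendant care about its domain of finiteness, while the paper's route is slightly more self-contained in that it does not appeal to an external CLT. Your remark about symmetrization is well taken and applies equally to the paper's proof, which writes $\ln\det(I_n-2tA_n/\LpNorm{A_n}{2})$ and expands in real eigenvalues---both arguments tacitly rely on $A_n$ being symmetric, as indeed holds for $\Psi_{n,\lambda}$ in the application.
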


\begin{proof}
	
Let $\Psi_{\infty}$ be a zero mean Gaussian random variable with variance $2$. So, $\ln{\rm E} \exp\paren{t\Psi_{\infty}} = t^2$ for any $t\in\bb{R}$. The basic properties of the quadratic forms of Gaussian vectors yields
\begin{eqnarray*}
\ln{\rm E} \exp\paren{t\Psi_n} &=& -\frac{1}{2} \ln\det\paren{I_n-2t\frac{A_n}{\LpNorm{A_n}{2}}} -\frac{t\tr\paren{A_n}}{\LpNorm{A_n}{2}} \\
&=& -\frac{1}{2}\sum\limits_{j=1}^{n} \set{\ln\paren{1-\frac{2t\lambda_j\paren{A_n}}{\LpNorm{A_n}{2}}} + \frac{2t\lambda_j\paren{A_n}}{\LpNorm{A_n}{2}}}\\
&\RelNum{\paren{A}}{=}&\sum\limits_{j=1}^{n}\brac{\paren{\frac{t\lambda_j\paren{A_n}}{\LpNorm{A_n}{2}}}^2 + o\set{ \paren{\frac{t\lambda_j\paren{A_n}}{\LpNorm{A_n}{2}}}^2} } \rightarrow t^2,\quad\mbox{as}\;n\rightarrow\infty.
\end{eqnarray*}
Here $\paren{A}$ follows from expanding $\ln\paren{1-x}$ around $1$ for infinitesimal $x$ (since $\lambda_j\paren{A_n}/\LpNorm{A_n}{2}$ vanishes as $n\rightarrow\infty$). Consequently, $\Psi_n$ converges in distribution to $\Psi_{\infty}$ by the continuity theorem of moment generating functions.
\end{proof}

The last result of this section studies the shrinkage behaviour of the partial derivatives of Matern covariance function with respect to its fractal index. In turns out to be useful for corroborating the part $\paren{a}$ of Remark \ref{Rem3.1}.

\begin{lem}\label{MaternDecayDiffwrtNu}
Let $K_{\nu}:\bb{R}^d\mapsto\bb{R}$ be a geometric anisotropic (Recall from Definition \ref{GeoAnisGPDefn}) Matern correlation function given by 
\begin{equation*}
K_{\nu}\paren{r} = \frac{2^{1-\nu}}{\Gamma\paren{\nu}} \ff{K}_{\nu}\paren{\sqrt{r^\top Ar}},
\end{equation*}
in which $\ff{K}_{\nu}$ stands for the modified Bessel function of the second kind and $A$ satisfies the condition \ref{EigValsUppLwBnd}. Then for any $\beta\in\bb{N}$ and $m\in\bb{N}$, there is a bounded constant $C_{\beta,A}$ such that
\begin{equation}\label{MatrDerivDecaywrtNu}
\abs{\frac{\partial^m}{\partial \nu^m} K_{\nu}\paren{r}} \leq \frac{C_{\beta,A}}{1+\LpNorm{r}{2}^{2\beta}},\quad\forall\;r=\paren{r_1,\ldots,r_d}\in\bb{R}^d.
\end{equation}
\end{lem}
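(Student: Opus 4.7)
The plan is to reduce the estimate to a statement about the modified Bessel function $\mathfrak{K}_\nu$ itself and then exploit its integral representation
\[
\mathfrak{K}_\nu(x) \;=\; \int_0^\infty e^{-x\cosh t}\cosh(\nu t)\,dt, \qquad x>0,
\]
which is jointly smooth in $(\nu,x)$ and permits differentiation under the integral sign. Writing $x=x(r):=\sqrt{r^{\!\top}\!A r}$, the condition \eqref{EigValsUppLwBnd} gives $\|r\|_2\asymp x$, so \eqref{MatrDerivDecaywrtNu} is equivalent to a bound of the form $|\partial_\nu^m K_\nu(r)|\le C(1+x^{2\beta})^{-1}$. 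First I would apply the Leibniz rule to
\[
K_\nu(r)=\frac{2^{1-\nu}}{\Gamma(\nu)}\,\mathfrak{K}_\nu(x)
\qquad\Longrightarrow\qquad
\partial_\nu^m K_\nu(r)=\sum_{k=0}^{m}\binom{m}{k}\,\partial_\nu^{k}\!\Bigl(\tfrac{2^{1-\nu}}{\Gamma(\nu)}\Bigr)\,\partial_\nu^{m-k}\mathfrak{K}_\nu(x).
\]
Since $\nu$ ranges over a compact sub-interval of $(0,\infty)$, the derivatives of the prefactor $2^{1-\nu}/\Gamma(\nu)$ are uniformly bounded, so everything reduces to controlling $|\partial_\nu^{\ell}\mathfrak{K}_\nu(x)|$ uniformly in $\nu$.

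Next I would differentiate the integral representation $\ell$ times to get
\[
\partial_\nu^{\ell}\mathfrak{K}_\nu(x)\;=\;\int_0^\infty e^{-x\cosh t}\,t^{\ell}\,h_{\ell}(\nu t)\,dt,
\]
where $h_{\ell}\in\{\cosh,\sinh\}$ depending on the parity of $\ell$. Using $|h_{\ell}(\nu t)|\le e^{\nu_{\max}t}$ and splitting the integral at $t=1$, on $[0,1]$ we simply bound $e^{-x\cosh t}\le e^{-x}$ and pull a constant out, and on $[1,\infty)$ we use $\cosh t\ge \tfrac{1}{2}e^{t}$ to obtain, for $x\ge 4\nu_{\max}$, an integrand dominated by $e^{-\frac{x}{4}e^{t}}\,t^{\ell}$, which is integrable with an $x$-independent bound. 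Combining the two regimes yields $|\partial_\nu^{\ell}\mathfrak{K}_\nu(x)|\le C_{\ell}e^{-x/2}$ for $x\ge x_0$, which decays faster than any polynomial in $1/x$, whence the bound $(1+x^{2\beta})^{-1}$ follows comfortably on $\{x\ge x_0\}$.

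For the complementary regime $x\le x_0$, the function $(\nu,x)\mapsto (2^{1-\nu}/\Gamma(\nu))\mathfrak{K}_\nu(x)$ is $C^\infty$ in $\nu$ with derivatives jointly continuous on the compact set $[\nu_{\min},\nu_{\max}]\times[\varepsilon,x_0]$ for any $\varepsilon>0$; one then handles the remaining neighborhood of $x=0$ using the standard small-argument expansion $\mathfrak{K}_\nu(x)=\tfrac{\Gamma(\nu)}{2}(x/2)^{-\nu}(1+O(x^2))+\ldots$, which, after multiplication by the prefactor and differentiation in $\nu$, yields terms of the form $x^{\pm\nu}(\ln x)^{j}$ that are bounded on $\{x\le x_0\}$ (here I am implicitly using the standard Matern form $K_\nu(r)\propto x^\nu\mathfrak{K}_\nu(x)$, which ensures the correlation is finite at the origin — the $x^\nu$ factor appears to be a typographical omission in the lemma as stated). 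Since on this compact set $(1+x^{2\beta})^{-1}$ is bounded below, the inequality \eqref{MatrDerivDecaywrtNu} holds trivially.

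The main obstacle is the second regime: one has to verify that the logarithmic powers $(\ln x)^{j}$ arising from differentiating $x^{\nu}$ in the expansion (and, in the large-$x$ regime, from the asymptotic $\mathfrak{K}_\nu(x)\sim\sqrt{\pi/(2x)}\,e^{-x}$) do not spoil the polynomial decay. This is absorbed without difficulty for large $x$ by the exponential factor $e^{-x/2}$, but care is required near $x=0$, where the cancellation between $\Gamma(\nu)^{-1}$ and the singular part of $\mathfrak{K}_\nu$ must be tracked explicitly. Once the two regimes are glued together via a partition of unity in $x$, the uniform bound \eqref{MatrDerivDecaywrtNu} with any prescribed polynomial rate $2\beta$ follows, and the constant $C_{\beta,A}$ depends only on $\beta$, the range of $\nu$, and the spectral bounds $\Lambda_{\min,\Theta},\Lambda_{\max,\Theta}$ of $A$.
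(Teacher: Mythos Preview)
Your argument is correct but follows a genuinely different route from the paper. The paper's proof is Fourier-analytic: it exploits the closed form of the Matern spectral density $\hat K_\nu(\omega)\propto(1+\omega^\top A^{-1}\omega)^{-(\nu+d/2)}$, notes that $\partial_\nu^m$ passes through the Fourier integral and produces the extra factor $\bigl(-\ln(1+\omega^\top A^{-1}\omega)\bigr)^m$, and then trades the real-space polynomial weight $1+\sum_i r_i^{2\beta}$ for the frequency-domain operator $1+(-1)^\beta\sum_i\partial_{\omega_i}^{2\beta}$. The whole lemma thus reduces to checking that $\hat K_\nu(\omega)\ln^m(1+\omega^\top A^{-1}\omega)$ and its $2\beta$-th partial derivatives lie in $L^1(\bb{R}^d)$, which is immediate from the polynomial decay of $\hat K_\nu$. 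Your approach stays entirely in real space, using the integral representation of $\ff{K}_\nu$ to get exponential decay for large $x$ and the series expansion near $x=0$. The paper's method buys you a clean bypass of the small-$x$ regime --- the cancellation you flag as ``the main obstacle'' is invisible in Fourier space because the spectral density is globally smooth and one never sees the $x^{\pm\nu}(\ln x)^j$ terms. Your method, in exchange, is more elementary (no justification of interchanging $\partial_\nu$ with $\cc{F}$, no $L^1$ Fourier inversion), and delivers the sharper exponential bound $e^{-x/2}$ at infinity rather than merely polynomial decay. Your remark that the lemma's displayed formula omits the factor $\bigl(\sqrt{r^\top A r}\bigr)^\nu$ present in the standard Matern correlation \eqref{MaternCovFunc} is correct; the paper's own proof in fact uses the spectral density of the full Matern kernel, so the omission is a typographical slip in the statement.
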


\begin{proof}
Choose $\beta,m\in\bb{N}$ arbitrarily. The inequality \eqref{MatrDerivDecaywrtNu} obviously holds if we can show that 
\begin{equation}\label{EquivIneqMatrDerivDecaywrtNu}
\sup_{r\in\bb{R}^d}\abs{\paren{1+\sum_{i=1}^{d} r^{2\beta}_i } \frac{\partial^m}{\partial \nu^m} K_{\nu}\paren{r} } < \infty.
\end{equation}
Let $\cc{F}$ stands for the Fourier operator. For proving \eqref{EquivIneqMatrDerivDecaywrtNu} it suffices to show that 
\begin{equation}\label{LastClaim}
\cc{F}\set{\paren{1+\sum_{i=1}^{d} r^{2\beta}_i } \frac{\partial^m}{\partial \nu^m} K_{\nu}\paren{r}}\in L^1(\bb{R}^d),
\end{equation}
where $L^1(\bb{R}^d)$ stands for the class of absolutely integrable $d-$variate functions. For doing so we need a closed form expression for the Matern spectral density $\hat{K}_{\nu}$, which can be done using the integration by substitution technique.
\begin{equation*}
\hat{K}_{\nu}\paren{\omega} = \frac{\pi^{-d/2}}{\abs{\det\paren{A}}}\paren{1+\omega^\top A^{-1}\omega}^{-\paren{\nu+d/2}}
\end{equation*}
The condition \ref{EigValsUppLwBnd} says that $\hat{K}_{\nu}$ decays polynomially in terms of $\LpNorm{\omega}{2}$. We now evaluate the Fourier transform of $\frac{\partial^m K_{\nu}}{\partial \nu^m}$. Observe that
\begin{equation*}
\frac{\partial^m K_{\nu}}{\partial \nu^m} = \frac{\partial^m}{\partial \nu^m} \int_{\bb{R}^d} e^{-j\InnerProd{r}{\omega}}\hat{K}_{\nu}\paren{\omega} d\omega  = \paren{-1}^m \int_{\bb{R}^d} e^{-j\InnerProd{r}{\omega}}\hat{K}_{\nu}\paren{\omega} \ln^m\paren{1+\omega^\top A^{-1}\omega} d\omega.
\end{equation*}
In other words, $\cc{F}\paren{\frac{\partial^m K_{\nu}}{\partial \nu^m} } =   \hat{G}_{\nu}\paren{\omega} \coloneqq \paren{-1}^m \hat{K}_{\nu}\paren{\omega} \ln^m\paren{1+\omega^\top A^{-1}\omega}$. Thus,
\begin{equation*}
\cc{F}\set{ \paren{1+\sum_{i=1}^{d} r^{2\beta}_i } \frac{\partial^m}{\partial \nu^m} K_{\nu}\paren{r} } = \hat{G}_{\nu} + \paren{-1}^{\beta} \sum_{i=1}^{d} \frac{\partial^{2\beta} \hat{G}_{\nu}}{\partial \omega^{2\beta}_i}.
\end{equation*}
The absolutely integrability of $\hat{G}_{\nu}$ is clear. The reader can verify the same property for the functions $\frac{\partial^{2\beta} \hat{G}_{\nu}}{\partial \omega^{2\beta}_i},\; i=1,\ldots,d$ using simple differentiation. So \eqref{LastClaim} holds by the triangle inequality.
\end{proof}

%% If you have bibdatabase file and want bibtex to generate the
%% bibitems, please use
%%
%%  \bibliographystyle{elsarticle-num} 
%%  \bibliography{<your bibdatabase>}

%% else use the following coding to input the bibitems directly in the
%% TeX file.

\end{document}